\documentclass[reqno,a4paper, 12pt]{amsart}

\usepackage{bm}
\usepackage{amsmath}
\usepackage{amsfonts}
\usepackage{amssymb}
\usepackage{amsthm}
\usepackage{graphicx}
\usepackage{color}
\usepackage{enumitem}
\usepackage{url}
\usepackage{a4wide}
\theoremstyle{plain}

\newtheorem{theorem}{Theorem}[section]
\newtheorem{corollary}[theorem]{Corollary}
\newtheorem{definition}[theorem]{Definition}
\newtheorem{example}[theorem]{Example}
\newtheorem{lemma}[theorem]{Lemma}

\newtheorem{proposition}[theorem]{Proposition}
\newtheorem{remark}[theorem]{Remark}

\newtheorem{conjecture}[theorem]{Conjecture}

\numberwithin{equation}{section}
\newcommand{\tr}{\mathop{\mathrm{tr}}}

\newcommand{\diag}{\mathop{\mathrm{diag}}}
\newcommand{\rank}{\mathop{\mathrm{rank}}}

\newcommand{\signum}{\mathop{\mathrm{sgn}}}

\usepackage{hyperref}

\sloppy
\numberwithin{equation}{section}
%%%%%%%%%%%%%%%%%%%%%%%%%%%%%%%%%%%%%%%%%%%%%%%%%%%%%%%%%%%%%%%%%%%%%%

\begin{document}

\begin {center}
      CIMPA WORKSHOP\\
   Analytical and algebraic tools of modern multivariate analysis and graphical models \\
   {\it Outils analytiques et algebraiques modernes\\
   de la statistique multivariate et des modules graphiques}\\
 2011, Hammamet,  Tunisia\\[40mm]

{\LARGE \bf STOCHASTIC
 ANALYSIS METHODS\\ IN WISHART THEORY\\[10mm]}

  {\Large
  by\\[1mm]
  Piotr Graczyk}\\[1mm]

  {\large LAREMA, Universit\'e d'Angers, France}\\[2mm]

  { \Large and\\[1mm]

   Eberhard Mayerhofer}\\[1mm]

 {\large Deutsche Bundesbank, Frankfurt am Main, Germany}\\[20mm]
  \end{center}
 %%%%%%%%%%%%%%%%%%%%%%%%%%%%%%%%%%%%%%%%%%%%%%%%%%%%%%%%%%%%%%%%%%%%%%%%%%%%%%%%%%%%%%
  {\large {\bf  Part I.   Yamada-Watanabe Theorem for Matrix Stochastic Differential Equations.
  Moments of Wishart Processes. {\it By P. Graczyk} }\\[2mm]
 %%%%%%%%%%%%%%%%%%%%%%%%%%%%%%%%%%%%%%%%%%%%%%%%%%%%%%%%%%%%%%%%%%%%%%%%%%%%%%%%%%%%%%%%
    {\large {\bf  Part II.   Wishart Processes and  Wishart Distributions: an Affine Processes Point of View. }  {\it By E. Mayerhofer} }}\\[2mm]

\vfill\eject
%%%%%%%%%%%%%%%%%%%%%%%%%%%%%%%%%%%%%%%%%%%%%%%%%%%%%%%%%%%%%%%%%%%%%%%%%%%%%%%%%%%%%%%%%%%%%%%
\begin{center}
 {\large {\bf  Part II\\[2mm]
 WISHART PROCESSES AND WISHART DISTRIBUTIONS: AN AFFINE PROCESSES POINT OF VIEW\\[2mm]
  {\it by E. Mayerhofer} }}\\[10mm]
  \end{center}
\tableofcontents
\section{Introduction}\label{sec: intro}In his 1928 Biometrika contribution \cite{Wishart}, Wishart introduced the distribution of covariance matrices of samples from a normally distributed random variable. His contribution triggered a lot of research on theory and application of these and other related multivariate distributions in e.g., multivariate statistics, probability theory and most recently in finance and financial mathematics (for an account of the
literature, see \cite{CFMT} and the references therein). An important subclass of Wishart distributions arise as pushforwards of normal distributions under certain quadratic forms: Let $\xi_1,\xi_2,\dots,\xi_k$ be a sequence of $\mathbb R^d$--valued independent, normally distributed random variables
with mean vectors $\mu_i\in\mathbb R^d$ and covariance matrix $\Sigma$. Then
\[
\Xi:=\xi_1\xi_1^\top+\dots+\xi_k \xi_k^\top
\]
is Wishart distributed  with scale parameter $p:=k/2$, shape parameter $\sigma:=2\Sigma$ and parameter of non-centrality $\omega:=\sum_{i=1}^k \mu_i \mu_i^\top$. We use the
notation $\Gamma(p,\omega;\sigma)$ from \cite{MayerhoferExistenceNonCentral} for the distribution of $\Xi$ which in turn is motivated by Letac and Massam's \cite{Letac08} family of
Wishart distributions. Note that $\Xi$ is positive semidefinite almost surely, and also
$\omega$ and $\sigma$ are positive semidefinite matrices. Considering first the one-dimensional case $d=1$ and assuming the non-trivial case $\Sigma\neq 0$, we can calculate the Laplace transform (or quite similiarly the characterteristic function) of $\xi_j\xi_j^\top=\xi_j^2$, by mere completion of a square,
\begin{align*}
\mathbb E[e^{-u\xi_j^2}]&=\frac{1}{\sqrt{2\pi \Sigma}}\int_{\mathbb R}e^{-u\eta^2-\frac{(\eta-\mu_j)^2}{2\Sigma}} d\eta =\\
&=\frac{1}{\sqrt{2\pi \Sigma}}\int_{\mathbb R}\exp\left(-\frac{1+2\Sigma u}{2\Sigma}\left(\eta-\frac{\mu_j}{1+2\Sigma u}\right)^2
-\frac{1}{2\Sigma}\mu_j^2\left(1-\frac{1}{1+2\Sigma u}\right)\right)d\eta\\
&=\frac{1}{\sqrt{1+2\pi\Sigma}}\underbrace{\frac{1}{\sqrt{2\pi \frac{\Sigma}{1+2\Sigma u}}}\int_{\mathbb R}\exp\left(-\frac{1+2\Sigma u}{2\Sigma}\left(\eta-\frac{\mu_j}{1+2\Sigma u}\right)^2\right)d\eta}_{\textrm{ integral over density of a normal distribution} =1 } \times e^{-u\mu_j^2 /(1+2\Sigma u)}\\
&=\frac{e^{-u (1+2\Sigma u)^{-1} \mu_j^2}}{(1+2\Sigma u)^{1/2}},\quad u\geq 0.
\end{align*}
Here $\mathbb E[\cdot]$ denotes the expectation operator on the respective probability space which supports the random variables $\xi_j$. By the independence of $\xi_j$, ($j=1,\dots,k$), we obtain
\[
\mathbb E[e^{-u\Xi}]=\prod_{j=1} ^k \mathbb E[e^{-u\xi_j^2}]=\frac{e^{-u (1+\sigma u)^{-1}\omega}}{(1+\sigma u)^{p}},\quad u\geq 0.
\]
We see that the family of distributions $\Gamma(p,\omega;\sigma)$  is a natural extension of non-central chi-square distributions on the one hand, where
 $\Sigma=1$,  and of gamma distributions on the other hand, where $\omega=0$.
Let now $d>1$. In the following we denote by $\overline{S}_d^+$
the cone of positive semidefinite matrices, by $\tr(A)$ the trace of a $d\times d$ matrix $A$, and by $\det(A)$ the matrix determinant. The positive definite matrices are abbreviated as $S_d^+$.

With this notation,
a similar calculation as above (taking into account the non-\\ commutativity of the matrix multiplication) yields the formula for the Laplace transform
\begin{equation}\label{eq: discrete wishart dist}
\mathbb E[e^{-\tr(u\Xi)}]=\frac{e^{-\tr(u (I+\sigma u)^{-1}\omega)}}{\det(I+\sigma u)^{p}},\quad u\in \overline S_d^+.
\end{equation}
Here $I$ denotes the unit $d\times d$ matrix, $ab$ denotes the matrix product of matrices $a,b$ and $a^{-1}$ is the inverse of a non-degenerate matrix $a$.

It is well known that chi-square distributions and gamma distributions exist for all shape parameters $p\geq 0$. It therefore may be conjectured that the same holds true in dimensions $d\geq 2$. However, this is
not the case. A number of authors from different scientific communities (see the references given in \cite{PeddadaRichardsPriority}) proved that for invertible $\sigma$, the central Wishart distributions $\Gamma(p;\sigma):=\Gamma (p,\omega=0;\sigma)$ exist if and only if $p$ belongs to the Gindikin ensemble, which equals the set
\begin{equation*}
\Lambda_d:=\{0, 1/2,\dots, (d-1)/2\}\cup (\frac{d-1}{2},\infty).
\end{equation*}
In other words, the right side of eq.~\eqref{eq: discrete wishart dist} is the Laplace transform of a distribution on $\overline S_d^+$ if and only if  $p\in\Lambda_d$.

Note that for $d\geq 3$ the set consists of a discrete part and a continuous part. Of course, $\Gamma(p=0;\sigma)$ is trivial (the unit mass at the origin). Some authors therefore exclude $0$ from the Gindikin set. The non-central case $\omega\neq 0$ is more complicated, but also more interesting.
Peddada and Richards \cite{PeddadaRichards91} show by using technically complicated but elementary arguments involving special functions that if $\rank(\omega)=1$, then $p\in\Lambda_d$. The general  case (arbitrary rank) has been understood completely very recently. While \cite[Proposition 2.3]{Letac08} conjectures the same characterization holds for non-central Wishart distributions, the author of the present note has shown in \cite{MayerhoferExistenceNonCentral}  that for $p<\frac{d-1}{2}$, also the parameter of non-centrality must be of lower rank, namely $\rank(\omega)\leq 2p+1$ (see Theorem \ref{th: maintheorem} below). Furthermore, a preliminary version of that paper, \cite{MayerhoferExistenceOld}, conjectured that in this case $\rank(\omega)\leq 2p$ (subsequently it turned out that
the method I use only implies the weaker rank condition $\rank(\omega)\leq 2p+1$). Very recently, Letac and Massam \cite{Letac11}  confirm my conjecture, while they falsify theirs (see Theorem \ref{th: letac11}).

There is a dynamic way to generate noncentral chi-square distributions. Namely, by taking a $k$--dimensional standard Brownian motion $(B^1_t,\dots, B^k_t)^\top$
and some initial value $y=(y_i)_{i=1}^k\in\mathbb R^k$, we see that the non-negative stochastic process $X_t:=(y+\sqrt{\Sigma} B_t)^\top (y+\sqrt{\Sigma} B_t)$ is distributed according to
$\Gamma(k/2, x; 2\Sigma t)$, with initial value $X_0=x=y^\top y\geq 0$. This follows from the fact that $y_i+\sqrt{\Sigma} B^i_t$ are independent, normally distributed random variables
with mean $y_i$ and variance $\Sigma t$. Processes constructed this way are termed ``
Square Bessel Processes'', and it can be shown that they are well defined
also for any non-negative parameter $p\geq 0$. For $\Sigma=1$ and $\delta=2p$, Pitman and Yor \cite{PitmanYor} denote this class as $W_{\delta}^x$.

The matrix-variate generalization of these Square Bessel Processes are the so-called Wishart processes introduced by Bru \cite{bru}. Their crucial feature is the affine property: Their Laplace transform is exponentially affine in the initial state, $X_0=x$. A modern way of looking at Wishart processes is by considering them as a subclass of affine processes
on positive semi-definite matrices or subsets thereof, while the traditional way originating from Bru and followed by others is of solving certain stochastic differential equations (in these notes they will be called Wishart SDEs). These lecture notes try to explain the connections between the two viewpoints. See also section \ref{subsec: wish semimartin}.

We also discuss the existence of Lebesgue densities for Wishart distributions as well as the existence of transition densities for Wishart processes. Final remarks are on the existence of Wishart processes on state-spaces different from the positive semi-definite matrices.

\section{Wishart semimartingales, Wishart distributions and Wishart processes}\label{sec: relations}
In this section we introduce and comment on the three main objects of this article: {\bf Wishart semimartingale}, their marginal distributions, which are {\bf Wishart distributions}, and the{ \bf Wishart processes}, which in these notes
are Markov processes having so-called {\bf Wishart transition laws}.  We will show that Wishart semimartingales can be realized as solutions to {\bf Wishart SDEs}, and that Wishart processes are actually Wishart semimartingales.
\subsection*{Wishart semimartingales}\label{subsec: wish semimartin}
\begin{definition}
Let $(\Omega,\mathcal F,\mathcal F_t,\mathbb P)$ be a filtered probability space satisfying the usual conditions. Let $p\geq 0$, $\beta$  be a $d\times d$ matrix and let
further $\alpha\in \overline S_d^+$. A continuous semimartingale $X_t$
is called Wishart semimartingale with parameters $(\alpha, p, \beta)$, if we can write $X_t=x+D_t+M_t$, where $X_0=x$, $D_t=\int_0^t (2p \alpha+\beta X_s+X_s\beta^\top)ds$
and $M_t$ is a local martingale with quadratic variation
\begin{equation}\label{Wish q var}
[M_{t,ij}, M_{t,kl}]=\int_0^t\left((X_s)_{ik}\alpha_{jl}+(X_s)_{il}\alpha_{jk}+(X_s)_{jk}\alpha_{il}+(X_s)_{jl}\alpha_{ik}\right)ds,
\end{equation}
\end{definition}
It follows immediately that $M_t$ is continuous with $M_0=0$ a.s., and $D_0=0$ a.s., as well. An important class of Wishart semimartingales are those obtained by certain squares of
Ornstein-Uhlenbeck processes. These correspond to drift parameters $2p\in\mathbb N$:
\begin{example}\label{wish 1 const}
Let $p\in \mathbb N/2$, $m:=2p$, $\alpha\in \overline S_d^+$ and $\beta$ a $d\times d$ matrix.  Choose a $d\times d$ matrix $Q$ for which $Q^\top Q=\alpha$ (there are, in general,
arbitrary many choices for $Q$). For $i=1,\dots,m$ we define
\[
Y_{i,t}:=e^{\beta t}\left(y_i+\int_0^te^{-\beta s}Q^\top dW_s^i\right), \quad t\geq 0,
\]
where $W=(W^1,\dots, W^m)$ is a $d\times m$ standard Brownian motion, and $W^i$ is an $d$-dimensional column vector of standard Brownian motions, and $y_i\in \mathbb R^d$.

Then $Y_i$ is a Gaussian process for every $i=1,\dots,m$. In fact, $Y_{i}$ is an OU-process solving the stochastic
differential equation
\[
dY_{i,t}=\beta Y_{i,t}+Q^\top dW_t^i,\quad Y_{i,0}=y_i.
\]
We define the continuous semimartingale $X_t:=\sum_{i=1}^m Y_{i,t} Y_{i,t}^\top$. Then $X_t$ starts at $X_0=\sum_{i=1}^m y_i y_i^\top$, and we have that
\[
dX_t=(2p Q^\top Q+\beta X_t+X_t\beta^\top)dt+dM_t,
\]
where $M_t$ consists of Brownian terms only. A straightforward calculation yields that $M_t$ has quadratic variation \eqref{Wish q var}, where we have to define $\alpha=Q^\top Q$. Hence
$X_t$ is a Wishart semimartingale with parameters $(\alpha, p, \beta)$.
\end{example}

\begin{example}
The following is a particular case of the preceding example ($Q=I, \,\beta=0$), but written in matrix form.
Let $W$ be a $d\times n$ matrix valued Brownian motion, where $n\geq d$. That is, the entries of $W$ consist of $d\times n$ independent standard Brownian motions. Let further $x=yy^\top$. Then, as can be calculated using It\^o-calculus, the process
$X_t:=(y+W)(y+W)^\top$ is an $\overline S_d^+$-valued Wishart semimartingale with $2p=n$,
$\alpha=I$, $\beta=0$, starting at $X_0=x$.
\end{example}
The following note aims at those readers, who are already accustomed to Wishart processes in the sense of Bru:
\begin{remark}\rm
It is not so trivial to write $X_t$ as solution of a Wishart SDEs, which are defined below in equation eq.~\eqref{wishart full class}. The main technical problem is
to derive from $W$ and $Y$ a new Brownian motion $B$, for which $X$ satisfies  the stochastic differential equation \eqref{wishart full class}. For $2p\geq d+1$,
Pfaffel \cite[Theorem 4.19]{pfaffelProjekt} succeeds by using L\'evy's characterization of Brownian motion. For $2p<d+1$ one can show this by an appropriate enlargement of the underlying probability space.
See statement and proof of Lemma \ref{wish sde is wishart semi}. This technical problem supports our decision to introduce the notion of Wishart semimartingale through these notes.
A further and independent motivation is coming from the recent affine processes literature, where the notion of affine semimartingale appears \cite{Kallsen}. In our case, the affine character
of Wishart semimartingales is reflected by the instantaneous drift $dD_t/dt$ and the instantaneous quadratic variation $dM_t/d_t$, which are both affine function in the state $X_t$.
The second and important affine character of Wishart semimartingales is constituted by their exponentially affine Laplace transform, see Lemma \ref{lem wish semi is wish dist}
below.
\end{remark}

\subsection*{Wishart semimartingales are solutions to Wishart SDEs}
We now relate this class of semimartingales to solutions of certain stochastic differential equations (SDEs).

Let $\sqrt A$ denote the unique square root of $A\in\overline S_d^+$. Let $Q,\beta$ be real valued $d\times d$ matrices and
$p\geq 0$. As {\bf Wishart SDE} we define the stochastic differential equation
 \begin{equation}\label{wishart full class}
dX_t=\sqrt {X_t} dB_t Q+Q^\top dB_t^\top \sqrt {X_t}+(2p\,Q^\top Q+\beta X_t+X_t\beta^\top )dt,\quad X_0=x\in \overline S_d^+,
\end{equation}
where $B$ is a standard $d\times d$ Brownian motion.

We understand any solution of \eqref{wishart full class} as {\it weak solution}, which means that for given $Q$, $\beta$ and $p$, there exists a filtered probability space $(\Omega,\mathcal F,\mathcal F_t,\mathbb P)$ which supports
a pair of $\mathcal F_t$ adapted processes $(X, B)$, which satisfy the It\^o-integral equation \eqref{wishart full class} (which, as is common, is written in differentials). However, if the probability space as well as $B$
are given in advance, then any solution of \eqref{wishart full class} is  called a {\it strong} one. While every strong solution yields a weak solution, the converse does not hold in general. An example of a stochastic differential equation (SDE) which admits weak\footnote{Let $X=B$ be a standard Brownian motion, then  $dW_t=\signum(B_t) dB_t$ is a new Brownian motion in virtue of L\'evy's characterization of Brownian motion, and we have $dX_t=\signum(X_t)dW_t$ and clearly $X_0=B_0=0$ because $B$ is standard.} but not strong solution is Tanaka's one-dimensional equation
\cite[Example 5.3.2]{Oksendal}
\[
dY_t=\signum(Y_t)dW_t,\quad Y_0=0.
\]

While the symmetrization in equation \eqref{wishart full class} is necessary to guarantee a stochastic evolution
on the space of symmetric $d\times d$ matrices, the existence of solutions for \eqref{wishart full class} is far from trivial.
In fact, it is not quite straightforward to ensure that for positive semidefinite starting values $X_0=x\in \overline S_d^+$,
a local solution exists: If we assume $x\in S_d^+$, then a solution exists at least for an almost surely strictly positive stopping time $T_x$. This is the first hitting time of the boundary, for $X$. The solution is a strong one, and its existence follows  from the fact that the matrix square root is locally Lipschitz
on $S_d^+$. However, in general, the interval $[0, T_x]$ is purely stochastic, i.e., we might have $\inf_{\omega\in \Omega} \{T_x(\omega)\mid T_x(\omega)>0\}=0$.  This definitely happens when $p<\frac{d-1}{2}$ and under the premise that $Q$ is non-degenerate, see Lemma \ref{lem definite hit} and Lemma \ref{stronger lem definite hit}. For $p\geq \frac{d+1}{2}$, it has been shown in \cite{pfaffel} that $T_x=\infty$ almost surely. In our terminology this means in particular that for each $(\alpha,\beta, p)$ with $p\geq \frac{d-1}{2}$ a Wishart semimartingale exists. When $p\in [\frac{d-1}{2},\frac{d+1}{2})$ and in special cases, particularly when $\beta=0$ and $Q$ is invertible, Graczyk and Malecki \cite{GraczykMalecki} provide the existence of strong solutions with different methods than this note (see also the first part of these lecture notes written
by P. Graczyk).

We allow for starting values $x\in \overline S_d^+$, hence in particular we allow that the process $X_t$ starts at the boundary $\partial \overline S_d^+$ of $\overline S_d^+$
which are precisely the positive semidefinite matrices with rank strictly smaller than $d$. As the square root is not Lipschitz on $\partial \overline S_d^+$,
standard existence results do not apply. We will see however in a moment, how to infer weak solutions from the existence of Wishart semimartingales:

\begin{lemma}\label{wish sde is wishart semi}
Any solution of the Wishart SDE \eqref{wishart full class} is a Wishart semimartingale. Conversely, suppose $X_t$ is a Wishart semimartingale. Then there exists an enlargement of
$(\Omega, \mathcal F,\mathcal F_t,\mathbb P)$ which supports a $d\times d$ Brownian motion and some $d\times d$ matrix $Q$ for which $Q^\top Q=\alpha$ such that $X_t$ is a solution of the Wishart SDE \eqref{wishart full class}.
\end{lemma}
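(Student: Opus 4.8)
The plan is to prove the two implications separately: the forward one by a direct It\^o computation, and the converse by a martingale representation argument that is forced to pass through an enlargement of the probability space. For the forward direction, suppose $(X,B)$ solves the Wishart SDE \eqref{wishart full class}. Its finite-variation part is visibly $D_t=\int_0^t(2p\,Q^\top Q+\beta X_s+X_s\beta^\top)\,ds$, so setting $\alpha:=Q^\top Q$ the drift already matches the one prescribed in the definition of a Wishart semimartingale. It then remains to check that the local martingale part $M_t=\int_0^t\bigl(\sqrt{X_s}\,dB_s\,Q+Q^\top\,dB_s^\top\sqrt{X_s}\bigr)$ has the quadratic variation \eqref{Wish q var}. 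I would write $dM_{ij}$ in coordinates, use that the entries of $B$ are independent standard Brownian motions so that $d[B_{kl},B_{pq}]=\delta_{kp}\delta_{lq}\,dt$, and simplify with the symmetry of $\sqrt{X_s}$ and the identity $\sqrt{X_s}\,\sqrt{X_s}=X_s$. A short calculation then reproduces exactly the four-term expression in \eqref{Wish q var} with $\alpha=Q^\top Q$, showing $X$ is a Wishart semimartingale; this direction is routine and parallels Example \ref{wish 1 const}.

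For the converse, let $X_t=x+D_t+M_t$ be a Wishart semimartingale and fix any $Q$ with $Q^\top Q=\alpha$, for instance $Q=\sqrt{\alpha}$. I would introduce the state-dependent linear map $\Phi_{X_s}\colon\mathbb R^{d\times d}\to\Sym_d$ sending $G\mapsto\sqrt{X_s}\,G\,Q+Q^\top G^\top\sqrt{X_s}$, and regard it as the diffusion coefficient I wish to realize. The computation of the forward direction, read in reverse, shows precisely that $\Phi_{X_s}\Phi_{X_s}^\ast$ equals the quadratic-variation operator $a(X_s)$ given by the right-hand side of \eqref{Wish q var}; in other words, the prescribed quadratic variation of $M$ already factors through $\Phi_{X_s}$. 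The goal is thus to produce a $d\times d$ Brownian motion $B$ for which $dM_t=\Phi_{X_t}(dB_t)$. Since $\Phi_{X_s}\Phi_{X_s}^\ast=a(X_s)$, I would invoke the standard representation theorem for continuous local martingales with a prescribed (possibly degenerate) quadratic variation, in the form due to Ikeda–Watanabe or Karatzas–Shreve: it furnishes, on a suitably enlarged filtered probability space carrying an auxiliary independent Brownian motion, a Brownian motion $B$ with $M_t=\int_0^t\Phi_{X_s}(dB_s)=\int_0^t\bigl(\sqrt{X_s}\,dB_s\,Q+Q^\top\,dB_s^\top\sqrt{X_s}\bigr)$. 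Adding back the drift $D_t$ gives exactly \eqref{wishart full class}.

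The crux is the degeneracy of $\sqrt{X_s}$. On the boundary $\partial\overline S_d^+$ the map $\Phi_{X_s}$ fails to be surjective, so $B$ cannot be recovered pathwise by inverting the relation $dM_t=\Phi_{X_t}(dB_t)$ on the original space; this is the exact reason the enlargement is unavoidable, and it is why the converse must be stated up to enlargement rather than as a strong statement on $(\Omega,\mathcal F,\mathcal F_t,\mathbb P)$. On the set where $X_s\in S_d^+$ the map is invertible and $B$ is determined by $M$, whereas on the degenerate set the missing directions in the kernel of $\Phi_{X_s}$ are filled in by the auxiliary Brownian motion. Carrying out this bookkeeping carefully — and verifying that the resulting $B$ is genuinely a Brownian motion via L\'evy's characterization — is the only technically delicate point, and it is precisely the step that the earlier remark flags as nontrivial for $2p<d+1$.
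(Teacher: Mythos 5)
Your proposal is correct and takes essentially the same route as the paper: the forward direction is the same coordinate-wise It\^o computation of the quadratic variation, and your converse---factoring the quadratic-variation operator as $\Phi_{X_s}\Phi_{X_s}^\ast$ and invoking the representation theorem for continuous local martingales on an enlarged space carrying an auxiliary independent Brownian motion---is precisely the argument the paper carries out via \cite[Theorem V.20.1 and Lemma V.20.7]{rogerswilliams} (and \cite{CFMT}), illustrating it explicitly only in the case $d=1$. Your explanation of why the enlargement is unavoidable (degeneracy of the diffusion map on $\partial \overline S_d^+$) matches the paper's construction, where the auxiliary Brownian motion fills in the missing directions on the set $\{X_s=0\}$ and L\'evy's characterization certifies the result.
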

\begin{proof}
Clearly, the solution of the Wishart SDE is an It\^o-process with instantaneous drift $b(X_t)=(2p\,Q^\top Q+\beta X_t+X_t\beta^\top )$. By definition, $X_t$ is the sum of a local martingale (the Brownian terms) and a process of finite variation (the integrated drift) plus initial value, $X_t=x+D_t+M_t$.
Furthermore, writing out the Brownian terms of \eqref{wishart full class} in coordinates (and using Einstein's summation convention, where summation is performed over all indices which appear twice), we have
\[
dM_{t,ij}=(\sqrt {X_t})_{ir}dB_{t,rs}Q_{sj}+Q_{ri}d B_{t,sr}(\sqrt{X_t})_{sj}.
\]
Hence, using the formal rules $d[B_{t,ab},B_{t,cd}]=0$ if $(a,b)\neq (c,d)$ and $d[B_{t,ab},B_{t,cd}]=dt$
if $(a,b)=(c,d)$, we have
\[
d[X_{t,ij} X_{t,kl}]=d[M_{t,ij}, M_{t,kl}]=\left((X_t)_{ik}\alpha_{jl}+(X_t)_{il}\alpha_{jk}+(X_t)_{jk}\alpha_{il}+(X_t)_{jl}\alpha_{ik}\right)dt,
\]
where we have set $\alpha=Q^\top Q$.  Hence we see that $X_t$ is a Wishart semimartingale.

The converse direction is proved in full generality in \cite{CFMT}. To avoid technicalities (which only arise in view of the multivariate character of the problem), and
to see the essence of the problem, we just consider the case $d=1$ here. This is also in some way a prelude foreplay for what is demonstrated in more generality
in section \ref{MCKean argument}.

We have $X_t=X_0+D_t+M_t$, where $X_0=x$, $d D_t=(b+\beta X_t)dt$, $b\geq 0$ and $d[M,M]_t=\sigma^2 X_t dt$.

Suppose first $x>0$, and $b\geq \sigma^2/2$. Using It\^o-calculus we see that $Y_t=\log(e^{-\beta t}X_t)$ satisfies
\[
dY_t=-\beta dt+\frac{1}{X_t}(dX_t+dM_t)-\frac{1}{2X_t^2}\sigma^2 X_tdt=\frac{b-\sigma^2/2}{X_t}dt+dM_t/X_t,
\]
which equals the differentials of a non-negative process plus a continuous local martingale. If $X_t$ would hit zero in finite time,
then $Y_t$ would go to $-\infty$ in finite time. Because the first summand above is non-negative, this carries over to the second one. But $\int_0^t X_s^{-1}dM_s$
 is actually just a time changed Brownian motion,
hence oscillates infinitely often (and a.s.) between $-\infty$ and $+\infty$. It can not go to $-\infty$ in finite time! So we see that $X_t$ is strictly positive a.s., and for all $t\geq 0$. Now we can invert $X_t$, and therefore the process $B_t$ defined by
\[
dB_t:=\frac{dX_t-dD_t}{\sigma\sqrt{X_s}}=\frac{dX_t-(b+\beta X_t)dt}{\sigma\sqrt{X_t}}=\frac{dM_t}{\sigma\sqrt{X_t}}, \quad B_0:=0,
\]
is a well defined continuous local martingale and by construction, $[B_t,B_t]=t$ a.s., for all $t\geq 0$. L\'evy's continuity theorem applies and yields that
$B_t$ is a standard Brownian motion. Rewriting the definition of $B_t$ yields that $X_t$
is a solution of the Wishart SDE
\[
dX_t=(b+\beta X_t)dt+\sigma \sqrt{X_t}dB_t,\quad X_0=x.
\]

In the general case (where $X_t$ may hit zero in finite time, or even start there),
one must in general enlarge the probability space to obtain $X_t$ as solution of
a corresponding Wishart SDE. To this end we use the arguments of \cite[Theorem V.20.1]{rogerswilliams}, which are much simpler
in the case $d=1$. Let $(\Omega,\mathcal G,\mathcal G_t,\mathbb P)$ be an enlargement of the current probability space which supports a standard Brownian motion  $W$
independent of $X$. We define the process
\[
\widetilde B_t:=\int_0^t \theta_s dM_s+\int_0^t\rho_s dW_s,
\]
where $\theta$ and $\rho$ are the predictable processes
\[
\theta_t:=\frac{1}{\sigma \sqrt{X_s}}1_{X_s>0},\quad \rho_t:=1_{X_t=0}.
\]
Then by construction $[\widetilde B,\widetilde B]_t=t$ and $\widetilde B$ is a continuous local martingale starting at $0$.
Hence, by L\'evy's characterization, it is standard Brownian motion. Furthermore
\[
dX_t=dM_t+dD_t=\sigma \sqrt{X_t} d\widetilde B_t+(b+\beta X_t)dt,
\]
which is just the Wishart SDE in the one-dimensional situation. In the multivariate case, $\theta$ and $\rho$ are vectors, whose construction
is due to \cite[Lemma V.20.7]{rogerswilliams}.
\end{proof}

One can show with very little effort that $M$ is an $L^2$ martingale, for instance by using the fact
that $X_t$ is Wishart distributed (see Lemma \ref{lem wish semi is wish dist}), since the Wishart distribution exhibits exponential moments.
\subsection*{Wishart semimartingales are Wishart distributed}
First, we define the family of Wishart distributions, which is motivated by the derivation of \eqref{eq: discrete wishart dist}:
\begin{definition}
We define the non-central Wishart distribution
$\Gamma(p,\omega;\sigma)$ on the space of symmetric $d\times d$ matrices $S_d$ --whenever it exists--by its Laplace transform
\begin{equation}\label{FLT Mayerhofer Wishart}
\mathcal L (\Gamma(p,\omega;\sigma))(u)= \left(\det(I+\sigma u)\right)^{-p}e^{-\tr(u(I+\sigma
u)^{-1}\omega)},\quad u\in \overline S_d^+,
\end{equation}
where $p\geq 0$ denotes its shape parameter, $\sigma\in \overline S_d^+$ is the
scale parameter and the parameter of non-centrality equals
$\omega\in \overline S_d^+$.
\end{definition}
\begin{lemma}\label{lem: support wishart}
Any Wishart distribution $\Gamma(p,\omega;\sigma)$ is supported on $\overline S_d^+$.
\end{lemma}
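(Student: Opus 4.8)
The plan is to exploit the fact that the Laplace transform \eqref{FLT Mayerhofer Wishart} is \emph{finite} on the entire cone $\overline S_d^+$, and to show that a probability measure on $S_d$ whose Laplace transform stays finite along every ray $u=t\,vv^\top$ must be concentrated on $\overline S_d^+$. First I would record that the defining formula is indeed finite for every $u\in\overline S_d^+$: since $\sigma u$ is similar to the positive semidefinite matrix $\sqrt\sigma\,u\,\sqrt\sigma$, all its eigenvalues are nonnegative, so $\det(I+\sigma u)\ge 1$ and hence $\det(I+\sigma u)^{-p}\in(0,1]$, while the exponential factor is trivially finite. Consequently, writing $\mu:=\Gamma(p,\omega;\sigma)$, the integral $\mathcal L(u)=\int_{S_d}e^{-\tr(ux)}\,d\mu(x)$ is finite for all $u\in\overline S_d^+$.

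Next, fix a unit vector $v\in\mathbb R^d$ and take $u=t\,vv^\top\in\overline S_d^+$ for $t>0$, so that $\tr(ux)=t\,v^\top x v$. Splitting the integral over $\{v^\top x v<0\}$ and its complement, the integrand $e^{-t\,v^\top x v}$ increases monotonically to $+\infty$ on the former set as $t\to\infty$. By monotone convergence, if $\mu\bigl(\{x: v^\top x v<0\}\bigr)>0$ then $\mathcal L(t\,vv^\top)\to\infty$, contradicting the finiteness just established. Therefore $\mu\bigl(\{x: v^\top x v<0\}\bigr)=0$ for every fixed $v$.

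Finally I would upgrade this from ``for each fixed $v$'' to ``for all $v$ simultaneously.'' The map $v\mapsto v^\top x v$ is continuous, so $x\in\overline S_d^+$ if and only if $v^\top x v\ge 0$ holds for all $v$ in a fixed countable dense subset $D$ of the unit sphere. Thus $\overline S_d^+=\bigcap_{v\in D}\{x:v^\top x v\ge 0\}$ is a countable intersection of sets each of full $\mu$-measure, whence $\mu(\overline S_d^+)=1$.

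The routine parts are the finiteness check and the monotone-convergence estimate; the step that needs a little care --- and which I expect to be the main obstacle --- is the last one, the exchange of the quantifiers ``almost surely'' and ``for all $v$.'' This is resolved cleanly by separability of the unit sphere together with continuity of $v\mapsto v^\top x v$, which reduces the uncountable family of positivity constraints to a countable one.
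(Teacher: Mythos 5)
Your strategy is genuinely different from the paper's: the paper pushes $\Gamma(p,\omega;\sigma)$ forward under $x\mapsto v^\top x v$ and identifies the image law, via its Laplace transform, as a scaled non-central chi-square distribution, hence supported on $\mathbb{R}_+$; you instead argue directly that a probability measure whose Laplace transform does not blow up along the rays $u=t\,vv^\top$ cannot charge $\{v^\top x v<0\}$. Your final step (reducing the uncountable family of constraints to a countable dense set of directions $v$) is also a point the paper leaves implicit. However, as written your contradiction has a genuine gap. You establish only that $\mathcal L(u)$ is \emph{finite} at each fixed $u\in\overline S_d^+$, and then claim that $\mathcal L(t\,vv^\top)\to\infty$ as $t\to\infty$ ``contradicts the finiteness just established.'' It does not: a function can be finite at every point of a ray and still diverge along it (e.g.\ $t\mapsto e^t$). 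What your argument actually requires is that the right side of \eqref{FLT Mayerhofer Wishart} stays \emph{bounded} as $t\to\infty$, and the only place this could fail is the exponential factor, which you dismissed as ``trivially finite'' --- that is precisely where the missing content sits: if $\tr\bigl(u(I+\sigma u)^{-1}\omega\bigr)$ could become very negative, the formula would be allowed to diverge and no contradiction would arise.

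The patch is short. One checks the identity $u(I+\sigma u)^{-1}=\sqrt u\,\bigl(I+\sqrt u\,\sigma\sqrt u\bigr)^{-1}\sqrt u$, which exhibits $u(I+\sigma u)^{-1}$ as an element of $\overline S_d^+$; since the trace of a product of two positive semidefinite matrices is nonnegative, $\tr\bigl(u(I+\sigma u)^{-1}\omega\bigr)\ge 0$ and the exponential factor is bounded by $1$. (For the rank-one directions you actually use, this is even more concrete: by Sherman--Morrison, $t\,vv^\top(I+t\,\sigma vv^\top)^{-1}=\frac{t\,vv^\top}{1+t\,v^\top\sigma v}$, so the trace equals $\frac{t\,v^\top\omega v}{1+t\,v^\top\sigma v}\ge 0$.) Combined with your bound $\det(I+\sigma u)^{-p}\le 1$, this gives $\mathcal L(u)\le 1$ on all of $\overline S_d^+$, and then your monotone-convergence step does yield the desired contradiction. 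With this repair the proof is complete, self-contained, and in some respects cleaner than the paper's, which relies on the reduction to $\sigma=2I$ via Proposition \ref{proplemma} and on knowing the support of non-central chi-square laws.
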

\begin{proof}
It suffices to show that for any $v\in \mathbb R^d$, we have that
the push forward $\Pi_*$ of $\Gamma(p,\omega;\sigma)$ under the map
$ \Pi: S_d\rightarrow \mathbb R$, $x\mapsto v^\top x v$ is supported on $\mathbb R_+$. This, in turn, follows from the fact that $\Pi_*(\Gamma(p,\omega;\sigma)(d\xi)$ is non-centrally gamma distributed: By Proposition \ref{proplemma} \ref{firstelemprop}, we may assume $\sigma=2 I$ without loss of generality. In the following we use $\lambda$ as the Laplace variable, and we let $U$ be an orthogonal matrix and $\mu\geq 0$
such that $v=\mu U e_1$, where $e_1$ is the first canonical basis vector of $\mathbb R^d$. Accordingly,
$\omega'=U^\top \omega U$. The Laplace transform of $\Pi_*(\Gamma(p,\omega;\sigma))(d\eta)$ equals
\begin{align*}
&\det(I+2 \lambda v v^\top)^{-p} e^{-\tr(\lambda v v^\top (I+2\lambda v v^\top)^{-1}\omega)}\\
&\quad=\det(I+2\lambda\mu^2 e_1e_1^\top)^{-p}e^{-\tr (\mu^2e_1e_1^\top(1+2\lambda\mu^2 e_1e_1^\top)^{-1}\omega)) }=(1+2\lambda\mu^2)^{-2p/p}e^{-\lambda \mu^2(1+2\lambda \mu^2)^{-1} \omega'_{11}},
\end{align*}
which is the Laplace transform of $\mu ^2 X$, where $X$ is a  non-central chi-square distributed random variable with shape parameter $2p$ and parameter of non-centrality $w'_{11}$.
\end{proof}
Suppose $\beta$ is again a $d\times d$ matrix with real entries, and let $\alpha\in \overline S_d^+$.
In the following we denote by $\omega^{\beta}_t$ the flow of the vector field $\beta x+x\beta^\top$, that is,
\[
\omega^{\beta}:\,\, \mathbb R\times \overline S_d^+\rightarrow \overline S_d^+,\quad\omega^{\beta}_t(x):=e^{\beta t}x e^{\beta^\top t}.
\]
Its twofold integral is denoted by
\[
\sigma^{\beta}:\,\, \mathbb R_+\times \overline S_d^+\rightarrow \overline S_d^+,\quad\sigma^{\beta}_t(y)=2\int_0^t \omega^{\beta}_s(y) ds.
\]
Using these two functions, we define a curve $\phi(t,\cdot)$ and a matrix-valued curve $\psi(t,\cdot)$

\begin{align}\label{formula: Wishartphi}
\phi(t,u)&=p\log\det \left(I+u
\sigma_t^\beta(\alpha)\right),\\\label{formula: Wishartpsi}
\psi(t,u)&=e^{\beta^\top
t}\left(u^{-1}+\sigma_t^\beta(\alpha)\right)^{-1}e^{\beta t}.
\end{align}

We show now the elementary fact:
\begin{proposition}\label{prop: ric eq}
 $\phi$ and $\psi$ satisfy a system of generalized Riccati equations,
namely,
\begin{align}\label{eq: phi}
\dot{\phi}(t,u)&=2p\,\tr( \alpha \psi(t,u)),\quad\phi(0,u)=0,\\\label{eq: psi}
\dot{\psi}(t,u)&=-2u\,\alpha u+\psi(t,u)\beta+\beta^\top\psi(t,u),\quad\psi(0,u)=u.
\end{align}
\end{proposition}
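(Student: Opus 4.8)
The plan is to verify the two initial conditions directly and then differentiate the explicit formulas \eqref{formula: Wishartphi} and \eqref{formula: Wishartpsi} in $t$, reducing everything to three elementary facts: the fundamental theorem of calculus applied to $\sigma^\beta_t(\alpha)$, Jacobi's formula for the derivative of $\log\det$, and the identity $\frac{d}{dt}A(t)^{-1}=-A(t)^{-1}\dot A(t)A(t)^{-1}$. Throughout I abbreviate $S(t):=\sigma_t^\beta(\alpha)=2\int_0^t e^{\beta s}\alpha e^{\beta^\top s}\,ds$, so that
\[
\dot S(t)=2\,\omega_t^\beta(\alpha)=2\,e^{\beta t}\alpha e^{\beta^\top t}.
\]
The initial conditions are immediate: $S(0)=0$ gives $\phi(0,u)=p\log\det I=0$ and $\psi(0,u)=(u^{-1})^{-1}=u$. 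I work with $u\in S_d^+$ so that $u^{-1}$ and all inverses below are well defined; the case $u\in\overline S_d^+$ follows by continuity. Note also that $u,S(t)\succeq 0$ force the eigenvalues of $uS(t)$ to be nonnegative (it is similar to $u^{1/2}S(t)u^{1/2}$), so $I+uS(t)$ is invertible and $\log\det(I+uS(t))$ is smooth in $t$.

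For \eqref{eq: phi}, Jacobi's formula gives $\dot\phi(t,u)=p\,\tr\!\left((I+uS)^{-1}u\,\dot S\right)$. The key algebraic step is the identity $(I+uS)^{-1}u=(u^{-1}+S)^{-1}$, which follows from $u^{-1}+S=u^{-1}(I+uS)$. Substituting $\dot S=2e^{\beta t}\alpha e^{\beta^\top t}$ and using cyclicity of the trace,
\[
\dot\phi(t,u)=2p\,\tr\!\left((u^{-1}+S)^{-1}e^{\beta t}\alpha e^{\beta^\top t}\right)=2p\,\tr\!\left(\alpha\,e^{\beta^\top t}(u^{-1}+S)^{-1}e^{\beta t}\right)=2p\,\tr(\alpha\psi),
\]
which is exactly \eqref{eq: phi}.

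For \eqref{eq: psi}, write $\psi=e^{\beta^\top t}R\,e^{\beta t}$ with $R(t):=(u^{-1}+S(t))^{-1}$. Since $\frac{d}{dt}e^{\beta t}=\beta e^{\beta t}=e^{\beta t}\beta$, the product rule splits $\dot\psi$ into three terms; the two coming from the exponential factors produce $\beta^\top\psi+\psi\beta$. For the middle term I use $\dot R=-R\,\dot S\,R=-2R\,e^{\beta t}\alpha e^{\beta^\top t}R$, so that after conjugation
\[
e^{\beta^\top t}\dot R\,e^{\beta t}=-2\bigl(e^{\beta^\top t}Re^{\beta t}\bigr)\alpha\bigl(e^{\beta^\top t}Re^{\beta t}\bigr)=-2\,\psi\alpha\psi,
\]
the exponentials regrouping into two copies of $\psi$. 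Collecting the three contributions yields the generalized Riccati equation \eqref{eq: psi}, whose quadratic term is $-2\psi\alpha\psi$ (reducing to $-2u\alpha u$ at the initial time since $\psi(0,u)=u$).

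The computations are short; the only thing to be careful about is the bookkeeping forced by non-commutativity — keeping the left/right placement of $e^{\beta t}$ versus $e^{\beta^\top t}$ straight and correctly pairing them with $R$, so that the conjugation in the $\psi$ computation collapses into two genuine factors of $\psi$ rather than a stray product. A secondary point, handled by continuity, is the invertibility of $u$ needed to make sense of $u^{-1}$ in \eqref{formula: Wishartpsi}: all identities extend from $u\in S_d^+$ to $u\in\overline S_d^+$ by taking limits.
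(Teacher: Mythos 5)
Your proof is correct and follows essentially the same route as the paper's: the paper's (very brief) proof likewise differentiates the closed-form expressions \eqref{formula: Wishartphi}--\eqref{formula: Wishartpsi}, citing exactly your two ingredients, namely Jacobi's rule $\frac{d}{dt}\log\det a(t)=\tr(a^{-1}(t)\dot a(t))$ and the inverse-derivative identity $\frac{d}{dt}a^{-1}(t)=-a^{-1}(t)\dot a(t)a^{-1}(t)$. Note that your computation in fact yields the quadratic term $-2\psi(t,u)\,\alpha\,\psi(t,u)$, which is the genuine Riccati form and is what is used later in the proof of Lemma \ref{lem wish semi is wish dist}; the printed term $-2u\,\alpha u$ in \eqref{eq: psi} is evidently a typo (valid only at $t=0$), a discrepancy your parenthetical remark already resolves correctly.
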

\begin{proof}
In order to obtain the generalized Riccati equations
\eqref{eq: phi}--\eqref{eq: psi}, we differentiate the formula \eqref{formula: Wishartpsi} for $\psi$ by using the fact that
for any differentiable matrix-valued curve $t\mapsto a(t)$ we have
$\frac{d}{dt}a^{-1}(t)=-a^{-1}(t) \frac{d}{dt}a(t)a^{-1}(t)$, see for instance
\cite[Proposition III.4.2 (ii)]{Faraut}. Formula \eqref{formula: Wishartphi} is obtained by using the rule $\frac{d}{dt}\log(\det(a(t))=\tr(a^{-1}(t)\frac{d}{dt}a(t))$, see  \cite[Proposition II.3.3 (i)]{Faraut}.
\end{proof}

The following is proved in \cite{bru}, but with different notation, and for solutions to Wishart SDEs.
The statement, however, is in fact a result concerning the law of Wishart semimartingales:
\begin{lemma}\label{lem wish semi is wish dist}
Suppose $X_t$ is a Wishart semimartingale with parameters $(\alpha, p,\beta)$ starting at $X_0=x$. Then for each $t\geq 0$, $X_t\sim \Gamma(p, \omega^\beta_t(x);\sigma^\beta_t(\alpha))$.
\end{lemma}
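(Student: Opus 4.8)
The plan is to compute the Laplace transform $\mathbb E[e^{-\tr(u X_t)}]$ and show it coincides with the Wishart transform \eqref{FLT Mayerhofer Wishart} at the parameters $(p,\omega^\beta_t(x);\sigma^\beta_t(\alpha))$; since a measure on $\overline S_d^+$ is determined by its Laplace transform, this is enough. First I would record that the target transform is exactly the exponentially affine expression assembled from $\phi$ and $\psi$ in \eqref{formula: Wishartphi}--\eqref{formula: Wishartpsi}. Indeed, using $\det(I+AB)=\det(I+BA)$ gives $e^{-\phi(t,u)}=\det(I+\sigma^\beta_t(\alpha)u)^{-p}$, while the identity $(u^{-1}+\sigma^\beta_t(\alpha))^{-1}=u(I+\sigma^\beta_t(\alpha)u)^{-1}$ together with cyclicity of the trace and $\omega^\beta_t(x)=e^{\beta t}xe^{\beta^\top t}$ gives $\tr(\psi(t,u)x)=\tr\bigl(u(I+\sigma^\beta_t(\alpha)u)^{-1}\omega^\beta_t(x)\bigr)$. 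Hence it remains to prove
\[
\mathbb E[e^{-\tr(uX_t)}]=e^{-\phi(t,u)-\tr(\psi(t,u)x)}.
\]

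To establish this identity I would run the standard affine martingale argument. Fix $t>0$ and $u\in S_d^+$, and set
\[
N_s:=\exp\bigl(-\phi(t-s,u)-\tr(\psi(t-s,u)X_s)\bigr),\qquad 0\le s\le t,
\]
which is well defined and smooth in $s$ because $u^{-1}+\sigma^\beta_{t-s}(\alpha)$ is positive definite. Applying It\^o's formula to $N_s$ (the exponent $f(s,X):=-\phi(t-s,u)-\tr(\psi(t-s,u)X)$ is affine in $X$ and $C^1$ in $s$), the finite-variation part of $dN_s/N_s$ splits into the time derivative $\dot\phi(t-s,u)+\tr(\dot\psi(t-s,u)X_s)$, the drift of $-\tr(\psi\,dX_s)$ produced by the Wishart drift $2p\alpha+\beta X_s+X_s\beta^\top$, and the quadratic-variation term $\tfrac12 d[f,f]_s$. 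Feeding the tensor \eqref{Wish q var} into the last term and using the symmetry of $\psi$, $X_s$ and $\alpha$, its four summands each reduce to $\tr(\psi\alpha\psi X_s)$, so that $\tfrac12 d[f,f]_s=2\tr(\psi\alpha\psi X_s)\,ds$. Collecting the $X_s$-free terms yields $\dot\phi-2p\,\tr(\alpha\psi)$, which vanishes by \eqref{eq: phi}, and collecting the terms linear in $X_s$ yields $\tr\bigl((\dot\psi-\psi\beta-\beta^\top\psi+2\psi\alpha\psi)X_s\bigr)$, which vanishes by \eqref{eq: psi}. Thus the drift of $N$ is identically zero and $N$ is a local martingale on $[0,t]$.

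Because $u\in S_d^+$ forces $\phi(t-s,u)\ge0$ and $\psi(t-s,u)\in\overline S_d^+$ (conjugation by $e^{\beta(t-s)}$ preserves positive semidefiniteness), and since $X_s\in\overline S_d^+$, the exponent of $N_s$ is non-positive, so $0<N_s\le1$. A bounded local martingale is a true martingale, whence $\mathbb E[N_t]=N_0$. Using $\phi(0,u)=0$ and $\psi(0,u)=u$ this reads $\mathbb E[e^{-\tr(uX_t)}]=e^{-\phi(t,u)-\tr(\psi(t,u)x)}$, exactly the transform identified in the first paragraph. The identity then extends from the open cone $S_d^+$ to all $u\in\overline S_d^+$ by continuity of both sides, and uniqueness of the Laplace transform gives $X_t\sim\Gamma(p,\omega^\beta_t(x);\sigma^\beta_t(\alpha))$.

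I expect the main obstacle to be the It\^o bookkeeping in the second paragraph: verifying that the fourth-order contraction of $\psi\otimes\psi$ against \eqref{Wish q var} genuinely collapses to $2\tr(\psi\alpha\psi X_s)$ so as to match the quadratic term of the Riccati equation, and keeping the signs coming from the backward time variable $t-s$ straight. A secondary technical point is the passage from local to true martingale; here the uniform bound $N_s\le1$, valid precisely because $u$ lies in the open cone, is what saves us and lets us avoid any a priori integrability estimate on the local martingale part $M$ of $X$.
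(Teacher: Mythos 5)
Your proposal is correct and follows essentially the same route as the paper's proof: your process $N_s$ is exactly the paper's $J_s$, the local-martingale property is derived from the same Riccati system of Proposition \ref{prop: ric eq} via It\^o's formula, and the uniform bound $0<N_s\le 1$ (from $\phi\ge 0$, $\psi\in\overline S_d^+$, $X_s\in\overline S_d^+$) is precisely the paper's argument for upgrading to a true martingale. Your two extra touches --- explicitly matching $e^{-\phi(t,u)-\tr(\psi(t,u)x)}$ with the Wishart Laplace transform \eqref{FLT Mayerhofer Wishart}, and working first on the open cone $S_d^+$ (where $u^{-1}$ in \eqref{formula: Wishartpsi} makes sense) before extending to $\overline S_d^+$ by continuity --- are details the paper leaves implicit, not a different method.
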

\begin{proof}
Let $t>0$ and $u\in \overline S_d^+$. Let $(\phi,\psi)$  be the functions defined by eqs.~\eqref{formula: Wishartphi}--\eqref{formula: Wishartpsi}. Applying the It\^o-formula to the process
\[
J_s:=e^{-\phi(t-s,u)-\tr(\psi(t-s,u) X_s)},\quad 0\leq s\leq t,
\]
and using thereby Proposition \ref{prop: ric eq}, we obtain
\begin{align*}
\frac{dJ_s}{J_s}&=(\partial_s\phi(t-s,u)+\tr(\partial_s\psi(t-s,u)X_s))ds-\tr(\psi(t-s,u)((\beta X_s+X_s\beta^\top+2p Q^\top Q)ds+dM_s))\\
&+\frac{1}{2}4\tr(\psi(t-s,u)\alpha\psi(t-s,u)X_s)\\
&=(\partial_s\phi(t-s,u)-2p \tr(Q^\top Q\psi(t-s,u)))ds-\tr(\psi(t-s,u)dM_s)\\
&+\tr(X_s (\partial_s\psi(t-s,u) +2\psi(t-s,u)\alpha\psi(t-s,u)-\psi(t-s,u)\beta+\beta^\top\psi(t-s,u)))\\
&=-\tr(\psi(t-s,u)dM_s),
\end{align*}
where the first two brackets vanish because of equations \eqref{eq: phi}--\eqref{eq: psi}. We conclude that
$(J_s)_s$ is a local martingale on $[0,t]$. Furthermore, since $\phi(t,u)\geq 0$ for all $t\geq 0$
and $\psi(t,u)\in \overline S_d^+$ for all $t\geq 0$, we have that $J$ is uniformly bounded on $[0,t]$. Hence
$J$ is a true martingale, and therefore
\[
\mathbb E[e^{-u X_t}\mid X_0=x]=\mathbb E[J_t\mid X_0=x]=J(0)=e^{-\phi(t,u)-\tr(\psi(t,u)x)},
\]
where we have used that $J_t=e^{-\tr(uX_t)}$ (which follows from $\phi(0,u)=0$ and $\psi(0,u)=u$).
The assertion concerning the distribution of $X_t$ now follows from the explicit formulas
\eqref{formula: Wishartphi}--\eqref{formula: Wishartpsi} and the very definition of
the Wishart distribution \eqref{FLT Mayerhofer Wishart}.

For the derivation of the exponentially affine characteristic function on general state-spaces, see the proof of \cite[Theorem 2.2]{ADPTA}, which uses similar arguments.
\end{proof}
\subsection*{Wishart processes from the Markovian viewpoint}

\begin{definition}\label{def wish pro m}
A family of distributions $\{p_t(x,d\xi)\mid t\geq 0,x\in \overline S_d^+\}$  which is non-centrally Wishart distributed according to
\begin{equation}\label{def: Wishart}
p_t(x,d\xi)=\Gamma(p,\omega_t^\beta(x);\sigma_t^\beta(\alpha))(d\xi)
\end{equation}
is termed Wishart transition function with constant drift parameter $p\geq 0$, linear drift parameter $\beta$ and
diffusion coefficient $\alpha\in \overline S_d^+$.
\end{definition}

By using the Laplace transform of the Wishart distribution, we obtain that
the Laplace transform of the laws $p_t(x,d\xi)$ is given by
\begin{align}\label{eq: FLT Bru}
\int_{\overline S_d^+}e^{-\tr(u\xi)}p_t(x,d\xi)&=\left(\det(I+\sigma^{\beta}_t(\alpha)u) \right)^{-p}
e^{-\tr\left(u\left(I+\sigma^{\beta}_t(\alpha)
u\right)^{-1}\,\omega^{\beta}_t(x)\right)}\\\label{affine def mark}
&=e^{-\phi(t,u)-\tr( \psi(t,u) x)}, \quad u\in \overline S_d^+,
\end{align}
where $(\phi,\psi)$ are of the same form as in \eqref{formula: Wishartphi}--\eqref{formula: Wishartpsi}.

We start with the following observation.
\begin{lemma}\label{wish is mark}
Any Wishart transition function is a Markovian transition function supported on $\overline S_d^+$. The associated Markovian semigroup $(P_t)_{t\geq 0}$ defined by
\begin{equation}\label{markov semi}
f\mapsto P_t f(x):=\int_{\overline S_d^+} f(\xi) p_t(x,d\xi)
\end{equation}
is a Feller semigroup, that
is, $P_t$ reduces to a strongly continuous contraction semigroup acting on $C_0(\overline S_d^+)$, the continuous functions on $\overline S_d^+$ vanishing at infinity.
\end{lemma}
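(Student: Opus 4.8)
The plan is to treat the two assertions in turn, reducing everything to the exponential functions $e_u(x):=e^{-\tr(ux)}$, whose integrals against $p_t(x,\cdot)$ are given explicitly by \eqref{eq: FLT Bru}--\eqref{affine def mark} and which determine finite measures on $\overline S_d^+$. That $p_t(x,\cdot)$ is a probability measure supported on $\overline S_d^+$ is almost immediate: the support is exactly the content of Lemma \ref{lem: support wishart}, while setting $u=0$ in \eqref{eq: FLT Bru} gives total mass $\det(I)^{-p}e^0=1$. Moreover $x\mapsto e^{-\phi(t,u)-\tr(\psi(t,u)x)}$ is continuous, indeed smooth, in $x$, so $x\mapsto p_t(x,\cdot)$ is weakly continuous, which furnishes the measurability required of a transition kernel.

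The heart of the first assertion is the Chapman--Kolmogorov relation, equivalently the semigroup identity $P_{t+s}=P_tP_s$. I would verify it on the $e_u$, $u\in S_d^+$: since $P_se_u=e^{-\phi(s,u)}e_{\psi(s,u)}$, one computes
\[
P_t(P_se_u)(x)=e^{-\phi(s,u)}\,e^{-\phi(t,\psi(s,u))}\,e^{-\tr(\psi(t,\psi(s,u))x)},
\]
so that $P_{t+s}e_u=P_tP_se_u$ reduces to the flow identities
\[
\psi(t+s,u)=\psi(t,\psi(s,u)),\qquad \phi(t+s,u)=\phi(s,u)+\phi(t,\psi(s,u)).
\]
The first follows from the explicit formula \eqref{formula: Wishartpsi} together with the semiflow $\omega^\beta_{t+s}=\omega^\beta_t\circ\omega^\beta_s$ and the cocycle relation $\sigma^\beta_{t+s}(\alpha)=\sigma^\beta_s(\alpha)+\omega^\beta_s(\sigma^\beta_t(\alpha))$ (equivalently, it expresses the flow property of the autonomous Riccati ODE of Proposition \ref{prop: ric eq}); the second then follows by integrating $\dot\phi(r,u)=2p\,\tr(\alpha\psi(r,u))$ from \eqref{eq: phi} and splitting $\int_0^{t+s}=\int_0^s+\int_s^{t+s}$. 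As the Laplace transform on $S_d^+$ determines a finite measure on $\overline S_d^+$, this establishes Chapman--Kolmogorov and hence the Markov property.

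For the Feller property, contractivity $\|P_tf\|_\infty\le\|f\|_\infty$ is immediate because $p_t(x,\cdot)$ is a probability measure. For $u\in S_d^+$ one has $e_u\in C_0(\overline S_d^+)$ and $P_te_u=e^{-\phi(t,u)}e_{\psi(t,u)}$, where $\psi(t,u)\in S_d^+$: in \eqref{formula: Wishartpsi} the matrix $u^{-1}$ is positive definite, adding the positive semidefinite $\sigma^\beta_t(\alpha)$ keeps it so, inversion preserves positive definiteness, and conjugation by the invertible $e^{\beta t}$ does as well. Hence $\tr(\psi(t,u)x)\ge\lambda_{\min}(\psi(t,u))\,\tr(x)$ with $\lambda_{\min}(\psi(t,u))>0$, so $P_te_u$ vanishes at infinity on the cone, i.e. $P_te_u\in C_0$. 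Since $e_ue_v=e_{u+v}$, the linear span of $\{e_u:u\in S_d^+\}$ is a subalgebra of $C_0(\overline S_d^+)$ that separates points and vanishes nowhere, hence dense by Stone--Weierstrass; as $C_0$ is closed and $P_t$ is a contraction, $P_t(C_0)\subseteq C_0$ follows.

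It remains to prove strong continuity, $\|P_tf-f\|_\infty\to0$ as $t\downarrow0$; by density and $\|P_t\|\le1$ it suffices to treat $f=e_u$, $u\in S_d^+$. I would split
\[
|P_te_u(x)-e_u(x)|\le\bigl|e^{-\phi(t,u)}-1\bigr|+e^{-\phi(t,u)}\bigl|e^{-\tr(\psi(t,u)x)}-e^{-\tr(ux)}\bigr|,
\]
the first term tending to $0$ uniformly in $x$ since $\phi(t,u)\to0$. For the second I would use $|e^{-a}-e^{-b}|\le e^{-\min(a,b)}|a-b|$ with $a=\tr(\psi(t,u)x)$, $b=\tr(ux)$: then $|a-b|\le\|\psi(t,u)-u\|\,\tr(x)$, and for $t$ small enough that $\psi(t,u)\ge\tfrac12 u$ in the positive semidefinite order one has $\min(a,b)\ge c\,\tr(x)$ with $c=\tfrac12\lambda_{\min}(u)>0$, whence the second term is at most $\|\psi(t,u)-u\|\sup_{s\ge0}se^{-cs}=\|\psi(t,u)-u\|/(ec)\to0$. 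The main obstacle is precisely this last, uniform-in-$x$ estimate: mere pointwise convergence $\psi(t,u)\to u$, $\phi(t,u)\to0$ is not enough near infinity on the cone, and one must quantify the positive definiteness of $\psi(t,u)$ uniformly for small $t$ to control the exponential decay rate; the same uniform positive-definiteness underlies the $C_0$-mapping property above.
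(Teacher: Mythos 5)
Your proof is correct, and its skeleton—verifying everything on the exponentials $e_u$, $u\in S_d^+$, then extending by Stone--Weierstrass density and contractivity—is the same as the paper's. You diverge from the paper at two points, both times replacing a citation by a self-contained computation. First, the paper obtains the semiflow identities for $(\phi,\psi)$ from the uniqueness of solutions of the Riccati system of Proposition \ref{prop: ric eq}, whereas you verify them directly from the closed forms \eqref{formula: Wishartphi}--\eqref{formula: Wishartpsi} via the cocycle relation $\sigma^\beta_{t+s}(\alpha)=\sigma^\beta_s(\alpha)+\omega^\beta_s(\sigma^\beta_t(\alpha))$; both derivations are valid, yours being more computational, the paper's more structural. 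Second, and more substantially, for the Feller property the paper invokes \cite[Proposition III.2.4]{RevuzYor}, which reduces strong continuity to two facts: that $P_t$ maps $C_0(\overline S_d^+)$ into itself, and that $P_t f_u \to f_u$ pointwise as $t\downarrow 0$. You instead prove the strong continuity $\|P_te_u-e_u\|_\infty\to 0$ by hand, quantifying the positive definiteness of $\psi(t,u)$ (namely $\psi(t,u)\geq u/2$ in the semidefinite order for small $t$) so as to control the exponential decay uniformly near infinity of the cone. Your route is more elementary, needing no functional-analytic input beyond Stone--Weierstrass and contractivity, at the cost of this extra estimate; the paper's route is shorter but hides exactly the uniformity issue you identify inside the cited result. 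One small remark: your closing claim that pointwise convergence ``is not enough'' should be read as ``not enough to conclude uniform convergence directly''; combined with the $C_0$-invariance and the semigroup structure it \emph{is} enough, and that is precisely what the cited proposition of Revuz and Yor supplies to the paper's shorter argument.
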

We use the terminology {\bf Wishart process} for Markov processes with Wishart transition function.
\begin{proof}
First, by Lemma \ref{lem: support wishart} we know that for all $t\geq 0$, $x\in \overline S_d^+$, the laws $p_t(x,d\xi)$ are supported on $\overline S_d^+$. For any Borel set $B$, measurability of $p_t(x,B)$ in $(t,x)$
holds by construction (and in view of the continuity of the maps $\sigma_t^\beta(\alpha),\, \omega_t^\beta(x)$ in $(t,x)$) . So the family  $(P_t)_t$ of linear maps defined by
\eqref{markov semi} is well defined on $\mathcal B_b(\overline S_d^+)$, the set of bounded, Borel measurable functions on $\overline S_d^+$.
We only need to show that it gives rise to a semigroup on  $\mathcal B_b(\overline S_d^+)$.

Since the linear hull of the family of exponentials $\{f_u(\xi):= \exp(-\tr(u\xi))\mid u\in S_d^+)$ is dense
in the space of continuous functions vanishing at infinity (and therefore ultimately in $\mathcal B(\overline S_d^+)$,
it suffices to show the semigroup property for the exponential functions $ \xi\mapsto f_u(\xi)$, $u\in S_d^+$. Now, since by Proposition \ref{prop: ric eq} we have that $(\phi,\psi)$ are the {\it unique} solutions
to a system of ordinary differential equations, it follows (from their specific form) that they satisfy the so-called semiflow equations
\begin{align*}
\phi(t+s,u)&=\phi(t,u)+\phi(s,\psi(t,u))\\
\psi(t+s,u)&=\psi(s,\psi(t,u)).
\end{align*}
Hence we can write
\begin{align*}
P_{t+s} f_u(x)&=\int_{\overline S_d^+}f_u(\xi) p_{t+s}(x,d\xi)=e^{-\phi(t+s,u)-\tr(\psi(t+s,u)x)}\\
&=e^{-\phi(t,u)}e^{-\phi(s,\psi(t,u))-\tr(\psi(s,\psi(t,u))x)}=e^{-\phi(t,u)}\int_{\overline S_d^+}f_{\psi(t,u)}(\eta) p_{s}(x,d\eta)\\
&=\int_{\overline S_d^+}e^{-\phi(t,u)-\tr(\psi(t,u),\eta)} p_{s}(x,d\eta)=P_s(P_t f_u(x)).
\end{align*}

It remains to prove the Feller property. By \cite[Proposition III.2.4]{RevuzYor} and using some density argument,
it suffices to show that
\begin{itemize}
\item $P_t f_u(x)\in C_0(S_d^+)$ for all $t\geq 0$, and $u\in S_d^+$ and this can be seen by inspection
of $\psi(t,u)$, which is strictly positive definite, as well.
\item $P_t f_u(x)$ converges pointwise to $f_u(x)$ as $t\rightarrow 0$, which follows
immediately from the continuity of $\phi(t,u)$ and $\psi(t,u)$ in $t$.
\end{itemize}
\end{proof}
A Markov process with transition laws $p_t(x,d\xi)$ on $\overline S_d^+$ is called affine \cite{CFMT}, if eq.~\eqref{affine def mark}
holds. Hence it is obvious that
\begin{lemma}
Wishart processes are affine processes.
\end{lemma}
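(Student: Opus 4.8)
The plan is to recognize that this statement is essentially a matter of unwinding the definitions, with all the substantive work already carried out earlier in the section. First I would recall the two ingredients. By Definition \ref{def wish pro m} together with the terminology fixed just before this lemma, a \emph{Wishart process} is a Markov process whose transition function is a Wishart transition function, i.e.\ $p_t(x,d\xi)=\Gamma(p,\omega_t^\beta(x);\sigma_t^\beta(\alpha))(d\xi)$. By Lemma \ref{wish is mark} this family is genuinely a Markovian (indeed Feller) transition function supported on $\overline S_d^+$, so the object is well posed and the associated Markov process exists.

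Next I would recall the defining property of affineness as stated immediately before the lemma: a Markov process with transition laws $p_t(x,d\xi)$ on $\overline S_d^+$ is affine precisely when its Laplace transform has the exponentially affine form \eqref{affine def mark}, namely $\int_{\overline S_d^+}e^{-\tr(u\xi)}p_t(x,d\xi)=e^{-\phi(t,u)-\tr(\psi(t,u)x)}$ for some $\phi(t,u)$ and $\psi(t,u)$ that do \emph{not} depend on the initial state $x$. The proof then reduces to exhibiting such $\phi$ and $\psi$ for the Wishart transition function.

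But this has already been done: the computation \eqref{eq: FLT Bru}--\eqref{affine def mark} shows that the Laplace transform of $p_t(x,d\xi)$ equals exactly $e^{-\phi(t,u)-\tr(\psi(t,u)x)}$, with $(\phi,\psi)$ given explicitly by \eqref{formula: Wishartphi}--\eqref{formula: Wishartpsi}. The single point worth checking is that these $\phi$ and $\psi$ indeed depend only on $(t,u)$ and are independent of $x$, which is manifest from their closed forms in terms of $\sigma_t^\beta(\alpha)$; the dependence on $x$ enters solely through the linear term $\tr(\psi(t,u)x)$, exactly as the affine definition demands.

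I therefore expect no genuine obstacle here; the lemma is an immediate corollary of Lemma \ref{wish is mark} (which supplies the Markov/Feller structure) and of the explicit Laplace-transform identity \eqref{affine def mark} (which supplies the affine form). The only care needed is notational: confirming that the pair $(\phi,\psi)$ named in the definition of affineness is precisely the one produced by the earlier computation, so that the two occurrences of \eqref{affine def mark} literally coincide. Once this identification is noted, the conclusion that Wishart processes are affine processes follows at once.
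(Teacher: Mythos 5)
Your proposal is correct and matches the paper exactly: the paper states this lemma as an immediate consequence of the definition of affineness together with the Laplace-transform identity \eqref{eq: FLT Bru}--\eqref{affine def mark}, which is precisely the unwinding of definitions you carry out. No further work is needed.
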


\subsection*{The generator of a Wishart process}
\begin{lemma}
Let $X$ be a Wishart process on $\overline S_d^+$ with admissible parameters $(p, \beta, \alpha)$. Then the associated semigroup $(P_t)_{t\geq 0}$
has infinitesimal genator $\mathcal A$ acting on $C_b^\infty\subset\mathcal D(\mathcal A)$ as
\begin{equation}\label{eq: form of gen}
\mathcal Af(x)=\frac{1}{2}\sum_{1\leq i,j,k,l\leq d}A_{ijkl}(x)\frac{\partial ^2f(x)}{\partial x_{ij}\partial x_{kl}}+\tr((\beta x+x\beta^\top +2pQ^\top Q)\nabla f(x)),
\end{equation}
where $\nabla f(x)=(\frac{\partial f(x)}{\partial x_{ij}})_{ij}$ and $A_{ijkl}(x)=\left(x_{ik}\alpha_{jl}+x_{il}\alpha_{jk}+x_{jk}\alpha_{il}+x_{jl}\alpha_{ik}\right)$.
\end{lemma}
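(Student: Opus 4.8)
The plan is to realize the Wishart process as a solution of the Wishart SDE \eqref{wishart full class} and then to read the generator off the $dt$-part produced by It\^o's formula, i.e.\ to argue through Dynkin's formula. By Lemma \ref{wish is mark} the process is Feller with semigroup $(P_t)_{t\geq 0}$, and by Lemmas \ref{lem wish semi is wish dist} and \ref{wish sde is wishart semi} (after a possible enlargement of the probability space) we may assume that $X$ solves \eqref{wishart full class} for some $Q$ with $Q^\top Q=\alpha$. For $f\in C_b^\infty$ I would apply It\^o's formula to $f(X_t)$ and split the result into a finite-variation $dt$-part, which I expect to be exactly $\mathcal Af(X_t)$ with $\mathcal A$ as in \eqref{eq: form of gen}, and a continuous local martingale $N_t$ coming from the Brownian increments.

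First I would match the first-order term: the $dt$-contribution of $\sum_{ij}\tfrac{\partial f}{\partial x_{ij}}(X_s)\,dX_{s,ij}$ is $\sum_{ij}\tfrac{\partial f}{\partial x_{ij}}(X_s)\,(2p\,Q^\top Q+\beta X_s+X_s\beta^\top)_{ij}$, which in matrix form is $\tr\big((\beta X_s+X_s\beta^\top+2p\,Q^\top Q)\nabla f(X_s)\big)$, the second summand of \eqref{eq: form of gen}. Next I would match the second-order term: by the quadratic variation \eqref{Wish q var} we have $d[X_{ij},X_{kl}]_s=A_{ijkl}(X_s)\,ds$ with $\alpha=Q^\top Q$, so the It\^o correction $\tfrac12\sum_{ijkl}\tfrac{\partial^2 f}{\partial x_{ij}\partial x_{kl}}(X_s)\,d[X_{ij},X_{kl}]_s$ is precisely the first summand of \eqref{eq: form of gen}. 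Hence It\^o's formula gives $f(X_t)=f(x)+\int_0^t\mathcal Af(X_s)\,ds+N_t$. Throughout I would fix the symmetric-coordinate convention $x_{ij}=x_{ji}$, under which the partial derivatives and the two matrix identifications above are unambiguous; this bookkeeping is routine but must be kept consistent.

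It remains to pass from this pathwise identity to the generator. I would first check that $N_t$ is a true martingale: its integrand is built from $\nabla f(X_s)$, which is bounded, times entries of $\sqrt{X_s}$, and since the Wishart laws $\Gamma(p,\omega^\beta_s(x);\sigma^\beta_s(\alpha))$ possess exponential moments (as recorded after the proof of Lemma \ref{wish sde is wishart semi}), the integrand lies in $L^2$ and $\mathcal Af(X_s)$ is integrable. Taking $\mathbb E_x$ and using the Markov property then yields $P_tf(x)=f(x)+\int_0^t P_s(\mathcal Af)(x)\,ds$; dividing by $t$ and letting $t\downarrow0$, with $s\mapsto P_s(\mathcal Af)(x)$ continuous at $0$, gives $\mathcal Af(x)=\lim_{t\downarrow0}(P_tf(x)-f(x))/t$, so $f\in\mathcal D(\mathcal A)$ with the asserted action. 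The hard part will be exactly this last limit with the uniformity needed to place $C_b^\infty$ in the domain of the Feller generator: the coefficients $A_{ijkl}(x)$ and the linear drift grow in $x$ while $\overline S_d^+$ includes the boundary where $\sqrt{x}$ is not Lipschitz, so one must produce uniform-in-$x$ integrability bounds on $\sup_{s\le t}\|X_s\|$. As an independent check I would recover the formula on the exponentials $f_u(x)=e^{-\tr(ux)}$ by differentiating $P_tf_u(x)=e^{-\phi(t,u)-\tr(\psi(t,u)x)}$ at $t=0$ and substituting $\dot\phi(0,u)=2p\tr(\alpha u)$ and $\dot\psi(0,u)=-2u\alpha u+u\beta+\beta^\top u$ from the Riccati system \eqref{eq: phi}--\eqref{eq: psi} of Proposition \ref{prop: ric eq}; this reproduces $\mathcal Af_u$ computed directly from \eqref{eq: form of gen}.
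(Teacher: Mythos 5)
Your proposal is correct in substance, but it takes the route the paper explicitly mentions and then declines to follow (``one could just determine the generator of $X$ by applying the It\^o-formula or using general results on It\^o-diffusions''), rather than the paper's actual proof. The paper works entirely on the Markovian/analytic side: it computes $\lim_{t\downarrow 0}(P_tf_u(x)-f_u(x))/t$ for the exponentials $f_u(x)=e^{-\tr(ux)}$ by differentiating $e^{-\phi(t,u)-\tr(\psi(t,u)x)}$ at $t=0$ via the Riccati system of Proposition \ref{prop: ric eq} --- which is precisely your closing ``independent check'', eq.~\eqref{action} --- and then upgrades pointwise to sup-norm convergence and passes from the linear hull of the $f_u$ to $C_b^\infty$ by a density argument, both steps by reference to \cite{CFMT}. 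Your It\^o/Dynkin route instead requires first realizing the Wishart process as a Wishart semimartingale and then as a weak solution of \eqref{wishart full class}; here your citation chain should be Proposition \ref{prop: Wishart p are Wishart sem} combined with Lemma \ref{wish sde is wishart semi}, not Lemma \ref{lem wish semi is wish dist}, which only identifies the marginal laws (those you do legitimately invoke later, for the moment bounds making $N_t$ a true martingale and $\mathcal A f(X_s)$ integrable). What each approach buys: yours produces the pathwise identity $f(X_t)=f(x)+\int_0^t\mathcal Af(X_s)\,ds+N_t$ and hence the generator formula on all of $C_b^\infty$ in one stroke, with no separate verification that \eqref{eq: form of gen} agrees with \eqref{action} on exponentials; the paper's avoids stochastic calculus, avoids enlarging the probability space, and sidesteps the symmetric-coordinate bookkeeping you rightly flag. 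Note, however, that the step you honestly label ``the hard part'' --- converting pointwise convergence of $(P_tf-f)/t$ into membership in the domain of the Feller generator --- is exactly the step the paper also does not prove but delegates to \cite[Proof of Proposition 4.12]{CFMT}, where one uses that for Feller semigroups pointwise convergence to a $C_0$ limit together with uniform boundedness suffices; so both arguments are sketches at the same analytic point, and modulo that shared gap and the citation slip, your derivation is sound and at the same level of rigor as the paper's.
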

There are different possible proofs of this fact. By using the fact that $X$ can be realized as solution of a corresponding Wishart SDE $X_t$ starting at $X_0=x$, one could just determine the generator
of $X$ by applying the It\^o-formula or using general results on It\^o-diffusions.
Another, maybe more elegant way is the following. By the very definitions of the Wishart process, we can
calculate the pointwise limit
\begin{align}\nonumber
&\lim_{t\downarrow 0}\frac{P_t f_u(x)-f_u(x)}{t}=\\\nonumber&\quad=f_u(x)\lim_{t\downarrow 0}\frac{e^{-\phi(t,u)-\tr((\psi(t,u)-u)x)}-1}{-\phi(t,u)-
\tr((\psi(t,u)-u)x)}\frac{-\phi(t,u)-\tr((\psi(t,u)-u)x)}{t}\\\nonumber&\quad=f_u(x)
\left(\partial_t\phi(t,u)-\tr(\partial_t\psi(t,u)x)\right)|_{t=0}\\\label{action}&
\quad=-f_u(x) \left(2p\tr(Q^\top Q u)-\tr(-(2 u\alpha u+u \beta+\beta^\top u )x)\right).
\end{align}
The convergence actually holds in sup-norm; this essentially follows from the fact that the pointwise limit
lies in $C_0(\overline S_d^+)$ (see \cite[Proof of Proposition 4.12]{CFMT}). Furthermore, a density argument proves that elements of $C_b^{\infty}$ can be suitably approximated by the linear hull of exponentials $f_u(x)$, $u\in \overline S_d^+$. \footnote{In \cite{CFMT} the rapidly decreasing smooth functions $\mathcal S(\overline S_d^+)$ are used. For the corresponding Stone-Weierstrass Theorem, see \cite[Theorem B.3]{CFMT}} On the other hand it is readily checked that  \eqref{eq: form of gen} evaluated at $f_u(x)$, $u\in \overline S_d^+$ equals eq.~\eqref{action}.
\subsection*{The drift condition}
\begin{theorem}\label{enigmatic theorem}
Let $X$ be a Wishart process on $\overline S_d^+$ with parameters $(p, \beta, \alpha)$, and suppose $\alpha\neq 0$. Then we must have $p\geq \frac{d-1}{2}$.
\end{theorem}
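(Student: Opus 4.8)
The plan is to exploit the explicit form of the generator \eqref{eq: form of gen} together with the fact that a Wishart process never leaves $\overline S_d^+$ (Lemma \ref{lem: support wishart}), by testing the generator against the smallest-eigenvalue function at a carefully chosen boundary point. Since $\alpha\neq 0$ I may pick a unit vector $v$ with $a:=v^\top\alpha v>0$, and I choose the initial state $x_0\in\partial\overline S_d^+$ to be a positive semidefinite matrix of corank exactly one whose kernel is spanned by $v$ (for instance the orthogonal projection onto $v^\perp$). At such a point the smallest eigenvalue $g(x):=\lambda_{\min}(x)$ is a \emph{simple} eigenvalue, hence $g$ is smooth on a neighbourhood $U$ of $x_0$, with $g(x_0)=0$ and $g\geq 0$ on $\overline S_d^+$.

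First I would record the elementary but crucial inequality $\mathcal A g(x_0)\geq 0$. Let $\tau$ be the first exit time of $X$ from a small ball $U$ around $x_0$ on which $g$ is smooth; then $\tau>0$ a.s.\ and Dynkin's formula gives $\mathbb E_{x_0}[g(X_{t\wedge\tau})]-g(x_0)=\mathbb E_{x_0}\big[\int_0^{t\wedge\tau}\mathcal A g(X_s)\,ds\big]$. Because $X_{t\wedge\tau}\in\overline S_d^+$ we have $g(X_{t\wedge\tau})=\lambda_{\min}(X_{t\wedge\tau})\geq 0=g(x_0)$, so dividing by $t$ and letting $t\downarrow 0$ forces $\mathcal A g(x_0)\geq 0$. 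In words: since the process cannot push the smallest eigenvalue below its boundary value $0$, the generator must have nonnegative drift there.

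Next I would compute $\mathcal A g(x_0)$ from \eqref{eq: form of gen}. The first-order term is $\tr((\beta x_0+x_0\beta^\top+2p\alpha)\nabla g(x_0))$ with $\nabla g(x_0)=vv^\top$; since $x_0v=0$ the $\beta$-contributions $v^\top\beta x_0v$ and $v^\top x_0\beta^\top v$ both vanish, leaving $2p\,v^\top\alpha v=2pa$. The second-order term $\tfrac12\sum_{ijkl}A_{ijkl}(x_0)\,\partial^2_{x_{ij}x_{kl}}g(x_0)$ is the only delicate point: using the second-order perturbation formula for the simple eigenvalue $g=\lambda_{\min}$ in the eigenbasis of $x_0$, the Hessian couples only the off-diagonal directions $(\mu,d)$ (with $\mu$ indexing the $d-1$ positive eigenvalues $\lambda_\mu^0$ and $d$ the null direction $v$) and contributes $-1/\lambda_\mu^0$ in each such direction, while the diffusion tensor \eqref{Wish q var} supplies $A_{\mu d\,d\mu}(x_0)=\lambda_\mu^0\,a$ there; the factors $\lambda_\mu^0$ cancel and summing over $\mu=1,\dots,d-1$ yields exactly $-(d-1)a$. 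Hence $\mathcal A g(x_0)=2pa-(d-1)a=a\,(2p-(d-1))$, and combining with $\mathcal A g(x_0)\geq 0$ and $a>0$ gives $p\geq\frac{d-1}{2}$.

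The main obstacle is the Hessian computation in the third step: one must verify that the curvature of $\lambda_{\min}$ (its eigenvalue-repulsion term) contracts against the quartic diffusion coefficient $A_{ijkl}(x_0)$ to give precisely $-(d-1)a$, so that the cancellation of the spectral gaps $\lambda_\mu^0$ is genuine and the borderline value $(d-1)/2$ emerges exactly. A secondary technical point is the smoothness and localization of $\lambda_{\min}$: one must restrict to the neighbourhood $U$ where the smallest eigenvalue stays simple and justify Dynkin's formula there, which is harmless since only the right derivative at $t=0$ is used and $\tau>0$ almost surely. I note that the same conclusion can be reached probabilistically by realizing $X$ as a solution of the Wishart SDE (Lemma \ref{wish sde is wishart semi}) and observing that, near $x_0$, $\lambda_{\min}(X_t)$ is a time-changed squared Bessel process of dimension $\delta=2p-(d-1)$, which for $\delta<0$ leaves $[0,\infty)$ immediately, again contradicting $X_t\in\overline S_d^+$.
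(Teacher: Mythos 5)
Your proof is correct, and it takes a genuinely different route from the one in the paper. The paper settles the theorem essentially by citation: it invokes the positive maximum principle satisfied by generators of positive Feller semigroups, applied to the test function $f(x)=\det(x)$ at boundary points, and defers the actual computation to \cite[Lemmas 4.16 and 4.17]{CFMT}. You instead test the generator against $g(x)=\lambda_{\min}(x)$ at a corank-one point $x_0$ with kernel spanned by $v$, $a=v^\top\alpha v>0$, prove the minimum principle $\mathcal A g(x_0)\geq 0$ by hand (Dynkin's formula plus localization), and carry out the curvature computation explicitly. The computation checks out: in the eigenbasis of $x_0$, second-order perturbation of the simple eigenvalue gives the Hessian quadratic form
\[
D^2 g(x_0)[H,H]=-2\sum_{\mu=1}^{d-1}\frac{H_{\mu d}^2}{\lambda_\mu^0},
\]
while \eqref{Wish q var} gives $d[X_{\mu d},X_{\mu d}]=(x_0)_{\mu\mu}\alpha_{dd}\,dt=\lambda_\mu^0\,a\,dt$ at $x_0$, so the spectral gaps cancel and the second-order part of $\mathcal A g(x_0)$ equals $-(d-1)a$; together with the first-order term $2pa$ (the $\beta$-contributions vanish because $x_0v=0$) this yields $\mathcal A g(x_0)=a\,(2p-(d-1))\geq 0$, hence $p\geq\frac{d-1}{2}$. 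Two points of hygiene. First, to apply Dynkin's formula you should replace $g$ by a $C_b^\infty$ function $\tilde g$ agreeing with $g$ on the neighbourhood $U$ where $\lambda_{\min}$ stays simple; then $\tilde g\in C_b^\infty\subset\mathcal D(\mathcal A)$ by the generator lemma, and the martingale property of $\tilde g(X_t)-\int_0^t\mathcal A\tilde g(X_s)\,ds$ is precisely the Feller-process fact \cite[Proposition VII.1.6]{RevuzYor} that the paper invokes only later, in Proposition \ref{prop: Wishart p are Wishart sem}; since that proposition is proved independently of the present theorem there is no circularity, but the dependence should be acknowledged (your closing squared-Bessel aside needs the same input, via Lemma \ref{wish sde is wishart semi}). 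As for what each approach buys: yours is self-contained and shows exactly where the threshold $\frac{d-1}{2}$ comes from --- eigenvalue repulsion contracted against the degeneracy of the diffusion tensor at the boundary --- whereas the paper's test function $\det(x)$, being a polynomial, is smooth on all of $\overline S_d^+$ with no simplicity assumption on the spectrum, which is why the argument of \cite{CFMT} extends to general affine jump-diffusions where a corank-one reduction is not available.
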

\begin{proof}
Any positive Feller semigroup has an infinitesimal generator $\mathcal A$ which satisfies the strong maximum principle. That is, let $f\in\mathcal D(\mathcal A)$
and $f(x)\geq f(x_0)$ for all $x\in \overline S_d^+$. Then $\mathcal A f(x_0)\leq 0$. (here the following analogy from calculus helps to remember the sign: Let $g$ be a twice differentiable function on an interval $I\subseteq\mathbb R$ which has a local maximum at $x_0$. Then $f''(x_0)\leq 0$. If, in addition, $x_0$ lies in the interior of $I$, then $f'(x_0)=0$. The analogy comes from the fact that the generator of a Feller semigroup has
a principal symbol which is differential operator of second order). In \cite{CFMT} we used the determinant $f(x)=\det(x)$ and (diagonal) boundary points $x_0\in\partial \overline S_d^+$, because $f$ vanishes precisely there.  The theory of \cite{CFMT} is more general than these notes, so it is enough to use \cite[Lemma 4.16 and Lemma 4.17]{CFMT}
to prove the assertion.
\end{proof}
\begin{remark}\rm
Note that when $\alpha=0$ then we have a deterministic motion, because then we have that
\[
\mathbb E[e^{-\tr(uX_t)}\mid X_0=x]=e^{-\tr(u\omega_t^\beta(x))},
\]
i.e.,
$X_t=\omega_t^\beta(x)$. From the Wishart SDE point of view, we clearly have
\[
\dot X(t)=\beta X+X\beta^\top,\quad X_0=x.
\]
In that case, $p$ can be anything but is superfluous.
\end{remark}
\subsection*{Wishart processes are Wishart semimartingales}
So far we did not need to be specific about the realization of Wishart processes as stochastic processes; we only looked at the Markovian transition function. In order to relate Wishart processes and Wishart semimartingales, we consider for each initial state $x$, an associated (to the Wishart transition function) Markov process $X$
on a filtered probability space $(\Omega,\mathcal F,\mathcal F_t,\mathbb P^x)$. Such realizations
exist and are well known. We repeat in the following a little the definitions for Markov transition functions and a canonic construction of the associated stochastic process, which is then called Markov. In the end of the section we prove that every Wishart process is a Wishart semimartingale.

A (suitably measurable) family of probability laws $t\mapsto (p_t(x,d\xi))$  on $\overline S_d^+$, indexed by $t\geq 0$, $x\in \overline S_d^+$,
is called Markovian transition function, if $p_{0}(x,d\xi)=\delta_x(d\xi)$ (the unit mass at $x$) and it satisfies the Chapman-Kolmogorov equations
\begin{equation}\label{eq: CKE}
p_{t+s}(x, A)=\int p_s(\xi,A)p_t(x,d\xi),\quad s,t\geq 0.
\end{equation}
Note that using the function $f_A(x)=1_{A}(x)$ (the indicator function on the Borel set $A$), we can write
\ref{eq: CKE} equivalently in semigroup form
\[
P_{t+s} f_A(x)=P_t(P_s f_A))(x),
\]
where the action $P_t$ is defined above in eq.~\eqref{markov semi}. We have therefore shown
the Chapman-Kolmogorov equations for the continuous functions $f_u$ in Lemma \ref{wish is mark}
and that's enough by some density argument to ensure \eqref{eq: CKE}.

Now by \cite[Theorem 1.1]{EthierKurtz}, for any initial distribution $\nu(d\xi)$ on $\overline S_d^+$
there exists a stochastic process $X$ on a filtered probability space $(\Omega,\mathcal F,\mathcal F_t,\mathbb P)$ whose finite-dimensional distributions fulfil
\begin{align*}
&\mathbb P[X(0)\in A_0, X(t_1)\in A_1,\dots, X(t_n)\in A_n]\\&\qquad=\int_{A_0}\int_{A_1}\dots \int_{A_n} p_{t_n-t_{n-1}}(y_{n-1}, dy_n)p_{t_{n-1}-t_{n-2}}(y_{n-2}, dy_{n-1})\dots p_{t_1}(y_0,
dy_1) \nu(dy_0)
\end{align*}
This construction is ``canonical'' in that $\Omega=(\overline S_d^+)^{[0,\infty)}$ (i.e. the space of all possible paths with values in $\overline S_d^+$), the process
is just given by the projections onto the $t$-th coordinate, that is
\[
X_t(\omega)=\omega(t),\quad \omega\in\Omega
\]
and the sigma algebra is given by the product sigma algebra
\[
\mathcal F=\mathcal B(\overline S_d^+)^{[0,\infty)}.
\]
The filtration is generated by the projections $X_t$:
\[
\mathcal F_t=\sigma(X_s\mid 0\leq s\leq t).
\]

Starting at $\nu(x)=\delta_x(d\xi)$,  where $x\in \overline S_d^+$, we denote the associated probability measure $\mathbb P$ by $\mathbb P^x$. Since $\Omega,\mathcal F$ and $\mathcal F_t$
where independent of the initial law, we have constructed a family of stochastic processes $(\Omega,\mathcal F,\mathcal F_t,\mathbb P^x)$
which satisfy the Markov property for all bounded Borel measurable functions $f$,
\[
\mathbb E^x[f(X_{t+s})\mid \mathcal F_s]=\int_{\overline S_d^+} f(\xi)p_t(X_s,d\xi)=\mathbb E^{X_s}[f(X_t)]
\]
which holds $\mathbb P^x$ a.s., and for all $t,s\geq 0$. Here we use $\mathbb E^x$ to denote the expecation operator with respect to $\mathbb P^x$.

This is equivalent to the more intuitive statement
\[
\mathbb P^x[X_{t+s}\in A\mid \mathcal F_t]=\mathbb P^{X_t} [X_{t+s}\in A].
\]
By Lemma \ref{wish is mark} we also know that $X$ is a Feller process (this is a Markov process with
a Feller semigroup), which implies in view of \cite[Theorem III. 2.7]{RevuzYor} that $X$ admits a cadlag modification. That means for each $x\in \overline S_d^+$, we have that
the probability law $\mathbb P^x$ is actually concentrated on the space of paths which are continuous from the right and have limits from the left. Our aim is
to show that for each $x\in \overline S_d^+$, the process $X_t$ is a Wishart semimartingale. That is continuous, as we know, which will follow a little indirect:
\begin{proposition}\label{prop: Wishart p are Wishart sem}
Let  $X$ be a Wishart process. Then for each $x$, $X$ is a
Wishart semimartingale on $(\Omega,\mathcal F,\mathcal F_t,\mathbb P^x)$.
\end{proposition}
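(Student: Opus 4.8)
The plan is to exploit the martingale problem associated with the Feller generator $\mathcal A$ computed in \eqref{eq: form of gen}. For a Feller process and any $f\in\mathcal D(\mathcal A)$, Dynkin's formula guarantees that $N^f_t:=f(X_t)-f(X_0)-\int_0^t\mathcal Af(X_s)\,ds$ is a $\mathbb P^x$-martingale. Since the coordinate functions and their products are unbounded, I would apply this identity in localized form, using the stopping times $\tau_n:=\inf\{t\ge 0:\ \tr(X_t)\ge n\}$; the requisite integrability is supplied by Lemma \ref{lem wish semi is wish dist} and the explicit Laplace transform \eqref{eq: FLT Bru}, which show that each $X_t$ is Wishart distributed and hence possesses moments of every order under $\mathbb P^x$.

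First I would establish that, under $\mathbb P^x$, the c\`adl\`ag modification furnished by the Feller property is in fact continuous. For this I would derive short-time moment bounds directly from the Wishart law $\Gamma(p,\omega^\beta_t(x);\sigma^\beta_t(\alpha))$: since $\sigma^\beta_t(\alpha)=2\int_0^t e^{\beta s}\alpha e^{\beta^\top s}\,ds=O(t)$ and $\omega^\beta_t(x)=e^{\beta t}xe^{\beta^\top t}=x+O(t)$ as $t\downarrow 0$, a computation of the first few moments of the Wishart distribution yields an estimate of the form $\mathbb E^x[\tr(X_t-x)^{2k}]\le C_k\,t^k$ for suitable $k$. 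The Kolmogorov--Chentsov criterion then produces a H\"older-continuous modification, which must coincide with the c\`adl\`ag one; hence $X$ has continuous paths almost surely. Equivalently, one may invoke the absence of an integral term in \eqref{eq: form of gen}: the generator is a pure second-order differential operator, so the process has no jumps.

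Next I would identify the drift. Applying the localized martingale property to the coordinate function $f_{ij}(x)=x_{ij}$, whose Hessian vanishes, \eqref{eq: form of gen} gives $\mathcal Af_{ij}(x)=(\beta x+x\beta^\top+2p\alpha)_{ij}$ with $\alpha=Q^\top Q$. Thus $M_{t,ij}:=X_{t,ij}-x_{ij}-\int_0^t(\beta X_s+X_s\beta^\top+2p\alpha)_{ij}\,ds$ is a local martingale, so $X_t=x+D_t+M_t$ with $D_t=\int_0^t(2p\alpha+\beta X_s+X_s\beta^\top)\,ds$, and $M_t$ is continuous with $M_0=0$ since both $X$ and $D$ are. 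This is exactly the decomposition required by the definition of a Wishart semimartingale.

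Finally I would pin down the quadratic variation by applying the martingale property to the quadratic functions $g_{ijkl}(x)=x_{ij}x_{kl}$. Evaluating \eqref{eq: form of gen} and using the symmetry of $x$ and $\alpha$, the second-order part contributes precisely $A_{ijkl}(x)$, while the first-order part contributes the product-rule terms $x_{ij}(\beta x+x\beta^\top+2p\alpha)_{kl}+x_{kl}(\beta x+x\beta^\top+2p\alpha)_{ij}$. On the other hand, since $X$ is now a continuous semimartingale, It\^o's formula expresses the drift of $X_{t,ij}X_{t,kl}$ as these same product-rule terms plus $\frac{d}{dt}[M_{ij},M_{kl}]_t$. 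Matching the two expressions for the compensator forces $[M_{t,ij},M_{t,kl}]=\int_0^t A_{ijkl}(X_s)\,ds$, which is exactly \eqref{Wish q var}. The main obstacle throughout is the unboundedness of the test functions $f_{ij}$ and $g_{ijkl}$: the martingale problem is guaranteed only on $C_b^\infty$, so all of the above must be carried out under the stopping times $\tau_n$ and the localized identities then passed to the limit $n\to\infty$, a step that is legitimate precisely because of the uniform moment control coming from the Wishart marginals.
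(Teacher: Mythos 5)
Your proposal is correct in substance, but it takes a genuinely more hands-on route than the paper. Both arguments start from the same point: the Feller property yields the Dynkin martingales $f(X_t)-f(x)-\int_0^t\mathcal Af(X_s)\,ds$. The paper, however, applies this only to the \emph{bounded} exponentials $f_u(x)=e^{-\tr(ux)}$ (via \cite[Proposition VII.1.6]{RevuzYor}) and then invokes \cite[Theorem II.2.42]{JacodShiryaev}, which in one stroke identifies $X$ as a semimartingale whose characteristics (drift, quadratic variation, jump compensator) are read off from the form \eqref{eq: form of gen} of $\mathcal A$; continuity follows because the jump compensator vanishes. You instead reconstruct the decomposition by hand: continuity from Kolmogorov--Chentsov moment estimates, the drift from the martingale problem applied to the coordinate functions $f_{ij}$, and the quadratic variation from the quadratic monomials $g_{ijkl}$ together with uniqueness of the compensator of a special semimartingale. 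What the paper's route buys is that every test function is bounded, so no localization is ever needed; what your route buys is self-containedness, avoiding the Jacod--Shiryaev machinery at the price of the localization and moment control you describe. Two details in your version deserve care: (i) Kolmogorov--Chentsov requires two-time increment bounds $\mathbb E^x[\tr((X_t-X_s)^2)^{k}]\le C_k|t-s|^{k}$, which follow from your one-time bound only after conditioning at time $s$, using the Markov property and $\sup_{s\le T}\mathbb E^x[\tr(X_s)^{2k}]<\infty$; and (ii) the integrability of the marginals should be quoted from the definition of the Wishart transition function \eqref{def: Wishart} and its Laplace transform \eqref{eq: FLT Bru}, not from Lemma \ref{lem wish semi is wish dist} --- that lemma concerns Wishart \emph{semimartingales}, which is exactly what is being proved here, so citing it would be circular (harmlessly so, since the fact you need is definitional for a Wishart process).
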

\begin{proof}
Since $X$ is a Feller process, we have by \cite[Proposition VII. 1.6]{RevuzYor}
that for any $f_u(x)=\exp(-\tr(ux))$
\[
M_t^u:= f_u(X_t)-f_u(x)-\int_0^t \mathcal A f_u(X_s)ds
\]
is an $(\mathcal F_t,\mathbb P^x)$--martingale. Hence by \cite[Theorem II.2.42]{JacodShiryaev}
we have that $X$ is a $(\Omega,\mathcal F,\mathcal F_t,\mathbb P)$--semimartingale, associated
to the generator $\mathcal A$. The continuity of $X$ follows from the lack of a jump-component in
the generator (that is the compensator of the jumps of $X$ vanishes). As quadratic variation and drift component are evident from the specific form of the generator, we see that $X$ is a Wishart semimartingale on $(\Omega,\mathcal F,\mathcal F_t,\mathbb P)$.
\end{proof}

\section{Boundary non-attainment}\label{MCKean argument}
Suppose now that $X_0=x$ is positive definite in \eqref{wishart full class}. In view of the standard existence and uniqueness result for
SDEs--the square root is analytic, hence locally Lipschitz on $S_d^+$--there exists a unique strong solution of the Wishart SDE as long as $X_t$ does not hit the  boundary. We call this time
\[
T_x:=\inf\{t>0\mid \det(X)=0\}
\]
the first hitting time of the boundary. Of course when $T_x=\infty$, unique strong solutions of the Wishart SDE
are guaranteed. This is particular the case, when $p$ is large enough.
\begin{theorem}\label{not hitting}
Suppose $p\geq\frac{d+1}{2}$. Then $T_x=\infty$ almost surely.
\end{theorem}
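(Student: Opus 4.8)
The plan is to imitate, at the level of the determinant, the one-dimensional argument already carried out in the proof of Lemma~\ref{wish sde is wishart semi}: replace the scalar $X_t$ by $\det X_t$ and study the process $\log\det X_t$. On the stochastic interval $[0,T_x)$ the (unique strong) solution stays in $S_d^+$, so $\log\det$ is smooth along the path and It\^o's formula applies. To be rigorous I would first localize with the stopping times $T_n:=\inf\{t>0\mid \det X_t<1/n\}$, which increase to $T_x$ and on each of which $X_t^{-1}$ is bounded, and pass to the limit at the end.

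First I would compute the action of the generator $\mathcal A$ from \eqref{eq: form of gen} on $f(x)=\log\det x$. Using $\nabla f(x)=x^{-1}$ and $\partial^2_{x_{ij}x_{kl}}\log\det x=-(x^{-1})_{jk}(x^{-1})_{li}$, the first-order part gives $\tr((\beta x+x\beta^\top+2p\alpha)x^{-1})=2\tr(\beta)+2p\,\tr(\alpha x^{-1})$, where $\alpha=Q^\top Q$. Contracting the diffusion tensor $A_{ijkl}(x)$ against $-(x^{-1})_{jk}(x^{-1})_{li}$ and adding up the four summands of $A_{ijkl}$ yields $-(d+1)\,\tr(\alpha x^{-1})$ for the second-order part, so that
\[
\mathcal A(\log\det)(x)=2\tr(\beta)+(2p-d-1)\,\tr(\alpha x^{-1}).
\]
This identity is the crux of the argument and the place where the threshold $p=\tfrac{d+1}{2}$ is born; the computation is mechanical, but the fourfold cancellation producing exactly the factor $d+1$ must be tracked carefully. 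As a sanity check, for $d=1$ it reduces to the coefficient $b-\sigma^2/2=2(p-1)\alpha/x$ appearing in the proof of Lemma~\ref{wish sde is wishart semi}.

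Next I would exploit positivity. Since $\alpha,x^{-1}\in\overline S_d^+$ we have $\tr(\alpha x^{-1})\geq 0$, and the hypothesis $p\geq\frac{d+1}{2}$ makes $2p-d-1\geq 0$, so the drift of $\log\det X_t$ is bounded below by the constant $2\tr(\beta)$. Writing $\log\det X_t=\log\det x+A_t+N_t$, with $A_t=\int_0^t\mathcal A(\log\det)(X_s)\,ds\geq 2\tr(\beta)\,t$ and $N_t$ the continuous local martingale coming from the Brownian terms, the additive part $\log\det x+A_t$ stays bounded below on every finite interval.

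Finally I would argue by contradiction. Suppose $\mathbb P(T_x<\infty)>0$. By continuity of $X$, on $\{T_x<\infty\}$ we have $\det X_t\to 0$, hence $\log\det X_t\to-\infty$, as $t\uparrow T_x$; since $\log\det x+A_t$ stays bounded below on $[0,T_x]$, this forces $N_t\to-\infty$. But a continuous local martingale cannot tend to $-\infty$ in finite time: by the Dambis--Dubins--Schwarz theorem $N_t=W_{[N]_t}$ for a Brownian motion $W$, so either $[N]_{T_x}<\infty$ and $N$ has a finite limit, or $[N]_{T_x}=\infty$ and $N_t$ oscillates with $\liminf_{t\uparrow T_x}N_t=-\infty$ and $\limsup_{t\uparrow T_x}N_t=+\infty$ by recurrence of Brownian motion. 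In neither case does $N_t\to-\infty$, which is the desired contradiction; hence $T_x=\infty$ almost surely. The main obstacle is not a single hard estimate but keeping the two halves honest at once: pinning down the determinant generator precisely enough that the critical exponent $\tfrac{d+1}{2}$ emerges, and correctly phrasing the ``no finite-time escape to $-\infty$'' step for a local martingale whose quadratic variation may or may not blow up at $T_x$.
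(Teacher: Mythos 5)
Your proof is correct and takes essentially the same route as the paper: there, Itô's formula is applied to $h(Z_t)=\log\det(e^{-\beta^\top t}X_te^{-\beta t})$ (the conjugation by $e^{-\beta t}$ removes exactly the $2\tr(\beta)$ term that you instead bound below on finite intervals), giving the decomposition $h(Z_t)=h(Z_0)+M_t+P_t$ with $P_t=\int_0^t (2p-(d+1))\tr(Q^\top Q\,X_s^{-1})\,ds\geq 0$ for $p\geq\frac{d+1}{2}$, followed by McKean's argument (Proposition \ref{prop MCKean}) that a continuous local martingale cannot tend to $-\infty$ in finite time. Your generator computation producing the coefficient $2p-d-1$ agrees with the paper's $P_t$, and your closing dichotomy for $N_t$ is precisely the lemma preceding Proposition \ref{prop MCKean}.
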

This is a special case of \cite[Corollary 3.2]{pfaffel} but written in
the notation of \cite[chapter 6]{CFMT}. The motivation for this result has been the introductory work of \cite{bru} for Wishart processes
with $Q=I$ and $\beta=0$ (for more detailed comparison with Bru's work, see
 \cite[Proposition 3.1]{pfaffel}). It should be noted that this is a result concerning
the support of Wishart semimartingales, and the existence of strong solutions is a mere by-product of the latter.
For affine jump-diffusions on symmetric cones, the corresponding result is \cite[Proposition 6.1]{CKMT}.

A random time $T:\Omega\rightarrow \mathbb R_+$ is a random variable taking non-negative values.
$T$ is called a stopping time, if the sets $\{T\leq t\}$ are measurable with respect to $\mathcal F_t$.
In our context $T=T_x$ will always be the first hitting time of solutions to Wishart SDEs
of the boundary $\partial \overline S_d^+$. $[0,T)$ is called stochastic interval. A local martingale $M_t$  on
the stochastic interval $[0,T)$ is a stochastic process for which there exists an a.s. strictly increasing sequence $T_n\uparrow T$
such that for each $n$, the stopped process $M_{t\wedge T_n}$ is an $\mathcal F_t$--martingale.

\subsection*{MCKean's argument}
This result on continuous semimartingales is fundamental for the derivation of Theorem \ref{not hitting}. To simplify the setting, we shall from now on assume that $T>0$ almost surely.
This assumption actually holds for $T_x$, because any diffusion started in the interior of some domain needs a strictly positive time
to reach its boundary.

\begin{lemma}
For a {\it continuous} local martingale on $[0,T]$ almost surely
either $\lim_{t\uparrow T} [M_t,M_t]$ exists or we have $\lim\sup_{t\uparrow T} M_t=-\lim\inf_{t\uparrow T}M_t = \infty$.
\end{lemma}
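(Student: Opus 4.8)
The plan is to reduce everything to the long-time behavior of a Brownian motion via the Dambis--Dubins--Schwarz time change. Write $A_t:=[M,M]_t$ and observe that, being the quadratic variation of a continuous process, $A$ is continuous and nondecreasing on $[0,T)$. Hence the limit
\[
A_{T^-}:=\lim_{t\uparrow T}[M,M]_t\in[0,\infty]
\]
always exists by monotonicity; what the lemma really asserts is that this limit is \emph{finite} precisely off the oscillation event. So the first thing I would do is split $\Omega$ (up to a null set) into $C:=\{A_{T^-}<\infty\}$ and $D:=\{A_{T^-}=\infty\}$, and show that on $C$ the limit exists finitely while on $D$ one has $\limsup_{t\uparrow T}M_t=+\infty$ and $\liminf_{t\uparrow T}M_t=-\infty$. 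Since replacing $M$ by $M-M_0$ alters neither $A$ nor the finiteness of the $\limsup$/$\liminf$, I may assume $M_0=0$.

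Next I would introduce the time change $\tau_s:=\inf\{t\in[0,T):A_t>s\}$ and invoke Dambis--Dubins--Schwarz: on a (possibly enlarged) space there is a standard Brownian motion $B$ with
\[
M_t=B_{A_t},\qquad t\in[0,T).
\]
On $D$ the clock $A_t$ runs all the way to $+\infty$, so $B$ sweeps out the whole half-line; on $C$ the clock halts at the finite value $A_{T^-}$, and there one only uses that $B$ is Brownian up to that time.

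The conclusion then rests on two standard facts about Brownian paths. On $C$, since $A_t\uparrow A_{T^-}<\infty$ as $t\uparrow T$ and $s\mapsto B_s$ is continuous, the composition $M_t=B_{A_t}$ converges to the finite value $B_{A_{T^-}}$; thus $\lim_{t\uparrow T}M_t$ exists and, in particular, $\lim_{t\uparrow T}[M,M]_t=A_{T^-}$ exists, which is the first alternative. On $D$ I would use the oscillation of Brownian motion at infinity, $\limsup_{s\to\infty}B_s=+\infty=-\liminf_{s\to\infty}B_s$ almost surely (by recurrence, or the law of the iterated logarithm); since $A_t\to\infty$ as $t\uparrow T$, these transfer to $M_t=B_{A_t}$ and give exactly the second alternative.

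The point requiring care — and the main obstacle — is the enlargement in Dambis--Dubins--Schwarz, needed to realize $B$ as a bona fide Brownian motion on all of $[0,\infty)$ when $A_{T^-}$ can be finite with positive probability. Once that extension is in place, however, the apparent difficulty of conditioning on $C$ versus $D$ evaporates: the oscillation $\limsup_{s\to\infty}B_s=+\infty=-\liminf_{s\to\infty}B_s$ is an \emph{unconditional} almost-sure property of the extended $B$, which I would simply read off on the event $D$, where the native clock already exhausts $[0,\infty)$; on $C$ the clock stays in the bounded range $[0,A_{T^-}]$, so the long-time behavior of $B$ is irrelevant and continuity alone yields convergence. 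The remaining ingredients — monotone existence of $A_{T^-}$, continuity of Brownian paths, and the $\pm\infty$ oscillation at infinity — are entirely standard.
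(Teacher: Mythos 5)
Your proof is correct and follows essentially the same route as the paper: a Dambis--Dubins--Schwarz time change that turns $M$ into a Brownian motion run at the clock $[M,M]_t$, followed by the almost-sure oscillation of Brownian paths at infinity on the event where the clock diverges. The only cosmetic difference is that the paper performs the time change directly on the event $\{\lim_{t\uparrow T}[M,M]_t=\infty\}$ via L\'evy's characterization, whereas you invoke the full DDS theorem with an enlargement so as to also cover the finite-clock event, where the dichotomy is anyway immediate by monotonicity.
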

One way of obtaining this result is by performing a time-change $T_t$ on $A:=\{\lim_{t\uparrow T} [M_t,M_t]=\infty\}$ such that
$M_{T_t}$ becomes a continuous local martingale(on $A$) with quadratic variation $t$. Then by L\'evy's characterization of Brownian motion $M_{T_t}$ is a Brownian motion on $A$, hence we just need
to use the pathwise properties of Brownian motion--that a.s. oscillates infinitely often between $-\infty$
and $\infty$, as $t\rightarrow\infty$. The appropriate time change is $T_t:=\inf\{s>0\mid [M_s,M_s]>t\}$.

A stripped-down version of MCKean's argument is the following. A more general formulation may be found in \cite[Proposition 4.3]{pfaffel}:
\begin{proposition}\label{prop MCKean}
Let $Z$ be a continuous adapted stochastic process on a stochastic interval $[0,T)$ such that $Z_0>0$
a.s., and $T:=\inf\{t>0\mid Z_s=0\}$. Suppose $h:\mathbb R_+\setminus\{0\}\rightarrow \mathbb R$
satisfies the following
\begin{enumerate}
\item for $t<T$ we have $h(Z_t)=h(Z_0)+M_t+P_t$, where $P$ is a non-negative process and $M$ is a continous local martingale  on $[0,T)$.
\item $\lim_{z\downarrow 0} h(z)=-\infty$
\end{enumerate}
Then $T=\infty$ a.s.
\end{proposition}
\begin{proof}
As a consequence of the assumptions $h(Z_t)\downarrow -\infty$ as $t\uparrow T$.
Since $P$ is non-negative, we have that  $M_t\downarrow -\infty$ as $t\uparrow T$. But $M$ is a continuous local martingale on $[0,T)$. In view of
the preceding lemma this is only possible, when $T=\infty$.
\end{proof}

Now we shortly sketch the proof of Theorem \ref{not hitting}. All the details can be found
in an old (and unpublished) version of the paper \cite{pfaffel} on \cite[pp. 5--7]{pfaffelold}. They base on a few more Lemmas.
\begin{proof}
We define for $t\in [0, T_x)$
\[
Z_t:=\det(e^{-\beta^\top t}X_t e^{-\beta t}),\quad h(z)=\log(z),
\]
then after application of It\^o's formula \cite[Lemma 4.7]{pfaffel} and some lines of calculations we obtain
\[
h(Z_t)=h(Z_0)+M_t+P_t,
\]
where
\[
M_t=2\int_0^t\sqrt{\tr(X_s^{-1} Q^\top Q)}dW_s
\]
and
\[
P_t=\int_0^t\tr((2p-(d+1)Q^\top Q)X_s^{-1})ds,
\]
where $W$ is a one-dimensional standard Brownian motion. Hence $M$ is a continuous local martingale on $[0,T_x]$ and $P$ is non-negative. Proposition
\ref{prop MCKean} can be applied and yields $T_x=\infty$.
\end{proof}
\subsection*{Hitting the boundary}
The following shows that Theorem \ref{not hitting} does not hold under weaker conditions:
\begin{lemma}\label{lem definite hit}
Let $\beta, Q$ be $d\times d$ matrices, and suppose $Q\neq 0$. When $p<\frac{d-1}{2}$, then there exists $x\in \overline S_d^+$ such that $T_x<\infty$ with positive probability.
\end{lemma}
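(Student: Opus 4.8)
\emph{Strategy.} The plan is to run the determinant argument of Theorem \ref{not hitting} in reverse. For $t<T_x$ put $Z_t:=\det(e^{-\beta^\top t}X_te^{-\beta t})$; since $e^{\pm\beta t}$ is invertible, $Z_t$ vanishes exactly when $\det X_t$ does, so $T_x=\inf\{t>0:Z_t=0\}$. The proof of Theorem \ref{not hitting} furnishes the exact decomposition $\log Z_t=\log Z_0+M_t+P_t$ with $M_t=2\int_0^t\sqrt{\tr(Q^\top QX_s^{-1})}\,dW_s$ and $P_t=(2p-(d+1))\int_0^t\tr(Q^\top QX_s^{-1})\,ds$. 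For $2p\ge d+1$ the drift $P$ is nonnegative and McKean's dichotomy (Proposition \ref{prop MCKean}) forbids boundary attainment; the whole point is that for $2p<d-1$ one has $2p-(d+1)<-2$, so $P$ is a strictly negative drift dragging $\log Z_t$ towards $-\infty$.

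\emph{The effective squared Bessel process.} Applying It\^o's formula to $Z=\exp(\log Z)$ and using $d\langle M\rangle_t=4\tr(Q^\top QX_t^{-1})\,dt$, the logarithmic drift coefficient $2p-(d+1)$ is raised by the It\^o correction $+2$, and $Z$ itself obeys
\[
dZ_t=(2p-(d-1))\,\tr(Q^\top QX_t^{-1})\,Z_t\,dt+Z_t\,dM_t,
\]
with $d\langle Z\rangle_t=4\tr(Q^\top QX_t^{-1})Z_t^2\,dt$. Near a boundary point of rank $d-1$ the inverse $X_t^{-1}$ is dominated by its smallest-eigenvalue direction $u$, so $\tr(Q^\top QX_t^{-1})\sim\kappa/Z_t$ for a locally bounded $\kappa=|Qu|^2\prod_{j<d}\lambda_j>0$, and the display collapses to $dZ_t\approx(2p-(d-1))\kappa\,dt+2\sqrt{\kappa}\sqrt{Z_t}\,dW_t$. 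Thus $Z$ behaves like a squared Bessel process of dimension
\[
\delta:=2p-(d-1),
\]
the shift $-(d-1)$ being the repulsion of the remaining $d-1$ eigenvalues, the very combination that produced the drift condition in Theorem \ref{enigmatic theorem}. Since $2p<d-1$ forces $\delta<0$, and a squared Bessel process of negative dimension is absorbed at $0$ in finite time, this is the mechanism that drives $X$ to the boundary.

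\emph{From heuristic to positive probability.} I would make this rigorous by a suitable choice of $x$ together with localization. Pick $x\in S_d^+$ whose smallest eigenvalue is simple, small, and has eigenvector $u$ with $Qu\neq0$ (possible since $Q\neq0$), and whose other $d-1$ eigenvalues lie in a compact interval bounded away from $0$. Let $\tau$ be the first time either those $d-1$ eigenvalues leave that interval or $Z$ leaves $(0,\eta)$. On $[0,\tau)$ the factor $\kappa$ above stays in a fixed range $[\kappa_-,\kappa_+]$, so after a bounded time change the drift of $Z$ is $\le(\delta+\varepsilon)$ and its diffusion coefficient equals that of a genuine squared Bessel process; choosing $\varepsilon$ with $\delta+\varepsilon<0$, a one-dimensional comparison theorem dominates $Z$ from above by such a process. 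The dominating process is absorbed at $0$ in finite time almost surely, and on the positive-probability event that this absorption occurs before $\tau$, the nonnegative $Z$ is squeezed to $0$, whence $T_x\le\tau<\infty$.

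\emph{Main obstacle.} The difficulty is entirely in the localization and comparison, not in the heuristic. The coefficient $\tr(Q^\top QX_s^{-1})$ depends on the whole matrix, not on $Z_s$ alone, so the reduction to an autonomous squared Bessel equation is legitimate only while the other $d-1$ eigenvalues stay controlled; for general $Q$ (Bru's eigenvalue calculus is written for $Q=I$) and general $\beta$ one must check that the spectral gap survives long enough, that the smallest eigenvalue does not collide with the others, and that the drift inequality is oriented so as to force, not merely allow, absorption. The hypothesis $2p<d-1$, i.e.\ $\delta<0$, is exactly what provides the margin: the leading near-boundary drift of $Z$ is then strictly negative, so the lower-order contributions of $\beta$ and of the comparison error can be absorbed into $\varepsilon$ without endangering the inequality $\delta+\varepsilon<2$; in the intermediate range $d-1\le 2p<d+1$ this margin is lost and a finer argument (the stronger Lemma \ref{stronger lem definite hit}) is required.
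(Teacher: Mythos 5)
Your strategy is genuinely different from the paper's, and as it stands it has a real gap. For the record, the paper's own proof is a four-line soft argument: assume for contradiction that $T_x=\infty$ for \emph{every} $x\in\overline S_d^+$; then every solution of the Wishart SDE is a global Wishart semimartingale (Lemma \ref{wish sde is wishart semi}), hence $X_t\sim\Gamma(p,\omega_t^\beta(x);\sigma_t^\beta(\alpha))$ for all $t$ and $x$ (Lemma \ref{lem wish semi is wish dist}), so these laws constitute a Wishart transition function, i.e.\ a Wishart process with parameters $(\alpha,p,\beta)$ exists; Theorem \ref{enigmatic theorem} then forces $p\geq\frac{d-1}{2}$, a contradiction. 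No pathwise analysis near the boundary is needed, which is exactly why the paper's argument works for arbitrary $\beta$ and arbitrary $Q\neq 0$.

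The gap in your direct approach is the decisive ``positive probability'' step. Your It\^o computation for $Z$ is fine, and in fact you underuse it: time-changing by the additive functional $A_t=\int_0^t\tr(Q^\top QX_s^{-1})Z_s\,ds$ turns $Z$ \emph{exactly} into a squared Bessel process of dimension $\delta=2p-(d-1)$ on $[0,T_x)$ (this is essentially Bru's identity), with no near-boundary approximation and no comparison theorem. Your comparison step, by contrast, is not well posed: the random factor $c_t=\tr(Q^\top QX_t^{-1})Z_t$ multiplies both the drift and the \emph{squared} diffusion coefficient, so you cannot bound the drift by $\delta+\varepsilon$ while claiming the diffusion coefficient ``equals that of a genuine squared Bessel''; you also waver between requiring $\delta+\varepsilon<0$ and $\delta+\varepsilon<2$. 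The time-changed formulation exposes the true difficulty: $T_x<\infty$ happens precisely when the clock $A$ accumulates the absorption time $S_0$ of the squared Bessel process before stalling, and nothing a priori prevents $A_\infty\leq S_0$ (e.g.\ $\tr(Q^\top QX_t^{-1})Z_t\to 0$ while $Z$ stays positive). Your localization is the natural attempt to rule this out, but you never prove $\mathbb P(\text{absorption before }\tau)>0$: the times $S_0$ and $\tau$ are functionals of the same driving noise, so you cannot multiply probabilities, and shrinking $Z_0$ to make absorption fast changes the law of the whole matrix process, so you would need bounds on $\mathbb P(\tau\leq t_0)$ that are uniform over the family of initial conditions, plus control keeping $|Qu|$ bounded below when $Q$ is singular. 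None of this is supplied, and it is not routine: the paper itself remarks right after this lemma that boundary-attainment results for general $\beta$ and $Q$ ``seem not to be known yet'' and formulates them as conjectures --- that unknown part is exactly the part your program leaves open. (A final, minor point: your closing remark misreads Lemma \ref{stronger lem definite hit}; it also assumes $p<\frac{d-1}{2}$ and strengthens the \emph{conclusion}, it does not treat the intermediate range $\frac{d-1}{2}\leq p<\frac{d+1}{2}$.)
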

\begin{proof}
Assume, for a contradiction, that for all $x\in \overline S_d^+$ we have that $T_x=\infty$. By Lemma \ref{wish sde is wishart semi} any solution $X_t$ of the Wishart SDE is a Wishart semimartingale. And by Lemma \ref{lem wish semi is wish dist} we have that $X_t\sim \Gamma(p, \omega_t^\beta(x);\sigma_t^\beta(\alpha))=:p_t(x,d\xi)$ where $\alpha=Q^\top Q$. By definition $(p_t(x,d\xi))_{t\geq 0, x\in \overline S_d^+}$ is a Wishart process. By Theorem \ref{enigmatic theorem} we must have  $p\geq\frac{d-1}{2}$, a contradiction.
\end{proof}
For a similar and partially stronger result see Lemma \ref{stronger lem definite hit} below.

In the case that $\beta=0$ and $Q=I$, \cite[Theorem 1.4]{matsumoto} asserts that the boundary is hit in finite time, when $p\in (\frac{d-1}{2},\frac{d+1}{2})$. A similar result
including general $\beta$ or $Q\neq 0$ seems not to be known yet. However, we conjecture
\begin{conjecture}
Let $\beta, \,Q$ be arbitrary $d\times d$ matrices, and let $p<\frac{d+1}{2}$. Any solution of the Wishart SDE with initial condition $X_0=x\in S_d^+$
hits the boundary in finite time, that is $\mathbb P(T_x<\infty)>0$.
\end{conjecture}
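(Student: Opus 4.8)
Since the statement is recorded as a conjecture, I present a strategy rather than a finished proof. The plan is to split the parameter $p<\tfrac{d+1}{2}$ into three regimes and to exploit the explicit marginal law from Lemma~\ref{lem wish semi is wish dist} wherever possible. The basic observation, valid throughout, is that because $X_0=x\in S_d^+$ and the paths of $X$ are continuous, the event $\{\det X_t=0\}$ is contained in $\{T_x\le t\}$, so $\mathbb P(T_x<\infty)\ge \mathbb P(\det X_t=0)$ for every $t>0$. By Lemma~\ref{wish sde is wishart semi} and Lemma~\ref{lem wish semi is wish dist} we have $X_t\sim\Gamma(p,\omega^\beta_t(x);\sigma^\beta_t(\alpha))$ with $\alpha=Q^\top Q$. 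Whenever $p$ lies in the discrete part $\{0,1/2,\dots,(d-1)/2\}$ of $\Lambda_d$, this is a singular (low-rank) Wishart law, concentrated on $\partial\overline S_d^+=\{\det=0\}$, so $\mathbb P(\det X_t=0)=1$ and hence $T_x<\infty$ almost surely. The region $p<\tfrac{d-1}{2}$ (for $Q\neq0$, which must be assumed, since $Q=0$ yields the boundary-avoiding deterministic flow $X_t=e^{\beta t}xe^{\beta^\top t}$) I would dispatch via Lemma~\ref{lem definite hit}/Lemma~\ref{stronger lem definite hit}, upgrading ``some starting point hits'' to ``every interior starting point hits'' by an approximate-controllability (support) argument combined with the strong Markov property.

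The genuine difficulty is the open window $p\in(\tfrac{d-1}{2},\tfrac{d+1}{2})$, where the time-$t$ law is absolutely continuous on the open cone and exhibits only an integrable singularity near the boundary; then $\mathbb P(\det X_t=0)=0$ and the marginal argument is powerless, the boundary being touched in a Bessel-like manner but never charged at a fixed time. Here I would run the computation of Theorem~\ref{not hitting} in reverse. With $Z_t=\det(e^{-\beta^\top t}X_te^{-\beta t})$ and $h=\log$, the same It\^o expansion gives, on $[0,T_x)$,
\[
\log Z_t=\log Z_0+M_t+P_t,\qquad P_t=\tfrac{2p-(d+1)}{4}\,\langle M\rangle_t,\qquad \langle M\rangle_t=4\!\int_0^t\tr(X_s^{-1}Q^\top Q)\,ds,
\]
where now $2p-(d+1)<0$, so $P$ is non-increasing. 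The Dambis--Dubins--Schwarz time change $\tau_u:=\inf\{t:\langle M\rangle_t>u\}$ turns $M$ into a Brownian motion $\widetilde B_u$ on $[0,\langle M\rangle_{T_x})$, and there
\[
\log Z_{\tau_u}=\log Z_0+\widetilde B_u+\tfrac{2p-(d+1)}{4}\,u,
\]
a Brownian motion with strictly negative drift. On $\{T_x=\infty\}$ one checks that $\langle M\rangle_{T_x}=\infty$ (as long as $X$ stays bounded, $\tr(X_s^{-1}Q^\top Q)\ge\|X_s\|^{-1}\tr(Q^\top Q)$, so the clock grows at least linearly), whence $\log Z_{\tau_u}\to-\infty$, i.e.\ $\liminf_t\det X_t=0$.

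To convert this into genuine attainment I would attempt to reduce to the solved case $\beta=0$, $Q=I$ of \cite[Theorem~1.4]{matsumoto}. Two boundary-preserving reductions are available: for invertible $Q$ a change of variables $X\mapsto gXg^\top$ maps the Wishart SDE to another Wishart SDE and normalises $Q^\top Q$ to the identity at the cost of replacing $\beta$ by $g\beta g^{-1}$; and the time-dependent transformation $X_t\mapsto e^{-\beta t}X_te^{-\beta^\top t}$ removes $\beta$ entirely at the cost of a time-dependent but, on any finite horizon $[0,N]$, uniformly elliptic coefficient $\alpha_t=e^{-\beta t}e^{-\beta^\top t}$ with $c_1I\le\alpha_t\le c_2I$. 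One is thus reduced to a driftless, non-degenerate, bounded-coefficient equation, whose boundary behaviour I would try to sandwich against Matsumoto's autonomous process through Bru's eigenvalue system, in which the smallest eigenvalue behaves near $0$ like a squared Bessel process of dimension below $2$ precisely when $p<\tfrac{d+1}{2}$.

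The main obstacle is exactly this last step: promoting $\liminf_t\det X_t=0$ to attainment at a finite $T_x$. The negative-drift argument only forces the determinant to zero along the internal clock, which may be exhausted as $t\to\infty$ rather than at a finite time, and ruling out this asymptotic-approach scenario -- especially when $\beta$ has eigenvalues with positive real part and $X$ may escape to infinity -- demands genuine local control of the eigenvalue dynamics near the singular stratum. For general $\beta$ and $Q$ the repulsion terms of Bru's system acquire perturbations that destroy the clean squared-Bessel comparison of \cite{matsumoto}, and matrix-valued comparison theorems are delicate; a rigorous comparison, or a localisation confining $X$ to a compact neighbourhood of a boundary stratum long enough for the Bessel mechanism to act, appears to be the crucial missing ingredient, which is why the statement is recorded here only as a conjecture.
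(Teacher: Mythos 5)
You should first note the ground truth here: the statement you were asked to prove is recorded in the paper as a \emph{Conjecture}. The paper offers no proof of it --- the only case cited as settled is $\beta=0$, $Q=I$, via \cite[Theorem 1.4]{matsumoto} --- so your submission can only be judged as a research plan, which is how you honestly present it. Within the easy regimes your plan is broadly sound but has one logical defect: the inequality $\mathbb P(T_x<\infty)\geq \mathbb P(\det X_t=0)$ together with the appeal to Lemma \ref{lem wish semi is wish dist} presupposes a solution defined at time $t$ on all of $\Omega$, whereas Lemmas \ref{wish sde is wishart semi} and \ref{lem wish semi is wish dist} apply to Wishart semimartingales, i.e.\ \emph{global} solutions; for $p$ in the discrete part of $\Lambda_d$ the solution is a priori defined only on $[0,T_x)$. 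The argument must be run by contradiction, exactly as in Lemma \ref{lem definite hit} and Lemma \ref{stronger lem definite hit}: assume $T_x=\infty$ a.s., deduce that the marginal law is a Wishart distribution whose parameters are forbidden (by Theorem \ref{th: maintheorem}, Theorem \ref{th: letac11}, or by incompatibility of a boundary-concentrated law with continuous paths started in $S_d^+$), and conclude $\mathbb P(T_x<\infty)>0$. This gives positive probability --- which is all the conjecture asks --- but \emph{not} the ``almost surely'' you claim, since conditioning on $\{T_x=\infty\}$ destroys the local martingale structure. Your observation that $Q=0$ must be excluded (the deterministic flow $e^{\beta t}xe^{\beta^\top t}$ never hits the boundary) is correct and worth recording.

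The genuine gap is where you say it is, in the window $p\in(\frac{d-1}{2},\frac{d+1}{2})$, and it is twofold. Your reversed McKean computation is correct: with the paper's $M_t=2\int_0^t\sqrt{\tr(X_s^{-1}Q^\top Q)}\,dW_s$ and $P_t=\int_0^t\tr\bigl((2p-(d+1))Q^\top Q X_s^{-1}\bigr)ds$ one indeed has $P_t=\frac{2p-(d+1)}{4}[M,M]_t$, so in the internal clock $\log Z$ is Brownian motion with negative drift. But (i) your claim that $[M,M]_{T_x}=\infty$ on $\{T_x=\infty\}$ is justified only ``as long as $X$ stays bounded,'' which is not established and can genuinely fail: when $\beta$ has spectrum in the right half-plane, $X_t$ typically grows, $\tr(X_s^{-1}Q^\top Q)$ may decay, and the clock can stay finite as $t\to\infty$; and (ii) even granting clock divergence, you obtain only $\liminf_{t\to\infty}\det(e^{-\beta^\top t}X_te^{-\beta t})=0$, which is perfectly compatible with $T_x=\infty$ (asymptotic approach without attainment). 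Converting that liminf statement into attainment at a finite time is not a technical afterthought --- it \emph{is} the conjecture, and your proposed comparison with Bru's eigenvalue system and a squared-Bessel lower process is stated but not executed (no matrix comparison theorem covering general $\beta$, $Q$ is supplied, and you acknowledge the perturbation terms destroy the clean comparison). So the proposal is a well-informed reduction of the problem to its true difficulty, but it does not prove the statement; the statement remains open in the paper as well.
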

We further conjecture
\begin{conjecture}
Let $\beta, \,Q$ be arbitrary $d\times d$ matrices, and let $p\leq \frac{d-1}{2}$. Any solution of the Wishart SDE with initial condition $X_0=x\in S_d^+$
hits the boundary almost surely, that is $\mathbb P(T_x<\infty)=1$.
\end{conjecture}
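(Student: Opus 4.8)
The plan is to run McKean's argument (Proposition~\ref{prop MCKean}) in reverse, exploiting that for $p\le\frac{d-1}{2}$ the radial drift points strongly inward rather than outward. Concretely, I would reuse the log-determinant decomposition from the proof of Theorem~\ref{not hitting}: setting $Z_t:=\det(e^{-\beta^\top t}X_te^{-\beta t})$ and applying It\^o's formula gives, for $t<T_x$,
\[
\log Z_t=\log Z_0+M_t+P_t,
\]
with $M_t=2\int_0^t\sqrt{\tr(X_s^{-1}Q^\top Q)}\,dW_s$, $P_t=(2p-(d+1))\int_0^t\tr(Q^\top Q X_s^{-1})\,ds$, and $W$ a one-dimensional Brownian motion. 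Since $Z_t=e^{-2t\,\tr\beta}\det X_t$, the event $\{T_x<\infty\}$ coincides with the event that $\log Z_t\to-\infty$ at a finite time. The decisive difference from the non-attainment regime is that for $p\le\frac{d-1}{2}$ the coefficient $c:=2p-(d+1)\le-2$ is strictly negative, so $P_t$ is strictly decreasing and non-positive, whereas in Theorem~\ref{not hitting} the non-negativity of $P_t$ was exactly what forbade $\log Z_t$ from reaching $-\infty$.

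The central step is a Dambis--Dubins--Schwarz time change with the strictly increasing clock $A_t:=\int_0^t\tr(Q^\top Q X_s^{-1})\,ds$. Because $[M]_t=4A_t$ and $P_t=cA_t$, there is a standard Brownian motion $\widetilde B$ with
\[
\log Z_t=\log Z_0+2\widetilde B_{A_t}+c\,A_t,\qquad c\le-2.
\]
In the clock variable $a=A_t$ this is a Brownian motion with constant negative drift, so on the event $\{A_{T_x^-}=\infty\}$ the strong law for Brownian motion forces $\log Z_t\to-\infty$, i.e.\ the boundary is approached. To upgrade this to attainment in \emph{finite real time} I would estimate the elapsed time via $dt=dA/\tr(Q^\top Q X^{-1})$. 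Near the boundary $\tr(Q^\top Q X^{-1})$ is controlled by the smallest eigenvalue and, provided the remaining eigenvalues stay in a compact subset of $(0,\infty)$, one has $1/\tr(Q^\top Q X^{-1})\lesssim e^{\log Z}$, so that
\[
T_x\ \lesssim\ \int_0^\infty e^{2\widetilde B_a+c a}\,da,
\]
which is almost surely finite for every $c<0$ (a finite exponential functional of Brownian motion with negative drift). Hence on $\{A_{T_x^-}=\infty\}$ one obtains $T_x<\infty$ almost surely.

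The hard part will be to show that $\{A_{T_x^-}=\infty\}$ has full probability, that is, to rule out the competing scenario in which the process escapes to infinity so that $X_s^{-1}$ becomes integrable, the clock $A_t$ stays bounded, and the boundary is never approached. This is a non-explosion/transience question for the radial part in which the linear drift $\beta$ and a possibly degenerate $Q$ interfere, and I expect it is precisely the reason the statement is left as a conjecture in the general case. A companion approach that makes the numerology transparent is to pass to Bru's eigenvalue system: where the eigenvalues are distinct, the smallest eigenvalue $\lambda_{\min}(X_t)$ obeys near $0$ an equation whose drift is $2p-(d-1)+o(1)$, the $-(d-1)$ coming from the eigenvalue repulsion, so that $\lambda_{\min}$ behaves like a squared Bessel process of dimension $2p-(d-1)$. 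This dimension is $\le0$ exactly when $p\le\frac{d-1}{2}$, the regime in which such a process is absorbed at the origin and reaches it almost surely; for $\frac{d-1}{2}<p<\frac{d+1}{2}$ one instead expects merely reflecting behaviour, which is consistent with the weaker (positive-probability) assertion of the preceding conjecture. Making either route rigorous requires handling eigenvalue collisions, where Bru's SDE degenerates, and a general, possibly singular $Q$ and nonzero $\beta$; but it explains why $\frac{d-1}{2}$ rather than $\frac{d+1}{2}$ is the natural threshold for almost-sure attainment, dovetailing with Theorem~\ref{not hitting} and Lemma~\ref{lem definite hit}.
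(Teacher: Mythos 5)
This statement is one of the two open conjectures of Section~\ref{MCKean argument}: the paper gives no proof of it (it explicitly remarks that beyond the case $\beta=0$, $Q=I$ of \cite{matsumoto} such results ``seem not to be known yet''), so your attempt can only be judged on its own merits, not against a proof in the paper. On its merits, the ingredients you reuse correctly are the decomposition $\log Z_t=\log Z_0+M_t+P_t$ with $P_t=(2p-(d+1))A_t$, $A_t=\int_0^t\tr(Q^\top Q X_s^{-1})ds$, the observation that $2p-(d+1)\le -2$ when $p\le\frac{d-1}{2}$, and the Dambis--Dubins--Schwarz reduction to a Brownian motion with negative drift in the clock variable; the squared-Bessel numerology $2p-(d-1)$ for $\lambda_{\min}$ is also a sound heuristic for why $\frac{d-1}{2}$ is the natural threshold. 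But what remains is not a technicality, and the gaps are fatal as the argument stands.

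Concretely: (i) everything funnels through the claim $\mathbb P(A_{T_x^-}=\infty)=1$, which you yourself concede is ``the hard part'' and leave unproven. Worse, for the conjecture as literally stated (arbitrary $Q$) this claim is \emph{false}: take $Q=0$, so that $A\equiv 0$, $M\equiv P\equiv 0$, the Wishart SDE \eqref{wishart full class} degenerates to the flow $\dot X=\beta X+X\beta^\top$, which preserves $S_d^+$, and $T_x=\infty$ almost surely. Thus the statement needs a non-degeneracy hypothesis on $Q$ (for example the Kalman rank condition \eqref{Kalmanmatrix}, as in Lemma \ref{stronger lem definite hit}), and any proof must use it at exactly the step you skip; your sketch never does. (ii) Even granting clock divergence, the negative-drift argument only yields $\log Z_t\to-\infty$ as the clock diverges, i.e.\ $\det X_t\to 0$ possibly only as $t\to\infty$; this is compatible with $T_x=\infty$ and is strictly weaker than the conjecture. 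Your upgrade to finite real time rests on the bound $1/\tr(Q^\top Q X^{-1})\lesssim e^{\log Z}$, which requires both $\lambda_{\min}(Q^\top Q)>0$ and that the non-minimal eigenvalues of $X_t$ stay in a compact subset of $(0,\infty)$ uniformly up to $T_x$. Neither is established, and the latter is essentially as hard as the original problem (with $\beta\neq 0$ the spectrum can drift to $0$ or $\infty$). (iii) The companion eigenvalue route invokes Bru's eigenvalue SDE, which is available only for invertible $Q$ and away from collisions, as you acknowledge. So the proposal is an honest and well-oriented research plan, consistent with Theorem \ref{not hitting} and Lemmas \ref{lem definite hit}, \ref{stronger lem definite hit}, but it is not a proof, and in the stated generality no proof can exist without first repairing the hypotheses.
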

\section{Changing the drift}
Let $(\Omega,\mathcal F,\mathcal F_t,\mathbb P)$ be a filtered probability space satisfying the usual conditions, which supports an $\mathcal F_t$--Brownian motion. Girsanov transformations are tools to derive solutions to SDEs as follows. We consider for a moment the one-dimensional case. Let $X_t$ be a solution of
\[
dX_t=b(X_t)dt+\sigma(X_t)dW_t,\quad X_0=x.
\]
Suppose we  actually seek to solve such an equation for an alternative drift $\tilde b(\cdot)$. If $\sigma$ is invertible, we can rewrite the above equation as
\[
dX_t=\tilde b(X_t)dt+ (b(X_t)-\tilde b(X_t))dt+\sigma(X_t)dW_t=\tilde b(X_t)dt+\sigma(X_t)\left(\frac{b(X_t)-\tilde b(X_t)}{\sigma(X_t)}dt+dW_t\right).
\]

In the following we abbreviate $\gamma_t:=\frac{b(X_t)-\tilde b(X_t)}{\sigma(X_t)}$. If we can show that under a new probability measure $\mathbb Q$, the process
$ \int_0^t\gamma_s ds+W_t$ is a Brownian motion, then we have achieved our goal. Note the weak character of this solution: The Brownian motion is not given in advance, but constructed from the pair $(X_t, W_t)$.

What we have outlined above is indeed possible; it is a consequence of Girsanov's theorem, which asserts that if
\[
Z_t:=\mathcal E(-\int_0^t\gamma_s dW_s)=e^{-\int_0^t \gamma_s dW_s-\frac{1}{2}\int_0^t \vert \gamma_s\vert^2 ds}
\]
is a martingale on $[0,T]$, then $\mathbb Q$ defined as
\[
d\mathbb Q=\mathcal E(-\int_0^T\gamma_s dW_s) d\mathbb P
\]
is a probability measure equivalent to $\mathbb P$, and $\int_0^t\gamma_s ds+ W_t$ is a $\mathbb Q$--Brownian motion on $[0,T]$. The essential problem therefore is to show the martingale property of $(Z_t)_{t\leq T}$. That can be quite tricky.

Bru \cite{bru} used the Girsanov theorem to derive solutions of Wishart SDEs with nonzero linear drift
$\beta$ from SDEs with constant drift only. In special cases she derives solutions until the first time the eigenvalues of the process collide.
The respective time of collision is not dealt with in her work when $\beta\neq 0$; recent work
elaborates on this issue, see Graczyk and Malecki \cite{GraczykMalecki}.

We have already shown the existence of solutions in the preceding chapter when $\beta \neq 0$
under the more stringend condition $p\geq \frac{d+1}{2}$.  So we do not need the Girsanov theorem
to create new solutions, and we also never had to care about the collision of eigenvalues.  But what we can do is to relate solutions with respect to different
drift parameters to each other:
\begin{theorem}
Suppose $X_t$ is a solution of a Wishart SDE with parameters $(p, \;\beta,\;Q)$, where
$Q$ is invertible, and let $X_0=x\in S_d^+$. For $p^\mathbb Q\in\mathbb R$ and a $d\times d$ matrix $\beta^\mathbb Q$ we set
\begin{equation}\label{def: gamma}
\gamma_t:=\sqrt{X_t}((\beta^\top-(\beta^\mathbb Q)^\top)Q^{-1}+ (p-p^\mathbb Q)\sqrt{X_t}^{-1}Q^\top
\end{equation}
and
\begin{equation}\label{def: stoch exp}
Z_t:=\mathcal E\left(-\int_0^t\tr(\gamma_t dB_t) \right).
\end{equation}
If $\min (p,p^\mathbb Q)\geq \frac{d+1}{2}$,  then $Z_t$ is a martingale  on $[0,T]$, and $dB^\mathbb Q_t:=\gamma_t+ B_t$ is a $\mathbb Q$--Brownian motion
on  $[0,T]$. Furthermore $X_t$ satisfies
the Wishart SDE with parameters $(p^\mathbb Q,\beta^\mathbb Q, Q)$ under $\mathbb Q$.
\end{theorem}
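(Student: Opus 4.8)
The plan is to read this as a matrix Girsanov argument and to organize it into three stages: an algebraic drift-matching identity, the formal application of Girsanov's theorem, and---the genuinely delicate part---the proof that the candidate density $(Z_t)_{t\le T}$ is a \emph{true} martingale and not merely a strict local martingale. The hypotheses $Q$ invertible and $X_0=x\in S_d^+$ are exactly what make $\gamma_t$ well defined up to $T_x$, so the whole scheme hinges on controlling the behaviour of $\sqrt{X_t}^{-1}$ near the boundary.

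First I would establish the deterministic identity underlying the change of drift. Writing $b(y):=2pQ^\top Q+\beta y+y\beta^\top$ and $\tilde b(y):=2p^{\mathbb Q}Q^\top Q+\beta^{\mathbb Q}y+y(\beta^{\mathbb Q})^\top$ for the two drifts, and using that $Q$ is invertible and $\sqrt{X_t}$ is invertible for $t<T_x$, a direct matrix computation (substituting the definition \eqref{def: gamma} of $\gamma_t$ and cancelling $\sqrt{X_t}\sqrt{X_t}^{-1}=I$, $Q^{-1}Q=I$) should give
\[
\sqrt{X_t}\,\gamma_t\,Q+Q^\top\gamma_t^\top\sqrt{X_t}=b(X_t)-\tilde b(X_t).
\]
Substituting $dB_t=dB^{\mathbb Q}_t-\gamma_t\,dt$ into the Wishart SDE \eqref{wishart full class} with parameters $(p,\beta,Q)$, this identity converts the $\mathbb P$-drift $b(X_t)\,dt$ into $\tilde b(X_t)\,dt$ plus the diffusion term driven by $B^{\mathbb Q}$, so that $X$ formally satisfies the $(p^{\mathbb Q},\beta^{\mathbb Q},Q)$-Wishart SDE once $B^{\mathbb Q}$ is shown to be a $\mathbb Q$-Brownian motion.

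Next I would invoke Girsanov's theorem in the form recalled earlier in this section: provided $(Z_t)_{t\le T}$ is a genuine martingale, the measure $\mathbb Q$ with $d\mathbb Q=Z_T\,d\mathbb P$ is a probability measure equivalent to $\mathbb P$, and $B^{\mathbb Q}_t:=B_t+\int_0^t\gamma_s\,ds$ is a $\mathbb Q$-Brownian motion on $[0,T]$. Note that because $p\ge\frac{d+1}{2}$, Theorem \ref{not hitting} gives $T_x=\infty$ $\mathbb P$-a.s., so under $\mathbb P$ the matrix $\sqrt{X_t}^{-1}$ and hence $\gamma_t$ are $\mathbb P$-a.s. continuous and locally bounded for all finite $t$; thus $\int_0^t\tr(\gamma_s\,dB_s)$ is a well-defined continuous local martingale and $Z_t$ is at least a well-defined positive local martingale, i.e. a $\mathbb P$-supermartingale with $\mathbb E^{\mathbb P}[Z_T]\le 1$. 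Everything therefore reduces to upgrading this supermartingale to a martingale.

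The hard part is precisely this last step, and Nov\'{\i}kov's criterion is not directly available since $|\gamma_t|$ blows up as $X_t$ approaches $\partial\overline S_d^+$. Instead I would use the standard ``no boundary attainment under $\mathbb Q$'' criterion: localize by the stopping times $\tau_n:=\inf\{t:\det(X_t)<1/n\}\wedge n$, on each of which $Z^{\tau_n}$ is a true martingale defining a consistent family of measures $\mathbb Q_n$. The telescoping identity $\mathbb E^{\mathbb P}[Z_{T\wedge\tau_n}]=1$ then yields $\mathbb E^{\mathbb P}[Z_T]=1-\lim_n\mathbb Q(\tau_n\le T)$, so $Z$ is a true martingale if and only if $\tau_n\uparrow\infty$ $\mathbb Q$-a.s., that is, if and only if $X$ does not reach the boundary under $\mathbb Q$. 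By the first two stages, $X$ is a $(p^{\mathbb Q},\beta^{\mathbb Q},Q)$-Wishart solution under $\mathbb Q$, so applying Theorem \ref{not hitting} with parameter $p^{\mathbb Q}\ge\frac{d+1}{2}$ shows the boundary is indeed not attained $\mathbb Q$-a.s. This is where the two-sided condition $\min(p,p^{\mathbb Q})\ge\frac{d+1}{2}$ is used: $p\ge\frac{d+1}{2}$ keeps $\gamma_t$ finite under $\mathbb P$, while $p^{\mathbb Q}\ge\frac{d+1}{2}$ rules out the $\mathbb Q$-explosion that would otherwise force $\mathbb E^{\mathbb P}[Z_T]<1$. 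I expect the verification that $X$ solves the $\mathbb Q$-SDE \emph{before} knowing $\mathbb Q$ is a probability measure---i.e. making the localized argument logically non-circular---to be the main technical subtlety, handled by working on each $[0,\tau_n]$ where the Girsanov change is unambiguous.
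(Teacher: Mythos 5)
Your overall architecture (the drift-matching algebra, Girsanov conditional on the martingale property, localization plus Novikov on truncated integrands, and boundary non-attainment for the parameter $p^{\mathbb Q}$) matches the paper's proof, which follows \cite{Cher I}. But there is a genuine gap at the decisive step. You reduce everything to the claim that ``$X$ does not reach the boundary under $\mathbb Q$'', and you propose to verify this by applying Theorem \ref{not hitting} to $X$ viewed as a solution of the $(p^{\mathbb Q},\beta^{\mathbb Q},Q)$-Wishart SDE under $\mathbb Q$. The problem is that no such measure $\mathbb Q$ is available at that point of the argument: $d\mathbb Q=Z_T\,d\mathbb P$ defines a probability measure (and $B^{\mathbb Q}$ a $\mathbb Q$-Brownian motion) only once one knows $\mathbb E[Z_T]=1$, which is exactly what is being proved --- so taken literally the step is circular. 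Working ``on each $[0,\tau_n]$'', as you suggest, produces only the sequence of measures $d\mathbb Q_n=Z_{T\wedge\tau_n}\,d\mathbb P$, and what must be shown is $\lim_n\mathbb Q_n(\tau_n<T)=0$; under $\mathbb Q_n$ the process solves the new SDE only up to $\tau_n$ (after $\tau_n$ the drift change is switched off and $X$ reverts to the old dynamics), so McKean's argument cannot be run under any single $\mathbb Q_n$, and passing to one limiting measure $\mathbb Q$ that restricts to every $\mathbb Q_n$ is a F\"ollmer-type extension which holds only on suitable (canonical) spaces and is nowhere constructed in your sketch.

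The paper closes exactly this hole with a device absent from your proposal: an auxiliary process $\widetilde X$ solving the $(p^{\mathbb Q},\beta^{\mathbb Q},Q)$ Wishart SDE under the \emph{original} measure $\mathbb P$, driven by the original Brownian motion $B$, with the same initial condition, together with its stopping times $\widetilde\tau_n$. Since the stopped process $X_{\cdot\wedge\tau_n}$ under $\mathbb Q_n$ has the same law as $\widetilde X_{\cdot\wedge\widetilde\tau_n}$ under $\mathbb P$ (this uses uniqueness in law for the new SDE, valid because $p^{\mathbb Q}\geq\frac{d+1}{2}$ keeps $\widetilde X$ in $S_d^+$, where the coefficients are locally Lipschitz), one gets $\mathbb Q_n(\tau_n=T)=\mathbb P(\widetilde\tau_n=T)\rightarrow 1$, the last limit by Theorem \ref{not hitting} applied to $\widetilde X$ under $\mathbb P$ --- no limiting measure is ever needed. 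A second, smaller defect: your stopping times $\tau_n=\inf\{t:\det(X_t)<1/n\}\wedge n$ do not make Novikov's condition applicable, because a lower bound on $\det(X_t)$ bounds neither $\|\sqrt{X_t}^{-1}\|$ nor $\|\sqrt{X_t}\|$ (one eigenvalue may tend to zero while another blows up, keeping the determinant large); one must truncate $\|\gamma(X_t)\|$ itself, as the paper does.
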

\begin{remark}\rm
\begin{itemize}
\item On the level of Wishart semimartingales, the result translates in a statement
 concerning their absolute continuity, see \cite{Cher II}.
\item The theorem bases on the fact that under the condition $\min (p,p^\mathbb Q)\geq \frac{d+1}{2}$
the respective Wishart semimartingales do not attain the boundary $\partial \overline S_d^+$ in finite time,
see Theorem \ref{not hitting}. Note also: it is impossible to define $\gamma_t$ unless $X_t\in S_d^+$
for all $t\leq T$.
\item The best known sufficient criterium for $\mathcal E(-\int_0^t\gamma_s dB_s)$ to be a martingale
is provided by Novikov's condition,
\[
\mathbb E[e^{\frac{1}{2}\int_0^T \|\gamma_t\|^2dt}]<\infty.
\]
This condition is hard to check in our context. Furthermore, it fails, in general. In fact, for the particular case $Q=I$, $\beta=0,\beta^\mathbb Q=I$, $p=p^\mathbb Q$ we have that
\[
\mathbb E[e^{\frac{1}{2}\int_0^T \|\gamma_t\|^2dt}]=
\mathbb E[e^{\frac{1}{2}\int_0^T \tr(X_t)dt}]
\]
which is infinite for sufficiently large $T$. To see this
we interpret it as the exponential moment of a new stochastic process (a so-called affine process)
$(X_t, Y_t=\tr(X_t))$ on $\overline S_d^+\times \mathbb R_+$ whose moment generating equals
\begin{equation}\label{mom ex}
\mathbb E[e^{\frac{1}{2}\int_0^T Y_tdt}]=e^{\phi(t)+\tr(x\psi(t))},
\end{equation}
where $\psi(t)$ satisfies the ODE
\[
\partial_t\psi(t)=2\psi(t)^2+\frac{I}{2},\quad \psi(0)=0.
\]
This is a matrix Riccati differential equation which has explosion in finite time (say at $t_+>0$), and by the positivity of $x$
we have that $\tr(x\psi(t))\uparrow \infty$ as $t\uparrow t_+$. It follows that the moment \eqref{mom ex} explodes.

For more information on the technique of enlargement of the state space and calculation of the moment
generating function of affine processes, see for instance \cite[Proof of Theorem 4.1]{ADPTA}.
\end{itemize}
\end{remark}

\begin{proof}
We start with the second part. Under the premise that $Z$ is a true martingale, the conclusion of
Girsanov's theorem holds and we obtain
\begin{align}\label{eq under P}
dX_t&=\sqrt {X_t} dB_t Q+Q^\top dB_t^\top \sqrt {X_t}+(2p\,Q^\top Q+\beta X_t+X_t\beta^\top )dt\\
&=\sqrt{X_t}(\gamma_t dt+dB_t)Q+Q^\top (\gamma_t^\top dt+dB_t^\top)\sqrt X_t +
(2p^\mathbb Q\,Q^\top Q+\beta^\mathbb Q X_t+X_t(\beta^\mathbb Q)^\top )dt\\\label{eq under P}
&=\sqrt {X_t} dB^\mathbb Q_t Q+Q^\top d(B^\mathbb Q)_t^\top \sqrt {X_t}+(2p^\mathbb Q\,Q^\top Q+\beta^\mathbb Q X_t+X_t(\beta^\mathbb Q)^\top )dt
\end{align}
and therefore $X_t$ is a solution of the Wishart SDE on $[0,T]$ with new parameters $(p^\mathbb Q,\beta^\mathbb Q, Q)$ under the measure $\mathbb Q$.

It remains to show that $Z_t$ is a true martingale. We use the exact arguments as provided by
the proof of \cite[Theorem 1]{Cher I} but adapted to our matrix-valued setting.

Since $p\geq \frac{d+1}{2}$, we have a well defined positive definite solution $X_t$
of the Wishart SDE \eqref{eq under P} (subject to $X_0=x$) in view of Theorem \ref{not hitting} and therefore the process $\gamma (X_t)$ of eq.~\eqref{def: gamma} is well defined
on $0\leq t\leq T$. The stochastic exponential $Z_t$ given by \eqref{def: stoch exp} is a strictly positive local martingale, hence it is a supermartingale. To show that it is a true martingale, it suffices to prove\footnote{Every positive local martingale is a supermartingale and every supermartingale with constant expectation is a martingale. See \cite[p.123]{RevuzYor}.} that
\begin{equation}\label{con: terminal}
\mathbb E[Z_T]=1.
\end{equation}
Quite similarly, there also exists a solution $\widetilde X_t$
of the Wishart SDE
\[
d\widetilde X_t=\sqrt {\widetilde X_t} dB_t Q+Q^\top d B_t^\top \sqrt {\widetilde X_t}+(2p^\mathbb Q\,Q^\top Q+\beta^\mathbb Q \widetilde X_t+\widetilde X_t(\beta^\mathbb Q)^\top )dt
\]
subject to the same initial condition $\widetilde X_t=x$ (note here: we use the desired new drift parameters with $Q$ superscripts,
but the SDE is driven by the original Brownian motion $B$). This process serves as auxiliary process to show
condition \eqref{con: terminal}. We also can define $\gamma_t(\widetilde X_t)$
exactly as in \eqref{def: gamma}, but using $\widetilde X_t$ instead of $X_t$.

We introduce the two sequences of stopping times
\[
\tau_n=\inf\{t>0\mid \|\gamma(X_t)\|\geq n\}\wedge T
\]
and
\[
\widetilde{\tau}_n=\inf\{t>0\mid \|\gamma(\widetilde X_t)\|\geq n\}\wedge T.
\]
These are increasing sequences satisfying
\begin{equation}
\lim_{n\rightarrow\infty}\mathbb P(\tau_n=T)=\lim_{n\rightarrow\infty}\mathbb P(\widetilde{\tau}_n=T)
\end{equation}
because we use the convention that the infimum of an empty set is $+\infty$. For each $n\geq 1$
we define the process
\begin{equation}
\gamma_t^n:=\gamma(X_t)1_{t\leq \tau_n},\quad t\in[0,T].
\end{equation}
By construction $\int_0^t\|\gamma_s^n(X)\|^2\leq n^2 t$, and therefore Novikov's condition
\[
\mathbb E[e^{\frac{1}{2}\int_0^T\|\gamma^n_s(X)\|^2ds}]<\exp(n^2 T/2)
\]
which let us conclude that
\[
Z_t^n=
\mathcal E\left(-\int_0^t\tr(\gamma^n_s dB_s) \right)
\]
is a martingale and
$d\mathbb Q^n:=Z_T^n d\mathbb P$ defines a probability measure
equivalent to $\mathbb P$ for which
$B^n_t=\int_0^t \gamma_s^n(X)ds+B_t $ is a $d\times d$
standard Brownian motion on $[0,T]$. Furthermore, for each $n$,
the stopped process $X_t^n:=X_{t\wedge \tau_n}$ have the same law under $\mathbb Q^n$
as the stopped processes $\widetilde X_t^n=\widetilde X_{t\wedge \widetilde{\tau}_n}$
under $\mathbb P$. We therefore have
\begin{align*}
\mathbb E[Z_T]&=\lim_{n\rightarrow\infty} \mathbb E[Z_T^n 1_{\tau_n=T}]\\
&=\lim_{n\rightarrow\infty} \mathbb E^{\mathbb Q^n}[ 1_{\tau_n=T}]=
\lim_{n\rightarrow\infty} \mathbb  Q^n(\{\tau_n=T\})\\
&=\lim_{n\rightarrow\infty} \mathbb  P(\{\widetilde{\tau}_n=T\})=1,
\end{align*}
where the first identity follows from monotone convergence (which is applicable because the sets $\tau_n=T$
are increasing in $n$, and $Z_T^n$ is a constant sequence along this sequence; hence the sequence
$Z_T^n 1_{\tau_n=T}$ is a monotonically increasing one).
\end{proof}

\section{On the existence of Wishart distributions}
In this section we provide some results concerning the existence of Wishart distributions and their densities.
To this end, we introduce some further notation. Let $a\in\mathbb R$
and $k\in\mathbb N_0$. The hypergeometric coefficient $(a)_k$ is defined as
\[
(a)_k:=\begin{cases}1,\quad\textrm{  if  } k=0\\ a(a+1)\cdot \dots\cdot (a+k-1),\textrm{  otherwise  }\end{cases}.
\]
Let $\kappa=(\kappa_1,\dots,\kappa_d)\in\mathbb N_0^d$ be a multi-index with length $\vert\kappa\vert=\kappa_1+\dots+\kappa_d$. The generalized ($d$--dimensional) hypergeometric  coefficient $(p)_{\kappa}$ is given by
\[
(p)_{\kappa}=\prod _{j=1}^d\left(p-\frac{1}{2}(j-1)\right)_{\kappa_j},
\]
see for instance \cite[p.~30]{GuptaNagar01}. $C_{\kappa}:S_d\rightarrow \mathbb R$ shall denote the zonal polynomial of order $\kappa$, where $\kappa\in\mathbb N_0^d$. There are several equivalent definitions, for instance $C_\kappa(\xi)$ equals the $\kappa$th component of
$(\tr(\xi))^k$, (see \cite[Definition 1.5.1]{GuptaNagar01}). Hence
\[
(\tr(\xi))^k=\sum_{\vert\kappa\vert=k} C_\kappa(\xi).
\]
A more abstract definition \cite[p. 234]{Faraut} is that
\begin{equation}
C_{\kappa}(\xi):=\omega_\kappa\int_{k\in SO(d)}\Delta_{\kappa}(k\cdot \xi)dk
\end{equation}
where $dk$ is the normalized unique Haar measure on the special orthogonal group $SO(d)$, and $\omega_\kappa$ is some normalizing constant.

\begin{proposition}\label{lemma: existence wishart dist}
Let $p\in \Lambda_d$, $\sigma\in \overline S_d^+$ and $\omega\in \overline S_d^+$. We have:
\begin{enumerate}
\item \label{ex wis dist 0} If $2p\in \mathbb N$ and if $\rank(\omega)\leq 2p$, then $\Gamma(p,\omega;\sigma)$ exists.
\item \label{ex wis dist 1} If $p\geq \frac{d-1}{2}$, then the right side of \eqref{FLT Mayerhofer Wishart} is the Laplace transform of a probability measure $\Gamma(p, \omega;\sigma)$ on $\overline S_d^+$.
\item \label{ex wis dist 2} In particular, if $p>\frac{d-1}{2}$ and if $\sigma$ is invertible, then the density of $\Gamma(p, \omega;\sigma)$ exists and is given by

    \begin{align}\label{densities for wishart dist}
F(p, \omega;\sigma)(\xi)&:=(\det\sigma)^{-p}\,e^{-\tr(\sigma^{-1}\xi+\sigma a)}(\det \xi)^{p-\frac{d+1}{2}}\\\nonumber
&\quad\times \left(\sum_{m=0}^\infty\sum_{\vert\kappa\vert=m}\frac{C_\kappa(\sqrt{a}\xi \sqrt{a})}{m!(p)_{\kappa}}\right)\frac{1_{\overline S_d^+}(\xi)}{\Gamma_k(p)},
\end{align}
where we have set $a=a(\omega):=\sigma^{-1}\omega\sigma^{-1}$, $q:=q(\sigma)=\sqrt{\sigma}$.

    \item \label{ex wis dist 3} If $\sigma$ is degenerate, $\Gamma(p, \omega;\sigma)$ is not absolutely continuous with respect to the Lebesgue measure on $\overline S_d^+$.
\end{enumerate}
\end{proposition}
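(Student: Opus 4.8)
The plan is to prove the four claims in the order (iii), (ii), (i), (iv): the explicit density in (iii) is the analytic core, (ii) is obtained from it by a continuity argument, while (i) and (iv) follow directly from the Gaussian realization of the Introduction and from the structure of the Laplace transform \eqref{FLT Mayerhofer Wishart}, respectively.

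\textbf{Part (iii).} First I would reduce to $\sigma=I$ by the congruence $\xi\mapsto\sqrt\sigma\,\xi\,\sqrt\sigma$ (Proposition \ref{proplemma}), whose Jacobian produces the factor $(\det\sigma)^{-p}$ in \eqref{densities for wishart dist}; it then suffices to check that $F(p,\omega;I)$ integrates against $e^{-\tr(u\xi)}$ to $\det(I+u)^{-p}e^{-\tr(u(I+u)^{-1}\omega)}$. Recognizing the bracketed series as the matrix hypergeometric function built from the zonal polynomials $C_\kappa$, I would merge the two exponentials to exponent $-\tr((I+u)\xi)$ and integrate term by term (the interchange being legitimate by nonnegativity). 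The decisive ingredient is the zonal Laplace identity $\int_{\overline S_d^+}e^{-\tr(s\xi)}(\det\xi)^{p-\frac{d+1}{2}}C_\kappa(\xi T)\,d\xi=\Gamma_d(p)\,(p)_\kappa\,(\det s)^{-p}\,C_\kappa(Ts^{-1})$, applied with $s=I+u$ and $T=\omega$. The factors $(p)_\kappa$ cancel the $1/(p)_\kappa$ in the series, and what remains is $\sum_\kappa C_\kappa(\omega(I+u)^{-1})/|\kappa|!$, which by the Introduction's identity $\sum_{|\kappa|=k}C_\kappa=(\tr)^k$ equals $e^{\tr(\omega(I+u)^{-1})}$; together with the prefactor $e^{-\tr\omega}$, and using that $u$ commutes with $(I+u)^{-1}$, this collapses to exactly $e^{-\tr(u(I+u)^{-1}\omega)}$. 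Evaluating at $u=0$ then forces the normalizing constant to be $1/\Gamma_d(p)$ and gives total mass one. \textbf{The main obstacle is the zonal Laplace identity itself}, which is the genuinely nontrivial piece of harmonic analysis on the cone; it is exactly here that $p>\frac{d-1}{2}$ (equivalently $p-\frac{d+1}{2}>-1$) enters, to make $(\det\xi)^{p-\frac{d+1}{2}}$ integrable at $\partial\overline S_d^+$.

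\textbf{Part (ii).} Given (iii), existence holds for $p>\frac{d-1}{2}$ and invertible $\sigma$; the remaining parameters lie on the boundary of this range and I would reach them by continuity. For $p=\frac{d-1}{2}$ take $p_n\downarrow\frac{d-1}{2}$, and for singular $\sigma$ take $\sigma_n=\sigma+\frac1n I\in S_d^+$. In either case the right-hand side of \eqref{FLT Mayerhofer Wishart} is jointly continuous in $(p,\sigma)$ up to the limit and equals $1$ at $u=0$, so the approximating probability measures supplied by (iii) have Laplace transforms converging pointwise to a limit that is continuous at the origin. The continuity theorem for Laplace transforms of measures on the cone (which also yields tightness from the continuity of the limit at $u=0$) then produces a weak limit, necessarily a probability measure on $\overline S_d^+$ with the prescribed transform. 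Composing the two approximations covers all $p\ge\frac{d-1}{2}$ and all $\sigma,\omega\in\overline S_d^+$.

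\textbf{Parts (i) and (iv).} For (i), when $m:=2p\in\mathbb N$ and $\rank(\omega)\le m$ I would realize the law directly: diagonalize $\omega=\sum_{j=1}^{r}\lambda_j v_jv_j^\top$ with $r=\rank(\omega)\le m$, set $\mu_j=\sqrt{\lambda_j}\,v_j$ for $j\le r$ and $\mu_j=0$ otherwise, draw independent $\xi_j\sim N(\mu_j,\sigma/2)$, and form $\Xi=\sum_{j=1}^m\xi_j\xi_j^\top\in\overline S_d^+$; the completion-of-squares computation leading to \eqref{eq: discrete wishart dist} shows $\Xi\sim\Gamma(p,\omega;\sigma)$, so the distribution exists (the rank hypothesis is precisely what lets $\omega$ be written as a sum of $m$ rank-one terms). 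For (iv), suppose $\sigma$ is singular and pick $0\ne v\in\ker\sigma$. Evaluating \eqref{FLT Mayerhofer Wishart} at $u=\lambda vv^\top$ gives $\sigma u=0$, hence $\det(I+\sigma u)=1$ and $(I+\sigma u)^{-1}=I$, so the transform reduces to $e^{-\lambda\,v^\top\omega v}$, the Laplace transform of the Dirac mass at $v^\top\omega v$. Thus $v^\top X v=v^\top\omega v$ almost surely, so $\Gamma(p,\omega;\sigma)$ is carried by the affine hyperplane $\{\xi:v^\top\xi v=v^\top\omega v\}$, which is Lebesgue-null in $S_d$; the distribution is therefore singular and admits no density. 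This last step is short and self-contained, flowing directly from the explicit transform rather than from the density formula.
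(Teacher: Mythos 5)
Your proposal is correct, and for parts (i) and (ii) it is essentially the paper's own argument: (i) is the Gaussian sum-of-squares construction from the introduction (the rank hypothesis being exactly what allows $\omega=\sum_{j=1}^{2p}\mu_j\mu_j^\top$), and (ii) is the same regularization-plus-L\'evy-continuity argument (the paper perturbs $p_\varepsilon=p+\varepsilon$ and $\sigma_\varepsilon=\sigma+\varepsilon I$ simultaneously rather than in two stages, an immaterial difference). For parts (iii) and (iv) you take genuinely different routes. In (iii) the paper computes nothing: it observes that $\Gamma(p,\omega;\sigma)=\gamma(p,\sigma^{-1}\omega\sigma^{-1};\sigma)$ in Letac--Massam's parametrization and cites their density formula; you instead derive \eqref{densities for wishart dist} from the zonal Laplace identity $\int_{\overline S_d^+}e^{-\tr(s\xi)}(\det\xi)^{p-\frac{d+1}{2}}C_\kappa(\xi T)\,d\xi=\Gamma_d(p)\,(p)_\kappa\,(\det s)^{-p}\,C_\kappa(Ts^{-1})$, which is the standard special-function proof (indeed it is how the cited formula is established in the literature). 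This buys a self-contained computation and makes visible exactly where $p>\frac{d-1}{2}$ enters, but the identity you rest on is itself the deep imported ingredient, so the two routes assume comparable amounts; your reduction to $\sigma=I$ via the congruence $\xi\mapsto\sqrt\sigma\,\xi\sqrt\sigma$ and the bookkeeping of the $(\det\sigma)^{-p}$ factor check out. In (iv) the paper first normalizes $\sigma$ to $\diag(0,I_r)$ using invariance of the Wishart family under linear transformations, projects onto the block on which $\sigma$ vanishes, identifies the pushforward as a point mass, and invokes the fact that marginals of absolutely continuous laws are absolutely continuous. Your argument---evaluate \eqref{FLT Mayerhofer Wishart} at $u=\lambda vv^\top$ with $0\neq v\in\ker\sigma$, note $\sigma u=0$, conclude $v^\top Xv=v^\top\omega v$ almost surely, hence concentration on a Lebesgue-null affine hyperplane of $S_d$---reaches the same conclusion more directly, with no normalization step and no appeal to marginal absolute continuity; it is, if anything, cleaner than the paper's version.
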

\begin{proof}
Statement \label{ex wis dist 0} is proved by summing up squares of normally distributed $\mathbb R^d$-valued random variables, see section \ref{sec: intro}.

Note that if $\sigma\in S_d^+$, our definition of non-central Wishart distribution is related to the one of
\cite{Letac08} in that $\Gamma(p,\omega;\sigma)=\gamma(p,\sigma^{-1}\omega\sigma^{-1};\sigma)$, the latter being
called ''general non-central Wishart distribution" in \cite{Letac08}. Hence statement \ref{ex wis dist 2} is  a
consequence of \cite[p.\ 1400]{Letac08}.

Now for each $\varepsilon>0$ we regularize $\sigma$, $a$ and $p$ by setting
\[
\sigma_\varepsilon:=\sigma+\varepsilon I, \quad a_\varepsilon:=(\sigma+\varepsilon I)^{-1}\omega (\sigma+\varepsilon I)^{-1}, \quad p_\varepsilon=p+\varepsilon.
\]
Then for each $\varepsilon>0$, we pick $X_\varepsilon$, an $\overline S_d^+$ valued random variable according to
\cite[Proposition 2.3]{Letac08} such that
\[
X_\varepsilon\sim\Gamma(p_\varepsilon,\omega;\sigma_\varepsilon) (=\gamma(p_\varepsilon,a_\varepsilon;\sigma_\varepsilon)).
 \]
Letting $\varepsilon\rightarrow 0$ and using L\'evy's continuity theorem, we figure
that
\[
 \left(\det(I+\sigma u)\right)^{-p}e^{-\tr(u(I+\sigma
u)^{-1}\omega)}=\lim_{\varepsilon\rightarrow 0} \left(\det(I+\sigma_\varepsilon u)\right)^{-p_\varepsilon}e^{-\tr(u(I+\sigma_\varepsilon
u)^{-1}\omega)}
\]
must be the Laplace transform of some random variable $X\sim\Gamma(p,\omega;\sigma)$, to which
$X_\varepsilon$ converges in distribution as $\varepsilon\rightarrow 0$.
This settles part \ref{ex wis dist 1}. Finally, we consider assertion \ref{ex wis dist 3}. Assume, by contradiction,
that $\Gamma(p,\omega;\sigma)$ has a Lebesgue density, for some $\sigma$ of rank $r<d$.
Let $X$ be an $\overline S_d^+$--valued random variable distributed according to $\Gamma(p,\omega;\sigma)$. Since linear transformations do not affect the property of having a density
and since the non-central Wishart family is invariant under linear transformations (this is easy to check), we may without loss of generality assume that $\sigma=\diag(0,I_r)$, where $I_r$ is the $r\times r$ unit matrix. Consider the projection
\[
\pi_r: x=(x_{ij})_{1\leq i,j\leq d}\mapsto \pi_r(x):=(x_{ij})_{1\leq i,j\leq r}.
\]
A simple algebraic manipulation yields that the Laplace transform of $\pi_r(X)$ equals
\[
e^{-\tr(\pi_r(\omega)v)}, \quad v\in S_{r}^+,
\]
which is the Laplace transform of the unit mass concentrated at $\pi_r(\omega)$. But the pushforward of a measure with density under a projection must have a density again. This yields the deserved contradiction.
\end{proof}
A very important consequence of this statement in combination of
the results of section \ref{sec: relations} is the following existence result
\begin{corollary}
For all $p\geq \frac{d-1}2$,  $\alpha\in \overline S_d^+$ and $d\times d$ matrices $\beta$,
Wishart processes with Wishart transition function with parameters $(\alpha, p,\beta)$ exist.
Therefore all Wishart semimartingales with the same parameters, starting at $x\in \overline S_d^+$ exist.
Similarly, for all $Q$ with $Q^\top Q=\alpha$ and for all $x\in \overline S_d^+$ the Wishart SDE admits global weak solutions.
\end{corollary}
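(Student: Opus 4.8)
The plan is to chain together the structural results of Section~\ref{sec: relations} with the existence statement Proposition~\ref{lemma: existence wishart dist}\ref{ex wis dist 1}. The decisive input is that part, which guarantees, for $p\geq\frac{d-1}{2}$, the existence of the Wishart distribution $\Gamma(p,\omega;\sigma)$ for \emph{every} pair $\omega,\sigma\in\overline S_d^+$. First I would check that the ingredients of the Wishart transition function of Definition~\ref{def wish pro m} always land in the cone: for every $t\geq 0$ and $x\in\overline S_d^+$ the matrix $\omega_t^\beta(x)=e^{\beta t}xe^{\beta^\top t}$ is positive semidefinite because conjugation preserves positivity, and $\sigma_t^\beta(\alpha)=2\int_0^t\omega_s^\beta(\alpha)\,ds\in\overline S_d^+$ as an integral of positive semidefinite matrices. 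Hence $p_t(x,d\xi)=\Gamma(p,\omega_t^\beta(x);\sigma_t^\beta(\alpha))$ is well defined for all $t\geq 0$ and $x\in\overline S_d^+$; that is, the Wishart transition function with parameters $(\alpha,p,\beta)$ exists.

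Next I would invoke Lemma~\ref{wish is mark}, by which this family is a Markovian transition function generating a Feller semigroup on $C_0(\overline S_d^+)$. The canonical construction recalled before Proposition~\ref{prop: Wishart p are Wishart sem} (via \cite[Theorem 1.1]{EthierKurtz}) then produces, for each initial point $x$, a Markov process $X$ on the path space with these transition laws --- a Wishart process in the sense of these notes. This settles the first assertion; note that the process is defined for \emph{all} $t\geq 0$, precisely because the transition function is, which in turn relies on part~\ref{ex wis dist 1} (valid for every $t$) rather than merely on the discrete existence of part~\ref{ex wis dist 0}. For the second assertion I would simply apply Proposition~\ref{prop: Wishart p are Wishart sem}: for each $x$ the Wishart process $X$ is a Wishart semimartingale on $(\Omega,\mathcal F,\mathcal F_t,\mathbb P^x)$.

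For the third assertion I would use the converse half of Lemma~\ref{wish sde is wishart semi}: after a suitable enlargement of the probability space there exist a $d\times d$ Brownian motion $B$ and \emph{some} matrix $Q_0$ with $Q_0^\top Q_0=\alpha$ for which $X$ solves the Wishart SDE~\eqref{wishart full class} with data $(p,\beta,Q_0)$. To obtain a solution for an arbitrary prescribed $Q$ with $Q^\top Q=\alpha$, I would exploit the rotational invariance of the equation: since $Q$ and $Q_0$ have the same Gram matrix, their columns are related by a common orthogonal map, so $Q=OQ_0$ for some orthogonal $O$. Replacing the driving Brownian motion $B$ by $\widetilde B:=BO^\top$ --- which is again a standard $d\times d$ Brownian motion, as right multiplication by an orthogonal matrix leaves the covariation structure $d[\widetilde B_{ab},\widetilde B_{cd}]=\delta_{ac}\delta_{bd}\,dt$ intact --- gives $\sqrt{X_t}\,d\widetilde B_t\,Q=\sqrt{X_t}\,dB_t\,O^\top O Q_0=\sqrt{X_t}\,dB_t\,Q_0$, so the $Q_0$-equation is converted into the $Q$-equation while the drift term $2pQ^\top Q=2p\alpha$ is untouched. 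This yields the desired global weak solution for every such $Q$.

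I do not expect a genuine obstacle, as the substance resides in the already-established lemmas; the proof is essentially the correct concatenation of Proposition~\ref{lemma: existence wishart dist}\ref{ex wis dist 1}, Lemma~\ref{wish is mark}, Proposition~\ref{prop: Wishart p are Wishart sem} and Lemma~\ref{wish sde is wishart semi}. The two points that call for a (short) argument rather than a bare citation are the \emph{globality} in $t$ --- which must be traced back to part~\ref{ex wis dist 1} holding for every $t\geq 0$, not just to local existence --- and the upgrade from the single $Q_0$ furnished by Lemma~\ref{wish sde is wishart semi} to \emph{all} $Q$ with $Q^\top Q=\alpha$, handled by the orthogonal-rotation argument above.
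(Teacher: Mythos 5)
Your proposal is correct and follows essentially the same route as the paper: existence of the Wishart distribution makes the transition function \eqref{def: Wishart} well defined, Lemma \ref{wish is mark} makes it Markovian/Feller, Proposition \ref{prop: Wishart p are Wishart sem} gives the semimartingale property, and Lemma \ref{wish sde is wishart semi} produces the global weak solution of the SDE. You are in fact slightly more careful than the paper at two points: you invoke part \ref{ex wis dist 1} of Proposition \ref{lemma: existence wishart dist} (the paper cites part \ref{ex wis dist 0}, which only covers $2p\in\mathbb N$ and so does not reach all $p\geq\frac{d-1}{2}$), and you supply the orthogonal-rotation argument $Q=OQ_0$, $\widetilde B:=BO^\top$ that upgrades the single $Q_0$ furnished by Lemma \ref{wish sde is wishart semi} to an arbitrary $Q$ with $Q^\top Q=\alpha$, a step the paper leaves implicit.
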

\begin{proof}
Proposition \ref{lemma: existence wishart dist} \ref{ex wis dist 0} allows a well defined Wishart transition
function of the form \eqref{def: Wishart}. This transition function is Markovian by Lemma \ref{wish is mark}.
Now we can combine Proposition \ref{prop: Wishart p are Wishart sem}, Lemma \ref{wish sde is wishart semi} to obtain the remaining assertions.
\end{proof}

\section{A rank condition for non-central Wishart distributions}
Not for all triples $(p,\omega,\sigma)\in\mathbb R_+\times \overline S_d^+\times \overline S_d^+$
Wishart distributions $\Gamma(p,\omega;\sigma)$ exist. \cite{MayerhoferExistenceNonCentral}
shows the following:
\begin{theorem}\label{th: maintheorem}
Let $d\in\mathbb N$, $p> 0$, $\omega \in \overline S_d^+$.  Suppose $\sigma\in \overline S_d^+$ is invertible.
If the right side of \eqref{FLT Mayerhofer Wishart} is the Laplace transform of a non-trivial probability
measure $\Gamma(p,\omega;\sigma)$ on $\overline S_d^+$, then $p\in\Lambda_d$ and
$\rank(\omega)\leq 2p+1$.
\end{theorem}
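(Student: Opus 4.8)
The plan is to treat the two conclusions in tandem after two reductions. First, by the congruence invariance of the Wishart family (the same device used in the proof of Lemma~\ref{lem: support wishart}: the map $x\mapsto AxA^{\top}$ sends $\Gamma(p,\omega;\sigma)$ to $\Gamma(p,A\omega A^{\top};A\sigma A^{\top})$), I would apply $A=\sqrt2\,\sigma^{-1/2}$ to normalise $\sigma=2I$; this keeps $p$ fixed and replaces $\omega$ by a congruent, hence equirank, matrix, so neither conclusion is affected. Second, I would observe that the two conclusions overlap only mildly: since $2p+1\ge d$ exactly when $p\ge\frac{d-1}{2}$, the inequality $\rank(\omega)\le 2p+1$ is vacuous on the continuous part of $\Lambda_d$, where existence (indeed a Lebesgue density) is already guaranteed by Proposition~\ref{lemma: existence wishart dist}\,\ref{ex wis dist 1}--\ref{ex wis dist 2}. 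Thus the genuine content is (a) that $p$ lies in $\Lambda_d$ at all, and (b) that once $p<\frac{d-1}{2}$ one has the effective restriction $\rank(\omega)\le 2p+1$.

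For both parts I would analytically continue the Laplace transform $\widehat\nu(u)=\int_{\overline S_d^+}e^{-\tr(u\xi)}\,\nu(d\xi)$ of $\nu=\Gamma(p,\omega;2I)$ to the tube $\{u\in S_d:\ u\succ-\tfrac12 I\}$, on which the explicit formula $\det(I+2u)^{-p}e^{-\tr(u(I+2u)^{-1}\omega)}$ is finite, and study its behaviour as $u$ approaches the boundary locus where $I+2u$ degenerates. Writing $\det(I+2u)^{-p}$ for the first factor and $g(u)=e^{-\tr(u(I+2u)^{-1}\omega)}$ for the second, the two factors are of sharply different analytic types: the determinant carries a branch singularity whose order across the full flag of degeneration strata (coranks $1,2,\dots,d$) is governed by $p$, while $g$ develops an essential singularity, of the form $e^{c/\mathrm{dist}}$, precisely along those degenerating directions $e$ with $e^{\top}\omega e>0$. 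The idea is that the classical branch bookkeeping of the determinantal factor across the corank strata reproduces \emph{Gindikin}'s condition $p\in\Lambda_d$ for the central case, and that this obstruction is not repaired by the everywhere-positive factor $g$ (which tends to the constant $e^{-\frac12\tr\omega}$ along $u=tI\to\infty$), giving (a). For (b) one turns to the essential singularities of $g$: a single degenerate direction is harmless — already the one-dimensional non-central gamma of the Introduction has $\widehat\nu(u)\sim e^{c/(1+2u)}$ at $u=-\tfrac12$ — so the constraint must come from \emph{higher-corank} strata, where several directions degenerate at once; since $\nu$ is a positive measure with gamma-type radial exponent $p$, it can sustain only a limited number of simultaneous essential-singular directions, and this number, which equals $\rank(\omega)$ in the relevant stratum, is capped by a quantity controlled by $p$.

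The main obstacle is exactly this last, quantitative, step: relating $\rank(\omega)$ to the admissible strength of the combined branch-plus-essential singularity at a high-corank boundary point, and proving the positivity (complete monotonicity) bound that caps it. Controlling the interaction between the order-$p$ branch singularity of the determinant and the simultaneous essential singularities of $g$ along a corank-$r$ stratum, while keeping track of the congruence orbit of $\omega$, is the technical heart; I would expect precisely this estimate to lose one unit, yielding $\rank(\omega)\le 2p+1$ rather than the sharp $\rank(\omega)\le 2p$ — consistent with the discrete realisation $\omega=\sum_{i=1}^{2p}\mu_i\mu_i^{\top}$ (which forces $\rank(\omega)\le 2p$) and with the fact, recorded after Theorem~\ref{th: maintheorem}, that the sharp bound was obtained only later by Letac and Massam \cite{Letac11} while the present method gives $2p+1$.
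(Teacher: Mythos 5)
Your proposal has a genuine gap, and it sits exactly where the theorem lives. The entire content of the statement is the quantitative bound $\rank(\omega)\le 2p+1$, and your plan leaves precisely this step unproved: you assert that a completely monotone function ``can sustain only a limited number of simultaneous essential-singular directions'' capped by a quantity controlled by $p$, but you give no lemma, no estimate, and no mechanism for why positivity of the measure forces such a cap. This is not a reduction of the theorem to a tractable estimate; it is a restatement of the theorem in the language of boundary singularities. Worse, the heuristic is suspect on its face: the Laplace transform of a positive measure routinely blows up at the boundary of its maximal domain of finiteness (already the one-dimensional non-central gamma has $e^{c/(1+2u)}\to\infty$ as $u\downarrow -\tfrac12$), so the mere presence of several essential-singular directions is not an obstruction per se, and it is unclear what analytic quantity would ``lose one unit'' to produce $2p+1$. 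Part (a) has the same defect in milder form: Gindikin's characterization (Theorem \ref{th: mainproposition}) concerns \emph{central} Wishart laws, and your sketch never actually reduces the non-central case to it; saying the obstruction ``is not repaired by the factor $g$'' is not an argument.

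For contrast, the paper's proof avoids boundary singularity analysis entirely. For $p\in\Lambda_d$ it scales $\omega\mapsto\omega/n$ (legitimized by the exponential-family result, Proposition \ref{proplemma} \ref{proppoint 3a}) and lets $n\to\infty$, so L\'evy's continuity theorem hands the problem to the known central characterization. For the rank bound it argues indirectly: a violating $\Gamma(p_0,\omega_0;\sigma)$ is convolved with a discrete Wishart law $\Gamma(p_1,\omega_1;\sigma)$ (Proposition \ref{lemma: existence wishart dist} \ref{ex wis dist 0}) chosen so that $\omega^*=\omega_0+\omega_1$ has full rank while $p^*=p_0+p_1<\tfrac{d-1}{2}$; then Proposition \ref{proplemma} \ref{proppoint 3b} upgrades this single law to the whole family $\Gamma(p^*,\omega;\sigma)$ over all $(\omega,\sigma)$, which assembles into a Wishart transition function, hence a Wishart process --- contradicting the drift condition $p\ge\tfrac{d-1}{2}$ of Theorem \ref{enigmatic theorem}. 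If you want to salvage an analytic route along your lines, you would have to supply, as a standalone result, the complete-monotonicity estimate you flagged as ``the main obstacle''; as written, the proposal proves neither conclusion.
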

This result contradicts the preceding characterization of Letac and Massam \cite{Letac08},
where no constraint on the non-centrality parameter had been imposed, which we call here rank-condition. Motivated
by \cite{MayerhoferExistenceNonCentral}, Letac and Massam \cite{Letac11} deliver very recently an even stronger result which uses different methods, and fully characterizes the existence and non-existence
of the non-central Wishart family (see Theorem \ref{th: letac11} below).

Theorem \ref{th: maintheorem}  uses very nicely the construction and properties of Wishart processes, but also elementary arguments, such as L\'evy's continuity theorem. The latter allows to conclude, by using
the characterization of central Wishart distributions, that $p\in \Lambda_d$. The proof for the rank condition is indirect; we assume, for a contradiction, the existence of a single Wishart distribution which violates the rank condition. We then use the exponential family of the latter to construct a whole family of Wishart laws, which determine a Wishart process on $\overline S_d^+$. That, in turn, ultimately violates the drift condition of Theorem \ref{enigmatic theorem}.
We start with a few lemmas.

Let $(p,\omega,\sigma)\in \mathbb R_{++}\times \overline S_d^+\times S_d^+$ such that
$\mu:=\Gamma(p,\omega;\sigma)$ is a probability measure, that is,
eq.~\eqref{FLT Mayerhofer Wishart} holds. The domain of its
moment generating function is defined as
\begin{equation*}
D(\mu):=\{u\in S_d\mid \mathcal L_\mu(u):=\int_{\overline S_d^+}e^{-\langle
u,\xi\rangle}\mu(d\xi)<\infty\},
\end{equation*}
which is the maximal domain to which the Laplace transform, originally defined for $u\in \overline S_d^+$ only, can be extended.
It is well known that $D(\mu)$ is a convex (hence connected) set, and we also know that
$\overline S_d^+\subset D(\mu)$. Clearly $(I+\sigma u)$ is invertible if and only if the (symmetric) matrix
$(I+\sqrt{\sigma} u \sqrt{\sigma})$ is non-degenerate. Using these facts and the defining equation
\eqref{FLT Mayerhofer Wishart} we infer that
\begin{equation}\label{equalityofsets}
D(\mu):=\{u\in S_d\mid (I+\sqrt{\sigma}
u\sqrt{\sigma})\in S_d^+\}=-\sigma^{-1}+S_d^+,
\end{equation}
and therefore $D(\mu)$ is even open. Accordingly, the natural exponential family of
$\mu$ is the family of probability measures\footnote{In order to avoid confusions with calculations in the proof of the upcoming proposition, we change here from $u$ notation to $v$, because
$u$ denotes the Fourier-Laplace variable in this paper.}
\[
F(\mu)=\left.\left\{\frac{\exp(v\xi) \mu(d\xi)}{\mathcal L_\mu(v)}\right| v\in -\sigma^{-1}+S_d^+\right\}.
\]
We start by stating some key properties of Wishart distributions \footnote{Some related properties can be found in
Letac and Massam \cite{Letac08}, but in a different notation. Letac and Massam use instead of
 $\Gamma(p,\omega;\sigma)$ the parameterized family $\gamma(p,a;\sigma)$, where
$\omega$ is replaced by $a:=\sigma^{-1}\omega \sigma^{-1}$. Accordingly
\eqref{FLT Mayerhofer Wishart} can be written in the form
\begin{equation}\label{FLT Letac}
\mathcal L (\gamma(p,a;\sigma))(u)= \left(\det(I+\sigma u)\right)^{-p}e^{-\tr(u(I+\sigma
u)^{-1}\sigma a\sigma)},\quad u\in \overline S_d^+.
\end{equation} Note that this requires $\sigma$ to be invertible.}:
\begin{proposition}\label{proplemma}
\begin{enumerate}
\item \label{firstelemprop} Let $p\geq 0,\,\omega\in \overline S_d^+$. Suppose $X$ is an $\overline S_d^+$-valued random variable distributed according to $\Gamma(p,\omega; I)$. Let $q\in \overline S_d^+$ and set $\sigma:=q^2$.
Then $q X q \sim \Gamma(p,q\omega q;\sigma)$\footnote{Expressed in geometric language, we say that the pushfoward of $\Gamma(p,\omega; I)$ under the map
$\xi\mapsto q \xi q$ equals $\Gamma(p,q\omega q;\sigma)$}. In particular, $\Gamma(p,\omega; I)$ exists if and only if $\Gamma(p, q \omega q;\sigma)$ exists.
\item  \label{secondelemprop} Let $p\geq 0,\, \sigma \in S_d^+$ and $\omega\in \overline S_d^+$ such that $\mu:=\Gamma(p,\omega;I)$ is a probability measure.
For $v=\sigma^{-1}-I$ we have that
\begin{equation}\label{claim 2}
\frac{\exp(v\xi) \mu(d\xi)}{\mathcal L_\mu(v)}\sim \Gamma(p, \sigma\omega\sigma;\sigma).
\end{equation}
Conversely, if  $\Gamma(p, \sigma\omega\sigma;\sigma)$ is a well defined probability measure, so is
$\mu$, and \eqref{claim 2} holds. In particular, we have that the exponential family generated by $\mu$ is a Wishart family and equals
\[
F(\mu)= \{\Gamma(p,\sigma\omega\sigma, \sigma)\mid \sigma\in S_d^+,\quad \sigma^{-1}-I\in D(\mu)\}.
\]
\item \label{proppoint 3} Suppose that $\Gamma(p,\omega_0;\sigma_0)$ is a probability measure, for $p\geq 0$ and $\omega_0,\sigma_0\in S_d^+$. Then we have:
\begin{enumerate} \item \label{proppoint 3a} $\Gamma(p,t \omega_0;\sigma_0)$ is a probability measure for each $t>0$.
\item \label{proppoint 3b} If, in addition, $\omega_0$ is invertible, then $\Gamma(p, \omega;\sigma)$ is a probability measure for each $\omega\in \overline S_d^+$, $\sigma\in \overline S_d^+$.
\end{enumerate}
\end{enumerate}
\end{proposition}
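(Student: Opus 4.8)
The plan is to reduce all three parts to the defining Laplace transform \eqref{FLT Mayerhofer Wishart} together with two elementary matrix facts used repeatedly: $\det(I+AB)=\det(I+BA)$, and that a resolvent $(I+M)^{-1}$ commutes with $M$. For \ref{firstelemprop} I would compute the Laplace transform of the pushforward directly. If $X\sim\Gamma(p,\omega;I)$, then by cyclicity of the trace $\mathbb E[e^{-\tr(u\,qXq)}]=\mathbb E[e^{-\tr((quq)X)}]=\det(I+quq)^{-p}e^{-\tr(quq(I+quq)^{-1}\omega)}$ for $u\in\overline S_d^+$, and it remains to match this with $\mathcal L(\Gamma(p,q\omega q;q^2))$. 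The determinant matches because $\det(I+quq)=\det(I+q^2u)$. For the exponent I would use the conjugation identities $(I+quq)^{-1}q=q(I+uq^2)^{-1}$ and $(I+uq^2)^{-1}u=u(I+q^2u)^{-1}$ (each verified by multiplying out on the left) together with the commutation of $quq$ with its resolvent; this rewrites the exponent as $-\tr(u(I+q^2u)^{-1}q\omega q)$, exactly as needed. The forward implication of the existence claim is then immediate (a pushforward of a probability measure is one), and the converse follows because $\xi\mapsto q\xi q$ is a homeomorphism of $\overline S_d^+$ when $q$ is invertible, so the correspondence is reversible.

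For \ref{secondelemprop} I would use that the element of $F(\mu)$ indexed by $v$ has Laplace transform $u\mapsto\mathcal L_\mu(u+v)/\mathcal L_\mu(v)$, where $\mu=\Gamma(p,\omega;I)$. With $v=\sigma^{-1}-I$, so that $I+v=\sigma^{-1}$, the determinantal factor collapses to $\det(\sigma(\sigma^{-1}+u))^{-p}=\det(I+\sigma u)^{-p}$, while writing $(\sigma^{-1}+u)^{-1}=(I+\sigma u)^{-1}\sigma$ and again using that $(I+\sigma u)^{-1}$ commutes with $\sigma u$ reduces the exponent to $-\tr(u(I+\sigma u)^{-1}\sigma\omega\sigma)$. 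This is precisely $\mathcal L(\Gamma(p,\sigma\omega\sigma;\sigma))$, establishing \eqref{claim 2}. For the converse I would exploit that a measure and its exponential tilt are mutually absolutely continuous: since $-v=I-\sigma^{-1}$ lies in $D(\Gamma(p,\sigma\omega\sigma;\sigma))=-\sigma^{-1}+S_d^+$ (because $I\in S_d^+$), untilting $\Gamma(p,\sigma\omega\sigma;\sigma)$ produces a finite measure whose normalization is $\mu$, and whose Laplace transform computes back to that of $\Gamma(p,\omega;I)$. The formula for $F(\mu)$ is then the reparametrization $v\leftrightarrow\sigma$ via $v=\sigma^{-1}-I$, noting $v\in D(\mu)=-I+S_d^+$ iff $\sigma\in S_d^+$ by \eqref{equalityofsets}.

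For \ref{proppoint 3} I would first invoke \ref{firstelemprop} (congruence by $q=\sigma_0^{1/2}$) to reduce to the base measure $\mu=\Gamma(p,\tilde\omega_0;I)$ with $\tilde\omega_0=\sigma_0^{-1/2}\omega_0\sigma_0^{-1/2}$. The engine is the observation that $\Gamma(p,s\tilde\omega_0 s;I)$ is a probability measure for \emph{every} $s\in S_d^+$: by \ref{secondelemprop} each $\Gamma(p,\sigma\tilde\omega_0\sigma;\sigma)$ lies in the exponential family of $\mu$ (the constraint $\sigma^{-1}-I\in D(\mu)=-I+S_d^+$ holds for all $\sigma\in S_d^+$), and \ref{firstelemprop} with $q=\sigma^{1/2}$ transports it to $\Gamma(p,\sigma^{1/2}\tilde\omega_0\sigma^{1/2};I)$; one then sets $s=\sigma^{1/2}$. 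For \ref{proppoint 3a} I take $s=\sqrt{t}\,I$ to obtain $\Gamma(p,t\tilde\omega_0;I)$ and transport back by $\sigma_0^{1/2}$ to reach $\Gamma(p,t\omega_0;\sigma_0)$. For \ref{proppoint 3b}, invertibility of $\omega_0$ (hence of $\tilde\omega_0$) makes $s\mapsto s\tilde\omega_0 s$ surjective onto $S_d^+$, since $s\tilde\omega_0 s=\omega'$ is solved by $s=\tilde\omega_0^{-1/2}(\tilde\omega_0^{1/2}\omega'\tilde\omega_0^{1/2})^{1/2}\tilde\omega_0^{-1/2}\in S_d^+$; thus $\Gamma(p,\omega';I)$ exists for all $\omega'\in S_d^+$. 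A Lévy continuity argument ($\omega'+\varepsilon I\downarrow\omega'$, Laplace transforms converging to a limit continuous at $u=0$) extends this to all $\omega'\in\overline S_d^+$. A final congruence by $\sigma^{1/2}$ realizes every $\Gamma(p,\omega;\sigma)$ with $\sigma$ invertible, and one more Lévy continuity step ($\sigma+\varepsilon I\downarrow\sigma$) covers degenerate $\sigma$.

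I expect the routine-but-delicate work to be the resolvent bookkeeping in \ref{firstelemprop} and \ref{secondelemprop}, where the order of factors and the commutation of resolvents with their matrices must be tracked carefully. The genuine conceptual hurdle is \ref{proppoint 3b}: recognizing that the exponential family together with congruences already produces every positive-definite noncentrality through the solvability of $s\tilde\omega_0 s=\omega'$ --- this is exactly where invertibility of $\omega_0$ enters --- after which the degenerate parameters are reached not by an algebraic identity but by a separate limiting argument via Lévy's continuity theorem.
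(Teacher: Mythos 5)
Your proposal is correct and follows essentially the same route as the paper: parts \ref{firstelemprop} and \ref{secondelemprop} by direct Laplace-transform computations (cyclicity of the trace, $\det(I+AB)=\det(I+BA)$, and resolvent intertwining), and part \ref{proppoint 3} by iterating \ref{firstelemprop} and \ref{secondelemprop} to move the non-centrality parameter around, with L\'evy continuity handling degenerate parameters. Your only deviations are cosmetic refinements --- intertwining identities that avoid the paper's insertion of $q^{-1}q$ (so degenerate $q$ is handled cleanly), an explicit solution of $s\tilde\omega_0 s=\omega'$ where the paper merely asserts a suitable $q_1$ exists, and an explicit limiting step for degenerate $\omega$ --- none of which changes the structure of the argument.
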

\begin{proof}
Let $\mathbb E$ be the corresponding expectation operator. By repeated use of the cyclic property of the trace and by the product formula for the determinant, we have
\begin{align*}
\mathbb E[e^{-\langle u, q X q\rangle}]&=\mathbb E[e^{-\langle q u q, X\rangle}]=\det(I+ quq)^{-1}\exp(-\tr( quq(I+quq)^{-1}\omega))\\
&=\det(I+ \sigma u)^{-1}\exp(-\tr( uq(I+quq)^{-1}q^{-1} q \omega q))\\&=\det(I+ \sigma u)^{-1}\exp(-\tr( u (I+\sigma u)^{-1} q \omega q)),
\end{align*}
which proves assertion \ref{firstelemprop}. Next we show \ref{secondelemprop}. We note first, that by
\eqref{equalityofsets} we have that $v=\sigma^{-1}-1\in D(\mu)$. Hence exponential tilting is admissible. Furthermore, we have
\begin{equation}\label{eq: prop FLT}
\int _{\overline S_d^+}e^{-\langle u+v,\xi\rangle}\Gamma(p,\omega; I)(d\xi)=\det(I+(u+v))^{-p}\exp(-\tr((u+v)(I+u+v)^{-1}\omega)),
\end{equation}
and setting $v=\sigma^{-1}-I$ we obtain
\[
I+u+v=\sigma^{-1}(I+\sigma u).
\]
Hence the first factor on the right side of eq.~\eqref{eq: prop FLT} is proportional to $\det(I+\sigma u)^{-p}$. It remains to show
that
\begin{equation}\label{eq: what we would like to have}
-\tr((u+v)(I+u+v)^{-1}\omega)=c+\tr(u(I+\sigma u)^{-1} \sigma\omega\sigma)
\end{equation}
for some real constant $c$, because then the right side
of \eqref{eq: prop FLT} is proportional to the Laplace transform of $\Gamma(p,\sigma\omega\sigma;\sigma)$. To this end, we do some elementary algebraic manipulations:
\begin{align*}
-(u+v)(I+u+v)^{-1}\omega&=-(u-I+\sigma^{-1})[\sigma^{-1}(I+\sigma u)]^{-1}  \omega\\&=-(-I+\sigma^{-1}+u)(\sigma^{-1}+u)^{-1}\omega\\&=-\omega+(\sigma-\sigma)\omega+(\sigma^{-1}+u)^{-1}\omega\\&=(\sigma-I)\omega-\sigma(\sigma^{-1}+u)(\sigma^{-1}+u)^{-1}\omega+(\sigma^{-1}+u)^{-1}\omega\\&=(\sigma-I)\omega-\sigma u (\sigma^{-1}+u)^{-1}\omega\\&=(\sigma-I)\omega-\sigma u (I+\sigma u)^{-1}\sigma\omega.
\end{align*}
We set now $c:=\tr((\sigma-I)\omega)$ which is the real number we talked about before. Taking trace and performing cyclic permutation inside,
we obtain \eqref{eq: what we would like to have}, and therefore the idendity \eqref{claim 2} is shown. The assertion concerning the exponential family follows by the very definition of the latter.

We may therefore proceed to \ref{proppoint 3} which is proved by
repeatedly applying \ref{firstelemprop} and \ref{secondelemprop}: Let
$\Gamma(p,\omega_0;\sigma_0)$ be a probability measure. Then by
\ref{secondelemprop}, also $\Gamma(p,\sigma_0^{-1}\omega_0\sigma_0^{-1}; I)$ is one. Let $q_1$ such that
$q_1^2=\sigma_1\in S_d^+$. We may write
$\Gamma(p,\sigma_0^{-1}\omega_0\sigma_0^{-1};I)=\Gamma(p,q_1^{-1}(q_1 \sigma_0^{-1}\omega_0\sigma_0^{-1}q_1)q_1^{-1};I)$, and by
applying \ref{firstelemprop}, we obtain the pushforward measure
$\Gamma(p,q_1 \sigma_0^{-1}\omega_0\sigma_0^{-1}q_1;\sigma_1)$. By \ref{secondelemprop} we
have that $\Gamma(p, q_1^{-1}\sigma_0^{-1}\omega_0\sigma_0^{-1} q_1^{-1};I)$ is a probability measure
as well, and once again by
 \ref{secondelemprop} we infer that for all $\sigma\in S_d^+$, $\Gamma(p, \sigma q_1^{-1}\sigma_0^{-1}\omega_0\sigma_0^{-1} q_1^{-1}\sigma,\sigma)$
 is a probability.  We use this fact to prove both parts of the assertion. Without loss of generality we assume that
$\sigma$ is non-degenerate, because in the case $\sigma\in\partial \overline S_d^+$ we may invoking L\'evy's continuity theorem\footnote{Strictly speaking, L\'evy's continuity theorem applies to characteristic functions. However, in the Wishart case,
the right side of \eqref{FLT Mayerhofer Wishart} can even be extended to even the Fourier-Laplace transform with ease, and by preserving its functional form.}.
Setting $q_1=1/\sqrt{t} I$ and  $\sigma=\sigma_0$, we see that \ref{proppoint 3a} holds. For $\omega_0\in S_d^+$ we choose $q_1\in
\overline S_d^+$ such that $q_1^{-1} \sigma_0^{-1}\omega_0\sigma_0^{-1} q_1^{-1}=\sigma^{-1}\omega\sigma^{-1}$, which allows to conclude \ref{proppoint 3b}.
\end{proof}
Next, we restate the characterization of the central
Wishart laws by using \cite{PeddadaRichards91}:
\begin{theorem}\label{th: mainproposition}
Let $d\geq 2$, $\sigma \in S_d^+$ and $p\geq 0$. The following
are equivalent:
\begin{enumerate}
\item \label{central1} $\det(I+\sigma u)^{-p}$ is the Laplace transform of a probability measure $\Gamma(p,\omega;\sigma)$ on $\overline S_d^+$.
\item \label{central2} $p\in \Lambda_d$.
\end{enumerate}
\end{theorem}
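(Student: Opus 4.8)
The plan is to prove the two implications separately, observing that \ref{central2}$\Rightarrow$\ref{central1} is a direct consequence of the existence results already established, whereas \ref{central1}$\Rightarrow$\ref{central2} is the genuinely hard, classical necessity part (the Gindikin--Wallach phenomenon). Throughout I would keep in mind that $\det(I+\sigma u)^{-p}$ is precisely the Laplace transform \eqref{FLT Mayerhofer Wishart} with $\omega=0$, so that we are characterizing the \emph{central} family $\Gamma(p,0;\sigma)$.

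For the easy direction \ref{central2}$\Rightarrow$\ref{central1}: if $p\in\Lambda_d$ then either $2p\in\mathbb N$ with $2p\le d-1$, or $p\ge(d-1)/2$. In the first case Proposition~\ref{lemma: existence wishart dist}\ref{ex wis dist 0} applies immediately, since the non-centrality $\omega=0$ trivially satisfies $\rank(\omega)=0\le 2p$; concretely this is the sum-of-squares construction of Section~\ref{sec: intro}. In the continuous range $p\ge(d-1)/2$, existence is Proposition~\ref{lemma: existence wishart dist}\ref{ex wis dist 1}. Together these two clauses exhaust $\Lambda_d$, so \ref{central1} holds.

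For \ref{central1}$\Rightarrow$\ref{central2} I would first reduce to $\sigma=I$: by Proposition~\ref{proplemma}\ref{firstelemprop} with $\omega=0$ and $q=\sqrt\sigma$, the law $\Gamma(p,0;\sigma)$ is the pushforward of $\Gamma(p,0;I)$ under $\xi\mapsto q\xi q$, so one exists iff the other does and we may assume the Laplace transform is $\det(I+u)^{-p}$. Next I would record the semigroup structure: if $X\sim\Gamma(p,0;I)$ and $Y\sim\Gamma(q,0;I)$ are independent, then $X+Y\in\overline S_d^+$ has Laplace transform $\det(I+u)^{-(p+q)}$, so the admissible set $G:=\{p\ge 0\mid \Gamma(p,0;I)\ \text{exists}\}$ is an additive sub-semigroup of $[0,\infty)$. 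It is closed under limits by L\'evy's continuity theorem (a pointwise limit of such Laplace transforms is continuous at $u=0$, hence again a Laplace transform of a probability measure), and it contains $1/2$ (a single Gaussian outer product) together with the half-line $((d-1)/2,\infty)$. This yields $G\supseteq\Lambda_d$ for free, and reduces everything to showing that $G$ contains \emph{no} point of the gaps $(k/2,(k+1)/2)$ for $0\le k\le d-2$.

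The hard part will be exactly this gap-exclusion, and it is the point where the semigroup bookkeeping is insufficient: a single admissible gap value would merely spawn further gap values, without contradicting the semigroup or closedness properties on their own. Here I would invoke the special-function analysis of \cite{PeddadaRichards91}. The mechanism I expect is a rank/positivity dichotomy: if $\Gamma(p,0;I)$ existed for some $p<(d-1)/2$, then the candidate density $\propto(\det\xi)^{p-(d+1)/2}e^{-\tr\xi}$ on $S_d^+$ fails to be integrable once $p<(d-1)/2$ (the Gindikin gamma factor $\Gamma_d(p)$ develops poles), so any such measure must be singular and concentrated on matrices of bounded rank; matching this low-rank support against the prescribed transform $\det(I+u)^{-p}$ then forces $2p$ to equal an integer maximal rank, i.e.\ $2p\in\mathbb N_0$, which combined with $p<(d-1)/2$ pins $p$ to $\{0,1/2,\dots,(d-1)/2\}$. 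The remaining values already lie in $((d-1)/2,\infty)$, so $G\subseteq\Lambda_d$ and the equivalence follows. The delicate, computational heart is precisely the rigorous form of this dichotomy below $(d-1)/2$, carried out through the elementary but intricate special-function estimates of \cite{PeddadaRichards91}.
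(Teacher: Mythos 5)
Your proposal takes essentially the same route as the paper: the paper states this theorem without proof, explicitly as a restatement of the classical Gindikin-set characterization cited from \cite{PeddadaRichards91}, which is precisely where you send the hard implication \ref{central1}$\Rightarrow$\ref{central2} (your sketch of their internal mechanism is speculative, but you correctly treat it as a citation rather than a claim). Your added scaffolding---sufficiency via Proposition~\ref{lemma: existence wishart dist} \ref{ex wis dist 0} and \ref{ex wis dist 1}, the reduction to $\sigma=I$ via Proposition~\ref{proplemma} \ref{firstelemprop}, and the convolution-semigroup closure observations---is correct and goes slightly beyond what the paper records.
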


We are prepared to deliver our proof of Theorem \ref{th: maintheorem}:
\begin{proof}
Let $p>0$ such that for some $\omega_0\in \overline S_d^+$, $\sigma\in S_d^+$, the right
side of \eqref{FLT Mayerhofer Wishart} is the Laplace transform of a
non-trivial probability measure $\Gamma(p,\omega_0;\sigma)$. By
Proposition \ref{proplemma} \ref{proppoint
3a}, we have that $\Gamma(p,\omega_0/n;\sigma)$ is a probability
measure for each $n\in\mathbb N$. Letting $n\rightarrow\infty$ and
invoking L\'evy's continuity theorem, we obtain that
$\Gamma(p;\sigma)$ is a probability measure. But then by the characterization of central Wishart laws,
Theorem \ref{th: mainproposition} \ref{central2}, we have that $p\in\Lambda_d$.

Let now $p_0\in \Lambda_d\setminus [\frac{d-1}{2},\infty)$, and let
us assume, by contradiction, that there exist $(\omega_0,\sigma)\in
\overline S_d^+\times S_d^+$, $\rank(\omega_0)>2p_0+1$ such that
$\Gamma(p_0,\omega_0;\sigma)$ is a probability measure. Pick now
$\omega_1\in \overline S_d^+$ such that $\omega^*:=\omega_1+\omega_0$ has
$\rank(\omega^*):=\rank(\omega_1)+\rank(\omega_0)=d$, and set
$p_1:=\frac{d-\rank(\omega_0)}{2}$. By construction $2p_1=
\rank(\omega_1)$, and
$p_1\in\Lambda_d\setminus[\frac{d-1}{2},\infty)$. Hence Proposition
\ref{lemma: existence wishart dist} \ref{ex wis dist 0} implies the
existence of a non-central Wishart distribution
$\Gamma(p_1,\omega_1,\sigma)$. Note that $p^*:=p_0+p_1\in
\Lambda_d\setminus[\frac{d-1}{2},\infty)$ and that by convolution
\[
\Gamma(p^*,\omega^*,\sigma):=\Gamma(p_0,\omega_0,\sigma)\star \Gamma(p_1,\omega_1,\sigma)
\]
is a probability measure as well. Since $\omega^*$ is of full rank, we have by Proposition \ref{proplemma}
\ref{proppoint 3b} that $\Gamma(p^*,\omega;\sigma)$ is a probability measure for all
$(\omega,\sigma) \in (\overline S_d^+)^2$. Hence $\Gamma(p^*, \omega ;t\sigma)$ exists for all $(t,\omega,\sigma)\in \mathbb R_+\times
(\overline S_d^+)^2$.

We may now construct a Wishart process by picking some $\alpha\in \overline S_d^+\setminus\{0\}$ and declaring a Markovian transition function by setting for each $(t,x)\in\mathbb R_+\times \overline S_d^+$, $p_t(x,d\xi)$ the probability measure
given by the Laplace transform \begin{equation}\label{eq: FLT Bru1} \int_{\overline S_d^+} e^{-\langle
u,\xi\rangle}p_t(x,d\xi)=\left(\det(I+2t\alpha u) \right)^{-p^*} e^{\tr\left(-u\left(I+2t\alpha
u\right)^{-1}\,x\right)}
\end{equation}
(cf.~\eqref{eq: FLT Bru} for $\beta=0$). Hence $X$ is a Wishart process with constant drift parameter $2p^*$, diffusion coefficient $\alpha$ and zero drift $\beta=0$. But $2p^*\not\geq
(d-1)$, which contradicts Theorem  \ref{enigmatic theorem}. This shows that we indeed must have $\rank(\omega_0)\leq 2p_0+1$.
\end{proof}

\section{Existence of Wishart transition densities}
The aim of this section is to fully characterize the existence of transition densities for Wishart processes.
That is, we investigate whether the transition laws of Wishart processes admit a Lebesgue density.
\begin{theorem}\label{Th: maintheorem}
Let $p>\frac{d-1}{2}$. The following are equivalent
\begin{enumerate}
\item \label{item1} $p_t(x,d\xi)$ has a Lebesgue density $F_{t,x}(\xi)$, for one (hence all) $t>0$.
\item \label{item2} The $d\times d^2$ matrix
\begin{equation}\label{Kalmanmatrix}
[Q^\top\mid \beta Q^\top\mid \dots \mid\beta^{d-1}Q^\top]
\end{equation}
has maximal rank.
\end{enumerate}
Furthermore, if any of the above conditions are satisfied, then $F_{t,x}$ is $C^p(\overline S_d^+)$, for any
$t>0$.
\end{theorem}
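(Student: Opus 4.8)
The plan is to reduce the existence of a density entirely to the invertibility of the scale parameter of the transition law, and then to recognize that invertibility as a Kalman-type controllability condition. Recall that $p_t(x,d\xi)=\Gamma(p,\omega_t^\beta(x);\sigma_t^\beta(\alpha))(d\xi)$ with $\alpha=Q^\top Q$. Since we assume $p>\frac{d-1}{2}$, Proposition \ref{lemma: existence wishart dist} \ref{ex wis dist 2} shows that $\Gamma(p,\omega;\sigma)$ has a Lebesgue density as soon as $\sigma\in S_d^+$, while Proposition \ref{lemma: existence wishart dist} \ref{ex wis dist 3} shows that $\Gamma(p,\omega;\sigma)$ is not absolutely continuous whenever $\sigma\in\partial\overline S_d^+$. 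Applied to $\sigma=\sigma_t^\beta(\alpha)$ and $\omega=\omega_t^\beta(x)$, these two facts give the crucial reduction: for fixed $t>0$, condition \ref{item1} holds if and only if $\sigma_t^\beta(\alpha)$ is invertible. Note that the initial state $x$ plays no role here.

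It remains to characterize invertibility of $\sigma_t^\beta(\alpha)=2\int_0^t e^{\beta s}Q^\top Q e^{\beta^\top s}\,ds$. For $v\in\mathbb R^d$ one computes
\[
v^\top\sigma_t^\beta(\alpha)\,v=2\int_0^t \|Q e^{\beta^\top s}v\|^2\,ds,
\]
so $v\in\ker\sigma_t^\beta(\alpha)$ if and only if the analytic curve $s\mapsto Q e^{\beta^\top s}v$ vanishes identically on $[0,t]$. By analyticity this holds if and only if all derivatives at $s=0$ vanish, i.e. $Q(\beta^\top)^k v=0$ for every $k\geq 0$, and by the Cayley--Hamilton theorem it suffices to demand this for $k=0,1,\dots,d-1$. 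Transposing, the latter says exactly that $v^\top[\,Q^\top\mid \beta Q^\top\mid\dots\mid\beta^{d-1}Q^\top\,]=0$. Hence $\ker\sigma_t^\beta(\alpha)$ equals the left kernel of the matrix \eqref{Kalmanmatrix}, so $\sigma_t^\beta(\alpha)$ is positive definite precisely when \eqref{Kalmanmatrix} has full row rank $d$, that is, maximal rank. Since this characterization does not involve $t$, invertibility of $\sigma_t^\beta(\alpha)$ holds for one $t>0$ if and only if it holds for all $t>0$; combined with the previous paragraph this proves \ref{item1}$\Leftrightarrow$\ref{item2} together with the ``for one (hence all) $t>0$'' clause. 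The argument is insensitive to the non-uniqueness of $Q$: replacing $Q$ by $OQ$ with $O$ orthogonal multiplies \eqref{Kalmanmatrix} on the right by an invertible block-diagonal matrix and leaves $\sigma_t^\beta(\alpha)$ unchanged.

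For the regularity statement I would read it off the explicit density. Once \ref{item2} holds, $\sigma:=\sigma_t^\beta(\alpha)$ is invertible, so Proposition \ref{lemma: existence wishart dist} \ref{ex wis dist 2} gives $F_{t,x}$ in the closed form \eqref{densities for wishart dist} with $\omega=\omega_t^\beta(x)$. On the interior $S_d^+$ the hypergeometric series $\sum_{m\geq 0}\sum_{|\kappa|=m} C_\kappa(\sqrt a\,\xi\sqrt a)/(m!\,(p)_\kappa)$ converges to a real-analytic function, and the factors $e^{-\tr(\sigma^{-1}\xi+\sigma a)}$ and $(\det\xi)^{p-\frac{d+1}{2}}$ are real-analytic there; hence $F_{t,x}$ is $C^\infty$ on the interior. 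The global regularity on $\overline S_d^+$ is then governed entirely by the boundary behavior of the determinant power $(\det\xi)^{p-\frac{d+1}{2}}$ extended by the indicator $1_{\overline S_d^+}(\xi)$.

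I expect the boundary regularity to be the main obstacle. The equivalence is essentially a clean Gramian computation, whereas establishing the asserted smoothness class $C^p(\overline S_d^+)$ requires a careful transverse analysis of how $\det\xi$ vanishes along the rank-strata of $\partial\overline S_d^+$ (to first order along the corank-one stratum and to higher order along the deeper strata) and controlling the derivatives of the hypergeometric factor against this vanishing. This is the step where one must pin down the precise order of differentiability, and where I would concentrate the technical effort.
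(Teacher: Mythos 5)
Your proof of the equivalence is correct and is essentially the paper's own argument: the reduction of \ref{item1} to invertibility of $\sigma_t^\beta(\alpha)$ via Proposition \ref{lemma: existence wishart dist} \ref{ex wis dist 2} and \ref{ex wis dist 3} is exactly how the paper argues, and your Gramian/Cayley--Hamilton kernel computation is precisely the content of the paper's Proposition \ref{Kalman} (stated there as a separate, $t$-independent equivalence and proved by the same analyticity-and-differentiation idea, so your inline version is just a streamlined re-derivation of that lemma). The only divergence is the final regularity clause: where you anticipate a delicate analysis of how $(\det\xi)^{p-\frac{d+1}{2}}$ vanishes along the rank strata of $\partial\overline S_d^+$, the paper simply declares the $C^p$ regularity of $F_{t,x}$ to be an immediate consequence of the explicit density formula \eqref{densities for wishart dist}, carrying out no boundary analysis at all; so on that point your proposal is no less complete than the paper's proof, merely more candid about where the technical content of the regularity assertion actually lies.
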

\begin{remark}\rm
\begin{itemize}
\item Note that in \eqref{Kalmanmatrix} the matrix $Q$ may be replaced by any matrix $K$ for which $K^\top K=Q^\top Q=\alpha$. This is obvious from the proof of Proposition \ref{Kalman} below.
\item By an inspection of the (Gaussian) transition law of Ornstein-Uhlenbeck processes of the form
\[
Y_t:=e^{\beta t}\left(y+\int_0^te^{-\beta s}Q^\top dW_s\right),
\]
where $W$ is a $d$--dimensional standard Brownian motion, one can infer the well known result
that \ref{item2} characterizes the existence of Lebesgue densities for $Y_t$. In fact, by
 \ref{Kalman}, the covariance matrix of $Y_t$ is non-degenerate, for each $t>0$, so the result holds because $Y$ is a Gaussian process.
\item Condition \ref{item2} is well known in linear control theory, and characterizes the controllability of the linear system
    \[
    \partial_t x(t)=\beta x(t)+Q^\top u(t),\quad x(0)=x_0.
    \]
    That is, let $T>0$. Then for each $x^*\in\mathbb R^d$ there exists a control $u$ such that $x(T)=x^*$. For more details, see \cite[Chapter 3]{trentel}.
\end{itemize}
\end{remark}

The following proposition is a well known ingredient in the characterization of controllability
of linear systems. For the sake of completeness and as service for the reader,
we also prove it here. See, for instance the statements \cite[3.1 to 3.4]{trentel} and their proofs.
\begin{proposition}\label{Kalman}
The following are equivalent:
\begin{enumerate}
\item \label{ch1} For one (hence any) $t>0$, the matrix $\sigma_t^\beta(\alpha)$
is positive definite.
\item \label{ch2} The $d\times d^2$ matrix \eqref{Kalmanmatrix}
has maximal rank.
\end{enumerate}
\end{proposition}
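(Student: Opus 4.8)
The plan is to reduce the positive definiteness of $\sigma_t^\beta(\alpha)=2\int_0^t e^{\beta s}\alpha e^{\beta^\top s}\,ds$ (where $\alpha=Q^\top Q$) to a condition on the kernel of an associated quadratic form, and then to translate that kernel condition into the rank of the Kalman matrix \eqref{Kalmanmatrix} via real-analyticity and the Cayley--Hamilton theorem. Since $\alpha=Q^\top Q\in\overline S_d^+$, each integrand $e^{\beta s}\alpha e^{\beta^\top s}$ is positive semidefinite, so $\sigma_t^\beta(\alpha)$ is manifestly symmetric and positive semidefinite; hence condition \ref{ch1} fails precisely when its kernel is nontrivial. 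The starting point is the identity, valid for every $v\in\mathbb R^d$,
\[
v^\top \sigma_t^\beta(\alpha)\,v = 2\int_0^t v^\top e^{\beta s}Q^\top Q\, e^{\beta^\top s}v\,ds = 2\int_0^t \|Q e^{\beta^\top s}v\|^2\,ds,
\]
which shows that $v$ lies in the kernel of $\sigma_t^\beta(\alpha)$ if and only if the continuous map $s\mapsto Q e^{\beta^\top s}v$ vanishes identically on $[0,t]$.

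Next I would exploit the real-analyticity of $s\mapsto Q e^{\beta^\top s}v$: vanishing on the whole interval $[0,t]$ is equivalent to the vanishing of all derivatives at $s=0$, that is $Q(\beta^\top)^k v=0$ for every $k\geq 0$. By the Cayley--Hamilton theorem, $(\beta^\top)^k$ for $k\geq d$ is a linear combination of $I,\beta^\top,\dots,(\beta^\top)^{d-1}$, so this infinite family of conditions collapses to the finite one $Q(\beta^\top)^k v=0$ for $k=0,1,\dots,d-1$. Transposing, this reads $v^\top \beta^k Q^\top=0$ for $k=0,\dots,d-1$; that is, $v$ is orthogonal to the columns of every block $\beta^k Q^\top$, which is exactly the statement that $v$ belongs to the left kernel of the $d\times d^2$ matrix \eqref{Kalmanmatrix}.

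Combining the two equivalences, the kernel of $\sigma_t^\beta(\alpha)$ is nontrivial if and only if the Kalman matrix \eqref{Kalmanmatrix} has nontrivial left kernel, i.e. fails to have maximal rank $d$. Taking contrapositives yields the equivalence of \ref{ch1} and \ref{ch2}. Moreover, since the resulting kernel condition $Q(\beta^\top)^k v=0$ does not involve $t$, positive definiteness holds for one $t>0$ exactly when it holds for all $t>0$, which justifies the ``one (hence any)'' phrasing in the statement. The main obstacle I anticipate is the middle step: one must argue carefully that vanishing of the analytic curve $s\mapsto Q e^{\beta^\top s}v$ on an interval forces all its Taylor coefficients at $0$ to vanish, and then invoke Cayley--Hamilton to truncate the ensuing infinite tower of derivative conditions to its first $d$ members; everything surrounding this reduction is routine linear algebra.
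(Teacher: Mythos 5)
Your proof is correct, and it rests on the same two tools as the paper's: the derivative identity $\frac{d^k}{ds^k}\big(Qe^{\beta^\top s}v\big)\big|_{s=0}=Q(\beta^\top)^k v$ and the Cayley--Hamilton theorem. The direction \ref{ch2} $\Rightarrow$ \ref{ch1} is, modulo taking contrapositives, exactly the paper's argument: a kernel vector $v$ of $\sigma_t^\beta(\alpha)$ forces $Qe^{\beta^\top s}v\equiv 0$, and differentiation at $s=0$ kills all the blocks of \eqref{Kalmanmatrix}. Where you genuinely deviate is in \ref{ch1} $\Rightarrow$ \ref{ch2}: the paper argues directly, using Cayley--Hamilton with time-dependent coefficients, $e^{\beta s}=\sum_{j=0}^{d-1}a_j(s)\beta^j$, to expand the Gramian as $\sigma_t^\beta(\alpha)=2\sum_{j,k}g_{jk}\,\beta^j Q^\top Q\,(\beta^\top)^k$ with $g_{jk}=\int_0^t a_j(s)a_k(s)\,ds$ positive semidefinite, and then reads off the rank condition from positivity of this quadratic form; you instead take the contrapositive, feeding a left-kernel vector of \eqref{Kalmanmatrix} through Cayley--Hamilton and the exponential power series to conclude $Qe^{\beta^\top s}v\equiv 0$ and hence $v^\top\sigma_t^\beta(\alpha)v=0$. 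Your organization---characterizing $\ker\sigma_t^\beta(\alpha)$ once and for all as the left kernel of \eqref{Kalmanmatrix}---is arguably cleaner: both directions flow through a single identity, the Gram-matrix bookkeeping is avoided, and the manifest $t$-independence of that kernel makes the ``one (hence any)'' clause immediate, whereas the paper needs a separate monotonicity remark (additivity of the integral) for that point.
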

\begin{proof}
By additivity of the integral, it is clear that if $\sigma_t^\beta(\alpha)$
is positive definite for some $t>0$, it is for all $s\geq t>0$.

By Cayley--Hamilton, for each $t\geq 0$ there exist numbers
$a_j(t)$, $j=1,\dots,d-1$ such that
\begin{equation}
e^{tA}=\sum_{j=1}^{d-1} a_j(t)A^j.
\end{equation}
Hence
\begin{equation}\label{eq: chsimpl}
\int_0^t e^{\beta s}Q^\top Q e^{\beta^\top s}ds=\sum_{j,k}g_{jk}(\beta^j Q^\top)(\beta^k Q^\top)
\end{equation}
with
\[
g_{jk}:=\int_0^t a_j(s)a_k(s)ds,
\]
which by construction yields a positive semidefinite matrix $g:=(g_{jk})_{jk}$.

Proof of \ref{ch1}$\Rightarrow$\ref{ch2}: Since $\sigma_t^\beta(Q^\top Q)$ is positive definite, we have
by using eq.~\eqref{eq: chsimpl} that for each $z\in\mathbb R^d\setminus\{0\}$ it holds
\[
\sum_{j,k}g_{jk}(z^\top \beta^j Q^\top)(z^\top\beta^k Q^\top)>0.
\]
But $g$ is positive semidefinite, hence the vector with $(z^\top \beta^j Q^\top)_{j=1}^d$
must be nonzero. Since $z$ was an arbitrary nonzero element of $\mathbb R^d$, we have proved
the rank condition \ref{ch2}.

For the reverse implication, we proceed by an indirect argument. Suppose, there exists $z\neq 0$
such that for all $t>0$, we have that $z^\top\sigma_t^\beta(Q^\top Q)z=0$. Due to the positivity of the integrand
\[
Qe^{\beta^\top t}z=0
\]
for all $t>0$, or equivalently,
\begin{equation}\label{eq: identity}
w^\top Qe^{\beta^\top t}z=0
\end{equation}
for all $w\in\mathbb R^d$, $t>0$. Since the $j$--th derivative of $e^{\beta t}Q^\top$ at $t=0$ equals
$(-1)^j \beta^j Q^\top$, we have by differentiation of
eq.~\eqref{eq: identity} that $w^\top \beta^j Q^\top z=0$ for all $w$ and therefore \ref{ch2} cannot hold.
\end{proof}
\subsection*{Proof of Theorem \ref{Th: maintheorem}}
\begin{proof}
We start with the implication \ref{item2} $\Rightarrow$ \ref{item1}: By Proposition
\ref{Kalman}, we have that for any $t>0$, the matrix $\sigma_t^\beta(\alpha)$
is positive definite. By assumption we have that $p>\frac{d-1}{2}$, and
comparing the Laplace transform \eqref{eq: FLT Bru} of $p_t(x,d\xi)$ with the right side of
\eqref{FLT Mayerhofer Wishart} we realize that $p_t(x,d\xi)\sim\Gamma(p,\omega^\beta_t(x);\sigma_t^\beta(\alpha))$.
Hence by Proposition \ref{lemma: existence wishart dist} \ref{ex wis dist 2} we have that
$p_t(x,d\xi)$ has a Lebesgue density $F_{t,x}(\xi)$, for each $t>0$.

For the converse direction, we proceed by an indirect argument. Assume, for a contradiction,
that the Kalman matrix \eqref{Kalmanmatrix} has rank strictly smaller than $d$. Then by Proposition \ref{Kalman},
$\sigma_t^\beta(\alpha)$ is degenerate for some $t>0$. But then by Proposition \ref{lemma: existence wishart dist}\ref{ex wis dist 3} $p_t(x,d\xi)\sim \Gamma(p,\omega^\beta_t(x);\sigma_t^\beta(\alpha))$ is not absolutely continuous with respect to the Lebesgue density.

So we have shown the equivalence of \ref{item1} and \ref{item2}. The claim concerning the regularity
of the densities $F_{t,x}(\xi)$ is an immediate consequence of the second part of Proposition \ref{lemma: existence wishart dist} \ref{ex wis dist 2}.
\end{proof}

\subsection*{Hitting the boundary revisited}
As application of this section, we prove the following which is stronger to some extent than the assertion of Lemma \ref{lem definite hit}:
\begin{lemma}\label{stronger lem definite hit}
Let $\beta, Q$ be $d\times d$ matrices, and suppose that the Kalman matrix \eqref{Kalmanmatrix} has maximal rank. When $p<\frac{d-1}{2}$, then for any $x\in S_d^+$ we have for the solution of the Wishart SDE that not only $T_x<\infty$ with positive probability but also the stochastic interval $[0, T_x]$ does not contain a deterministic time interval $[0,T]$, $T>0$.
\end{lemma}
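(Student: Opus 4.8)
The plan is to recast both assertions as the single statement: for every $x\in S_d^+$ and every deterministic $T>0$ one has $\mathbb{P}(T_x<T)>0$. This covers the lemma completely, since $[0,T]\subseteq[0,T_x]$ is the same as $T\le T_x$, so ``$[0,T_x]$ contains no deterministic $[0,T]$'' is exactly the failure of $\{T_x\ge T\}$ to have full probability for every $T>0$; and applying it with any fixed $T$ (say $T=1$) yields $\mathbb{P}(T_x<\infty)>0$. I would therefore argue indirectly, assuming $T_x\ge T$ almost surely for some $T>0$ and deriving a contradiction.

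Under that assumption, since $T_x=\inf\{t>0\mid \det(X_t)=0\}$ and $X$ is $\overline S_d^+$-valued, we have $X_t\in S_d^+$ for all $t\in[0,T)$ almost surely. On this interval $X$ is a genuine solution of the Wishart SDE \eqref{wishart full class}, hence by Lemma \ref{wish sde is wishart semi} a Wishart semimartingale with parameters $(\alpha,p,\beta)$, $\alpha=Q^\top Q$. Fixing any $t\in(0,T)$ and running the exponential-martingale computation from the proof of Lemma \ref{lem wish semi is wish dist} — the process $J_s$ there lives entirely on $[0,t]\subset[0,T)$ and is bounded there — we obtain $X_t\sim\Gamma(p,\omega_t^\beta(x);\sigma_t^\beta(\alpha))$. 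In particular the right-hand side of \eqref{FLT Mayerhofer Wishart} with these parameters is the Laplace transform of an honest probability measure on $\overline S_d^+$.

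Now I would pin down the two rank facts that make this impossible. First, the hypothesis that the Kalman matrix \eqref{Kalmanmatrix} has maximal rank gives, through Proposition \ref{Kalman}, that $\sigma_t^\beta(\alpha)$ is positive definite, hence invertible, for every $t>0$. Second, because $\omega_t^\beta(x)=e^{\beta t}xe^{\beta^\top t}$ is congruent to $x\in S_d^+$ via the invertible matrix $e^{\beta t}$, it is itself positive definite, so $\rank(\omega_t^\beta(x))=d$. With $\sigma:=\sigma_t^\beta(\alpha)$ invertible, Theorem \ref{th: maintheorem} applies to the probability measure $\Gamma(p,\omega_t^\beta(x);\sigma)$ and forces $\rank(\omega_t^\beta(x))\le 2p+1$. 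But $p<\tfrac{d-1}{2}$ gives $2p+1<d=\rank(\omega_t^\beta(x))$, the desired contradiction. Hence no such $T$ exists.

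The only real obstacle is the localisation in the second step: one must be sure the distributional identity of Lemma \ref{lem wish semi is wish dist} still holds when $X$ is merely a solution up to $T_x$ rather than a globally defined semimartingale. This is harmless, since the identity is needed at a single time $t<T\le T_x$ and the relevant bounded martingale $J_s$ is supported on $[0,t]$, so the original argument goes through verbatim. Everything else is bookkeeping; the genuinely decisive observation is that starting in the interior $x\in S_d^+$ makes the non-centrality parameter $\omega_t^\beta(x)$ of full rank $d$, which for $p<\tfrac{d-1}{2}$ is precisely the parameter configuration that Theorem \ref{th: maintheorem} excludes.
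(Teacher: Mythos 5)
Your proof is correct and has the same skeleton as the paper's: assume for contradiction that $T_x\geq T$ almost surely for some deterministic $T>0$, localize Lemma \ref{wish sde is wishart semi} and the bounded-martingale argument of Lemma \ref{lem wish semi is wish dist} to $[0,T]$ to conclude $X_t\sim\Gamma(p,\omega_t^\beta(x);\sigma_t^\beta(\alpha))$ for $t\leq T$, note that $\omega_t^\beta(x)=e^{\beta t}xe^{\beta^\top t}$ has full rank because $x\in S_d^+$ and $e^{\beta t}$ is invertible, and that $\sigma_t^\beta(\alpha)$ has full rank by Proposition \ref{Kalman}, then invoke an impossibility result. The one genuine difference is \emph{which} impossibility result: the paper invokes Letac--Massam's Theorem \ref{th: letac11} (invertible $\sigma$ and $\omega$ force $p\geq\frac{d-1}{2}$); indeed the lemma is deliberately placed just before that theorem, and the proof ends with ``impossible in view of the subsequent statement''. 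You instead invoke the rank condition of Theorem \ref{th: maintheorem}, $\rank(\omega)\leq 2p+1$, together with the arithmetic $2p+1<d$ when $p<\frac{d-1}{2}$. Your variant buys self-containedness: Theorem \ref{th: maintheorem} is proved within the paper, whereas Theorem \ref{th: letac11} is quoted from external work. The only (minor) cost is that Theorem \ref{th: maintheorem} is stated for $p>0$, so the edge case $p=0$, which the lemma allows when $d\geq 2$, is not literally covered by your citation; the paper's route via Theorem \ref{th: letac11} carries no such restriction. This is easily repaired, since the proof of Theorem \ref{th: maintheorem} (take $\omega^*=\omega_0$ of full rank, $p_1=0$, and contradict the drift condition of Theorem \ref{enigmatic theorem}) goes through verbatim for $p_0=0$, but as written your argument should acknowledge it.
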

\begin{proof}
Assume, for a contradiction, the existence of $x\in S_d^+$ for which $T_x\geq T>0$, where $T$ is a positive quantity. An adaption of Lemma \ref{wish sde is wishart semi} shows that
any solution $X_t$ of the Wishart SDE on $[0,T]$ is a Wishart semimartingale on $[0,T]$. And also from  the proof of Lemma \ref{lem wish semi is wish dist} we see that $X_t\sim \Gamma(p, \omega_t^\beta(x);\sigma_t^\beta(\alpha))$ for all $t\leq T$, where $\alpha=Q^\top Q$. But that means that a Wishart distribution exists with
non-centrality parameter of full rank, and--in view of Proposition \ref{Kalman}--also with scale parameter of full rank, but the shape parameter satisfies $p<\frac{d-1}{2}$. This is impossible in view of
the subsequent statement.
\end{proof}
We cite a special case of Letac and Massam's very recent result \cite{Letac11} on necessary conditions for the parameters of the Wishart distributions. Translated into our notation it reads:
\begin{theorem}\label{th: letac11}
Suppose $\sigma$ and $\omega$ are invertible.  $\Gamma(p,\omega;\sigma)$ can only exist, if $p\geq \frac{d-1}{2}$.
\end{theorem}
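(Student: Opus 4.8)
The plan is to obtain this as an essentially immediate consequence of the machinery already assembled, reusing the final step of the proof of Theorem \ref{th: maintheorem}. Assume directly the hypotheses: $\Gamma(p,\omega;\sigma)$ is a (non-trivial) probability measure with both $\sigma\in S_d^+$ and $\omega\in S_d^+$ of full rank. The first step is to propagate existence to all parameter values. Because $\omega$ is invertible, Proposition \ref{proplemma} \ref{proppoint 3b} applies verbatim and yields that $\Gamma(p,\omega';\sigma')$ is a probability measure for \emph{every} pair $(\omega',\sigma')\in(\overline S_d^+)^2$. In other words the shape parameter $p$ is now admissible for arbitrary non-centrality and scale, which is the crucial leverage that full rank of $\omega$ provides.

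Next I would manufacture a Wishart process out of this family. Fix any $\alpha\in\overline S_d^+\setminus\{0\}$ and take $\beta=0$, so that $\omega_t^\beta(x)=x$ and $\sigma_t^\beta(\alpha)=2t\alpha$. Define the candidate transition function $p_t(x,d\xi):=\Gamma(p,x;2t\alpha)(d\xi)$, whose existence for all $t>0$ and all $x\in\overline S_d^+$ is precisely what the previous step guarantees. Its Laplace transform then has the exponentially affine form \eqref{eq: FLT Bru} specialized to $\beta=0$, so by Lemma \ref{wish is mark} the family $(p_t(x,d\xi))_{t\geq 0,\,x\in\overline S_d^+}$ is a Markovian transition function; that is, we obtain a genuine Wishart process with parameters $(p,\beta=0,\alpha)$ and nonvanishing diffusion coefficient.

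The final step is to invoke the drift obstruction coming from the strong maximum principle. Since $\alpha\neq 0$, Theorem \ref{enigmatic theorem} forces $p\geq\frac{d-1}{2}$, which is exactly the asserted conclusion.

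I expect the only delicate point to be bookkeeping rather than a real obstacle. One must check that the hypotheses of Proposition \ref{proplemma} \ref{proppoint 3b} are genuinely met (they are, since both $\sigma$ and $\omega$ are assumed positive definite) and that the constructed $(p_t(x,d\xi))$ really qualifies as a Wishart transition function in the sense of Definition \ref{def wish pro m} with $\alpha\neq 0$, so that Theorem \ref{enigmatic theorem} applies with no degeneracy in the diffusion coefficient. No new estimate or special-function computation is required: the entire force of the statement comes from combining the full-rank propagation of existence with the drift condition.
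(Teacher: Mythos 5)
Your argument is correct, but it cannot be compared against ``the paper's own proof'' for a simple reason: the paper never proves Theorem \ref{th: letac11}. It is quoted verbatim as a special case of Letac and Massam's result \cite{Letac11}, which is obtained there by different methods and is genuinely stronger (it gives the full characterization of existence and the sharper rank bound $\rank(\omega)\leq 2p$, whereas the methods of these notes only reach $\rank(\omega)\leq 2p+1$). What you have shown is that the special case stated here is already within reach of the paper's own machinery, and your chain of reasoning is sound: Proposition \ref{proplemma} \ref{proppoint 3b} applies because both $\sigma$ and $\omega$ are positive definite; the resulting family $p_t(x,d\xi)=\Gamma(p,x;2t\alpha)$, $\beta=0$, $\alpha\neq 0$, is then a well-defined Wishart transition function, Markovian and Feller by Lemma \ref{wish is mark}; and Theorem \ref{enigmatic theorem} forces $p\geq\frac{d-1}{2}$. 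This is precisely the endgame of the paper's proof of Theorem \ref{th: maintheorem}, with the convolution step used there (to boost a possibly low-rank $\omega_0$ to full rank before invoking Proposition \ref{proplemma} \ref{proppoint 3b}) rendered unnecessary by your standing hypothesis that $\omega$ is invertible. In fact, for $p>0$ the statement also follows in one line from Theorem \ref{th: maintheorem} itself, since invertibility of $\omega$ turns the conclusion $\rank(\omega)\leq 2p+1$ into $d\leq 2p+1$, i.e.\ $p\geq \frac{d-1}{2}$; your direct route has the minor additional merit of covering $p=0$, which Theorem \ref{th: maintheorem} excludes by hypothesis. What neither route recovers---and the reason the paper cites \cite{Letac11} rather than arguing as you do---is the stronger content of Letac and Massam's theorem, namely the bound $\rank(\omega)\leq 2p$ and the complete existence/non-existence characterization.
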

\section{Wishart processes on new state spaces}
In a recent work with Cuchiero, Keller-Ressel and Teichmann \cite{CKMT}, the class of affine processes on finite-dimensional symmetric cones
\footnote{These are closed convex selfdual cones on which the linear automorphism group acts transitively} have
been completely characterized. Those also contain affine diffusion processes such as the Wishart processes. Symmetric cones are classified completely \cite{Faraut}, therefore one could try to find SDE realizations as the Wishart SDEs \eqref{wishart full class} on $\overline S_d^+$. However, only in the case of Hermitian matrices the literature provides such realizations. In the latter case we let $W_1, W_2$ be two jointly independent
$d\times n$ Brownian motions $(n\geq d)$, and $y$ be a complex $d\times n$ matrix. Then $X_t:=(y+W_1+iW_2)(\bar y+W_1-iW_2)^\top$ satisfies
\[
dX_t=\sqrt{X_t}dB_t+d\bar B_t^\top \sqrt{X_t}+2p I dt, \quad X_0=y\bar y^\top,
\]
with $B_t$ some $d\times d$ complex Brownian matrix, i.e. $B=B_1+iB_2$, where $B_1, B_2$ are two independent $d\times d$ standard Brownian motions. Here $\bar c$ denotes the complex conjugate of a complex number $c$ and $p=d$. Demni \cite[chapter 2]{Demni2007}  dicusses this case, calling  these processes Laguerre process of integer index. For  general drift parameters $p> d-1$
see the {\bf Laguerre processes} of \cite[chapter 4]{Demni2007}.

\cite{CKMT} delivers for the first time a Wishart process on a non-symmetric cone, namely the dual Vinberg cone. This cone
is given by the five-dimensional subset $K\subset S_3^+$
\[
K=\left\{u=\left(\begin{array}{lll} a & b_1 & b_2\\
b_1& c_1 &0\\
b_2&0&c_2\end{array}\right):\quad u \textrm{  is  positive semi-definite}\right\}
\]
and any element $x\in K$ can be written as
$x=\sum_{i=1}^3 y_iy_i^\top$, where
\[
y_1=\left(\begin{array}{lll} y_{1,1}\\0\\0\end{array}\right),\quad y_2=\left(\begin{array}{lll} y_{2,1}\\y_{2,2}\\0\end{array}\right),\quad y_3= \left(\begin{array}{lll} y_{3,1}\\0\\y_{3,3}\end{array}\right).
\]
We give here a slightly different, yet fully equivalent construction. Let $(B_1,B_2,B_3)$ be a three dimensional standard Brownian motion. We introduce the process $X_t:=\sum_{i=1}^3 Y_{i,t}Y_{i,t}^\top$, where for $i=1,2,3$ we set
\[
Y_{i,t}:=y_i+B_{i,t} e_1
\]
and $e_1=(1,0,0)^\top$ denotes the first canonical basis vector. By Example \ref{wish 1 const} (using $\beta=0,Q=\diag(1,0,0)$ and extending $B_i$ to vector-valued Brownian motions) we know that $X_t$ is a Wishart semimartingale on $S_3^+$, and by construction $X_t$ is supported
on $K$. Hence by the second part of Lemma \ref{wish sde is wishart semi} there exists an enlargement of the original probability space which supports a $3\times 3$ standard Brownian $W$ motion such that
$X_t$ is a weak solution of the Wishart SDE
\[
dX_t=(\sqrt{X_t}dW_t Q+Q^\top dW_t^\top \sqrt X_t)+2p Q^\top Q dt, \quad X_0=x\in K,
\]
where $p=3/2$.

A full understanding of Wishart processes (leave alone general affine processes) on general homogenenous cones is not available at
the date this manuscript is printed.

A final note might be of interest. By using Example \ref{wish 1 const} one obtains Wishart processes with non-convex cone state-space
\[
D_m:=\{u\in \overline S_d^+\mid \rank(u)\leq m\}
\]
and these have drift parameter $p=m/2$, possibly smaller then $(d-1)/2$, thus violating the enigmatic drift condition established in Theorem
\ref{enigmatic theorem} which Wishart processes with support on $\overline S_d^+$ must satisfy. Hence
there is no way to extend the so constructed affine processes on $D_m$
to its convex hull  $\overline S_d^+$.  This also shows that there are more Wishart semimartingales on $\overline S_d^+$
than those which naturally arise from Wishart processes on $\overline S_d^+$. But these are supported
on the strict submanifolds $D_m$ of $\overline S_d^+$, $m<d$.

\section*{Acknowledgement}
I would like to thank Piotr Graczyk and Gerard Letac for invitation to lecture at the CIMPA workshop in Hammamet in September 2011 and for encouraging to write these notes. I am grateful for valuable suggestions for the final version of these notes  from Christa Cuchiero. The author is Marie-Curie Fellow at Deutsche Bundesbank and acknowledges financial support from the European Commission FP7 programme (grant PITN-GA-2009-237984) as well as the
Vienna Science and Technology Fund.

\end{document}